\documentclass[11pt,reqno]{amsart} %reqno for right equation number 
\usepackage[a4paper,margin=3cm]{geometry}     % very customizable margins. Under some (rare) circumstances, should be loaded after hyperref
\usepackage[babel]{microtype} % many good lay-out/justification effects, see:
\usepackage[dvipsnames]{xcolor} % Colors support
\usepackage[english]{babel}               % [french, frenchb, english, ]
  \usepackage{amsmath}      % loads amstext, amsbsy, amsopn but not amssymb
 \usepackage{amssymb}      % may be redundant with amsmath
\usepackage{amsthm}
\usepackage[bb=boondox]{mathalpha}%Omit bb=boondox for default libertinus bb 
\theoremstyle{plain}
\newtheorem{theorem}{Theorem}[section]
\newtheorem{proposition}[theorem]{Proposition}
\newtheorem*{proposition*}{Proposition}
\newtheorem*{theorem*}{Theorem}
\newtheorem{lemma}[theorem]{Lemma}
\newtheorem*{lemma*}{Lemma}

\theoremstyle{definition}

\newtheorem{remark}[theorem]{Remark}
\newtheorem{question}{Question}
\usepackage{mathrsfs}      % for the command \mathscr{}
\usepackage{stmaryrd} 
\usepackage{tikz} 
\usetikzlibrary{decorations.pathreplacing}
\usepackage{enumerate}    
\usepackage{mathtools} 
\usepackage{textcomp}  % provides additional text symbols, including currency signs, trademark symbols, copyright symbols, and other special characters for use in documents.
\mathtoolsset{showonlyrefs} %hides equation numbers for all equations except those that are referenced in the document. 
\numberwithin{equation}{section}  % reset equation counters at start of each "section" and prefix numbers by section number
\usepackage[T1]{fontenc}    % fontenc is oriented to output, that is, what fonts to use for printing characters. 
 \usepackage{libertinus}
\usepackage[utf8]{inputenc} % inputenc allows the user to input accented characters directly from the keyboard; 
\usepackage[symbol]{footmisc}
 \usepackage{comment}      % provide new {comment} environment: all text inside the environment is ignored.
 \usepackage{appendix}
\usepackage{graphicx}  
\usepackage{float}                      % Improved interface for floating objects ; add [H] option

\usepackage[
breaklinks   = true,        % so long urls are correctly broken across lines
pagebackref  = true,     % add page number in bibliography and link to position in document where cited
]{hyperref}

\hypersetup{
	colorlinks=true,
	pdfpagemode=UseNone,
    citecolor=OliveGreen,
    linkcolor=blue,
    urlcolor=black,
	pdfstartview=FitW
}
\def\E{\mathbf{E}}
\def\P{\mathbf{P}} 

\def\Var{\mathbf{Var}}
 
\def\T{\mathcal{T}}
\def\L{\mathcal{L}}

\newcommand{\ind}[1]{\mathbf{1}_{\{ #1 \}}}    %Indicator function 

\newcommand*{\dif}{\ensuremath{\mathop{}\!\mathrm{d}}}

\renewcommand{\hat}[1]{\widehat{#1}}

\newenvironment{TBD}{\color{gray}}{\ignorespacesafterend}

\title{Atypical Decay Rates for Atypical Heights\\in Random Recursive Trees} 

 \author{Xinxin Chen
 \and Heng Ma} 
 \address[Xinxin Chen]
 {Beijing Normal University, School of Mathematical Sciences, China   }
 \email{xinxin.chen(at)bnu(dot)edu(dot)cn}

\address[Heng Ma]
{Faculty of Data and Decision Sciences, Technion - Israel's Institute of Technology, Haifa, 32000, Israel.}
\email{hengmamath(at)gmail(dot)com}
\urladdr{\url{https://hengmamath.github.io}}

\date{\today}

\begin{document}
 
\maketitle
\begin{abstract}
  We establish the large deviation probabilities
for the height of random recursive trees,
revealing polynomial upper-tail decay and
stretched-exponential lower-tail decay.
Remarkably, the lower tail features an atypical
prefactor that grows to infinity more slowly than any $n$-fold iterated logarithm.
\end{abstract}

%\tableofcontents 

%\newpage

\section{Introduction}

Random recursive trees (RRTs) $\{ \T_n: n \geq 1\}$, also known as uniform attachment trees,  are a class of growing random trees defined by a simple recursive rule:
\begin{itemize}
  \item The tree $\T_1$ consists of a single root vertex labeled $v_1$. 
  \item Given the tree $\T_n$ on $n$ vertices $V_{[n]} \coloneq \{ v_{1},\dots,v_{n} \}$, $\T_{n+1}$ is obtained by adding a new vertex $v_{n+1}$ and attaching it to a vertex chosen uniformly at random from the $n$ existing vertices in $\T_n$.  
\end{itemize}  
These structures serve as fundamental models in computer science and probability.  
For a comprehensive review of their properties as well as their history, we refer the reader to \cite{SM94} and  \cite{Drmota09}.

\begin{figure}[htbp]
  \centering
  \includegraphics[width=\textwidth]{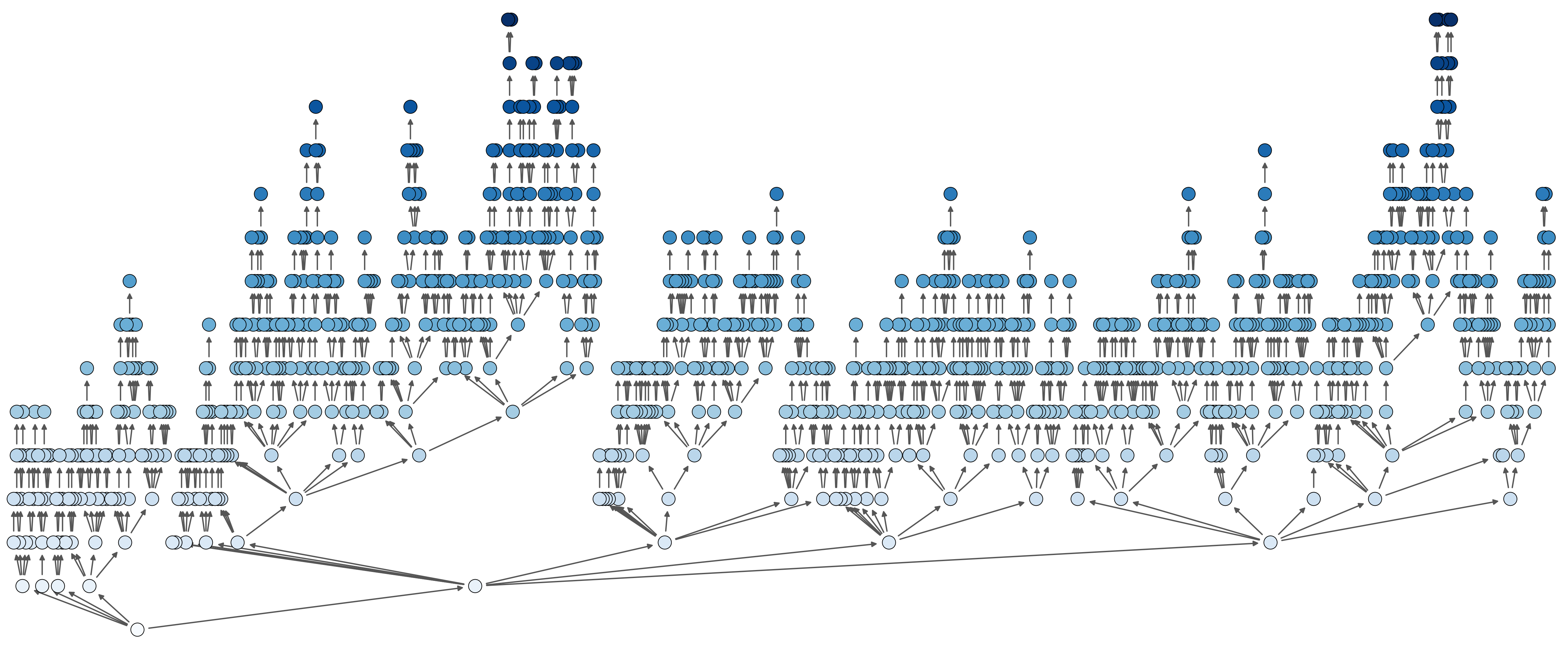}
  \caption{A realization of $\mathcal{T}_{1000}$}
  \label{fig:rrt}
\end{figure}

A fundamental statistic of interest is the height of the tree,   defined as the maximum graph distance    between the root $v_{1}$ and  any other vertex: 
\begin{equation}
    H_{n} = H(\T_{n})  \coloneq  \max_{1 \leq k \leq n } \mathrm{dist}(v_1,v_k) \quad n \geq 1.
\end{equation} 
The asymptotic behavior of $H_{n}$ has been studied extensively.  
The first-order asymptotic for the height $H_n$ was established by  Pittel \cite{Pittel94}, who showed that $H_{n} \sim e\ln n$ almost surely. 
Subsequently, Addario-Berry and Ford \cite{ABF13} provided a much finer analysis of the second- and third-order terms using connections to the minimum of a branching random walk (see also \cite{PS22, PS24} for extensions to weighted recursive trees). Their results are summarized in the following theorem:

\begin{theorem*}[{\cite[Corollary 1.3]{ABF13}}] 
The sequence of centered heights  $(H_{n}-\E[H_{n}])_{n \ge 1}$ is tight. Furthermore, as $n \to \infty$,
 \begin{equation}\label{eq-ABF-EH-bound}
  \E [H_{n} ]=e \ln n-\frac{3}{2} \ln \ln n+O(1).
 \end{equation} 
Moreover,  for any $c^{\prime}<\frac{1}{2 e}$ and all $n \geq 1$, $k \geq 1$, we have
 \begin{equation}\label{eq-ABF-bound}
 \P \left( \left|H_n-\E H_n\right| \geq k\right) \ll_{c^{\prime}} e^{-c^{\prime} k}.
 \end{equation}
\end{theorem*}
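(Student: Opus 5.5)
Following Addario-Berry and Ford, I would go through the standard coupling of $\T_n$ with a continuous-time branching random walk. Run a Yule-type process in which each vertex gives birth at the times of an independent rate-one Poisson process. With $\tau_n$ the time at which the population first reaches $n$, the genealogy at time $\tau_n$ is exactly $\T_n$. Equivalently, assign to each vertex $v$ its birth time $S_v$. The children of a vertex are born at the successive points of a Poisson process, so $(S_v)$ is a branching random walk whose displacement law is the points of a rate-one Poisson process on $(0,\infty)$. Then $H_n$ is the largest generation $k$ having a vertex with $S_v\le\tau_n$. Since $|\{v:S_v\le t\}|$ is a Yule process, $\tau_n=\ln n+O_{\P}(1)$ with exponential tails. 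The height question thus becomes a question about $M_k=\min_{|v|=k}S_v$, the minimal position in generation $k$: we have $H_n\ge k$ iff $M_k\le\tau_n$.

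For the minimum, the log-Laplace transform is $\psi(\theta)=\ln\E\sum e^{-\theta S_v}=-\ln\theta$. Solving $\theta\psi'(\theta)=\psi(\theta)$ gives $\theta^*=1/e$, hence the speed $M_k/k\to1/e$. Aïdékon's and the Addario-Berry–Reed results then give $\E M_k=k/e+\frac{3}{2\theta^*}\ln k+O(1)$, that is, $\E M_k=k/e+\frac{3e}{2}\ln k+O(1)$, with tightness of $M_k-\E M_k$. Inverting at $k\approx e\ln n$, so that $k/e+\frac{3e}{2}\ln k\approx\ln n$, yields $k=e\ln n-\frac{3}{2}e\cdot\frac{1}{e}\cdot\ln\ln n$ after the scaling by $e$. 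This is the expansion \eqref{eq-ABF-EH-bound}, provided we also prove uniform integrability of $H_n-(e\ln n-\frac32\ln\ln n)$. That uniform integrability follows from the tail bound \eqref{eq-ABF-bound}.

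For the tails, note first that if $H_n\ge m+k$ then $M_{m+k}\le\tau_n$. I would use a first-moment bound combined with the ballot-type (Brownian-bridge) estimate under the tilted measure at $\theta^*$. This gives $\P(M_{j}\le j/e+\frac{3e}{2}\ln j -x)\ll (1+x)e^{-x/e}$. Taking $j=m+k$, the deficit is $x\approx k/e$, so the upper deviation is bounded by $(1+k)e^{-k/e^2}$. A crude bound of this form, together with the tail of $\tau_n$, already gives exponential decay. To reach the stated constant $c'<1/(2e)$ we need a sharper comparison, and I would optimize the truncation barrier. For the lower tail $H_n\le m-k$, I would use a second-moment/branching argument. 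Run the tree to time $s$, which gives $\approx e^{s}$ independent subtrees. Each subtree independently attains its typical height with probability bounded below, by the Paley–Zygmund second-moment method on the truncated count. The lower deviation then forces all of these subtrees to fail, giving doubly exponential decay, in particular $e^{-c'k}$.

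The main obstacle is the upper tail with the sharp constant $1/(2e)$. Here one has to control simultaneously the fluctuation of $\tau_n$ (the Yule time has tail $e^{-t}$ in one direction) and the right-tail of $-M_j$. I would try first to condition on $\tau_n$ and use the barrier first-moment estimate uniformly in the time shift. The factor $1/2$ would then arise from optimizing over how the deviation splits between $\tau_n$ and $M_j$. Tightness itself follows immediately from either tail bound.
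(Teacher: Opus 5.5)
The paper does not prove this statement; it quotes it from \cite{ABF13}. Your outline (Yule embedding, the minimum $M_k$ of the associated branching random walk, inversion through $\tau_n$) is the route taken there. The approach is right and the errors below are fixable, but as written the central computation is wrong. With $\psi(\theta)=-\ln\theta$ you have $\theta\psi'(\theta)=-1$, so $\theta\psi'(\theta)=\psi(\theta)$ forces $\ln\theta=1$, i.e.\ $\theta^*=e$, not $1/e$. Equivalently, the speed $\sup_{\theta>0}\ln\theta/\theta=1/e$ is attained at $\theta=e$. Set $m_k=k/e+\frac{3}{2e}\ln k$. The correct input is $\E M_k=m_k+O(1)$, and $m_k=\ln n$ inverts directly to $k=e\ln n-\frac32\ln\ln n+O(1)$. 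With your coefficient $\frac{3e}{2}\ln k$ the same inversion gives $e\ln n-\frac{3e^2}{2}\ln\ln n$. The phrase ``after the scaling by $e$'' is not a computation; it only hides the error. The same slip gives you the wrong tail rate: the barrier (ballot) first-moment bound is $\P(M_j\le m_j-x)\ll(1+x)e^{-ex}$, not $(1+x)e^{-x/e}$.

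With the correct rate, what you call the main obstacle disappears, and you need neither conditioning on $\tau_n$ nor any independence between $\tau_n$ and $(M_j)$. Let $h$ be the largest $j$ with $m_j\le\ln n$, so $m_{h+k}\ge\ln n+(k-1)/e$. For the Yule population $Y_u$ you have $\P(\tau_n>u)=\P(Y_u<n)\le ne^{-u}$, hence
\[
\P(H_n\ge h+k)\le\P(\tau_n\ge\ln n+t)+\P(M_{h+k}\le\ln n+t)\ll e^{-t}+(1+k)e^{-k+et}.
\]
Taking $t=k/(1+e)$ gives every $c'<1/(1+e)$, which already exceeds $1/(2e)$. So $1/(2e)$ is far from sharp (compare Remark~\ref{rmk:label}), and there is no delicate split to optimize.

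Symmetrically, $\P(H_n<h-k)\le\P(\tau_n\le\ln n-t)+\P(M_{h-k}>\ln n-t)$, with $\P(\tau_n\le\ln n-t)\le\exp(-\frac{n-1}{n}e^{t})$. For the second term your time-$s$ argument works once made precise. By memorylessness, the post-$s$ progenies of the $Y_s$ individuals alive at time $s$ are independent copies of the walk started at position $s$. Hence $\P(M_j>m_j+x)\le\E[(1-c)^{Y_s}]\le e^{-s}/c$ with $s=x-C$. Since $Y_s$ is geometric with mean $e^s$, this gives exponential decay, not the doubly exponential decay you claim. That is still enough, and yields any $c'<1/e$. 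Note also that tightness needs both one-sided bounds, not ``either'' one.

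Finally, be aware that all the real work sits in the imported inputs: $\E M_k=m_k+O(1)$, the uniform barrier bound, and $\inf_j\P(M_j\le m_j+C)>0$. The last is your Paley--Zygmund step, which is a truncated second-moment computation. In this walk every vertex has infinitely many children, with displacements that are neither independent nor identically distributed. That is outside the Addario-Berry--Reed setting, so citing it does not cover this case. Supplying these estimates for exactly this walk is the content of \cite{ABF13}. A\"id\'ekon's convergence in law \cite{Aidekon13} alone does not give the $O(1)$ in expectation without uniform tail bounds.
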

  
The deviation probability \eqref{eq-ABF-bound} was also established in \cite{Drmota09a}.
While the exponential concentration of $H_{n}$ around its mean is understood via \eqref{eq-ABF-bound}, the large-deviation behavior of the height $H_n$ remains largely unexplored. 
This leads to the  natural question:

 \begin{question}[Decay Rates]
   Fix $0< \alpha < 1< \beta $.  What are the decay rates of $\P( H_{n} < \alpha e \ln n )$ and    $ (H_{n} > \beta e\ln n)$?
 \end{question}
  
We answer this question and uncover a fundamental asymmetry between the two tails:
  While the upper tail exhibits a standard power-law decay in $n$, the lower tail exhibits a much faster stretched-exponential decay. 

  \begin{theorem}\label{thm:lowerLDP}
    Fix $0< \alpha < 1$. Define 
    \begin{equation}
     \omega_{\alpha}(n)  \coloneq   \frac{ - \ln \P \left(  H_{n} \leq \alpha e \ln n \right) }{  {n^{1- \alpha} }{(\ln n)^{-\frac{3}{2e}} }  }  .
    \end{equation} 
    Then we have 
    \begin{equation}
       \lim_{n \to \infty}  \omega_{\alpha} (n)  = \infty .
    \end{equation}
   Moreover, let $\ln^{(k)}$ denote the $k$-fold iteration of the logarithm function. Then, for every fixed $k \in \mathbb{N}$,
     \begin{equation}
      \lim_{n \to \infty} \frac{ \omega_{\alpha} (n) }{ \ln^{(k)} (n)} = 0.
    \end{equation} 
  \end{theorem}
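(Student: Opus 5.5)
Both bounds rest on the same decomposition. We choose a level $m$ (or a time $N$) at which the tree is split into many nearly independent subtrees. We then require that every one of these subtrees has a height well below its typical height. Take $\T_n$ and some intermediate time $N=N(n)$. Each vertex $v_j$ with $j\le N$ is the root of a subtree whose size at time $n$ is roughly $n/N\cdot\xi_j$, where the $\xi_j$ are asymptotically i.i.d.\ Exp(1) (Pólya urn / Yule embedding). Conditional on $\T_N$ and on the subtree sizes, these subtrees are independent RRTs. Let $D_j$ be the depth of $v_j$ in $\T_N$. The event $\{H_n\le \alpha e\ln n\}$ then requires
\begin{equation}
D_j+H(\text{subtree of }v_j)\le \alpha e\ln n \quad\text{for all } j\le N .
\end{equation}
Since a typical subtree has height $e\ln(n/N)-\frac32\ln\ln(n/N)+O(1)$, the natural choice is $N\approx n^{1-\alpha}$. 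With this choice the subtrees have typical height about $\alpha e\ln n$, and the constraint is nontrivial. The factor $(\ln n)^{-3/(2e)}$ in the rate should come from the $-\frac32\ln\ln n$ correction in \eqref{eq-ABF-EH-bound}. Requiring a height deficit of $\frac32\ln\ln n$ in a tree of size $n/N$ corresponds to rescaling $N$ by a factor $(\ln n)^{-3/(2e)}$.

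For the lower bound on $-\ln\P$, i.e.\ $\omega_\alpha(n)\to\infty$, we work in continuous time via the Yule process / branching random walk embedding of Addario-Berry--Ford. Depths are the generations, and birth times are sums of Exp(1) variables, so the height is the maximal generation alive by time $\ln n$. We set $N'=K n^{1-\alpha}(\ln n)^{-3/(2e)}$ with $K$ large. The deterministic first part of the argument runs as follows.
\begin{itemize}
\item[(a)] We show that with probability at least $1-e^{-cN'}$ the tree $\T_n$ contains at least $N'$ vertices at distinct positions whose subtrees evolve independently. Each such vertex has remaining time $t$ satisfying $e\,t-\frac32\ln t+\text{depth}\ge \alpha e\ln n - O(1)$.
\item[(b)] For each such vertex, by the tightness in \eqref{eq-ABF-bound} the probability that its subtree fails to exceed the threshold is at most some $p<1$.
\end{itemize}
Combining (a) and (b) by independence gives $\P\le p^{N'}+e^{-cN'}$, so $-\ln\P\ge cK\,n^{1-\alpha}(\ln n)^{-3/(2e)}$. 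Letting $K\to\infty$ gives $\omega_\alpha\to\infty$. Producing the count in step (a) requires a lower large deviation for the number of particles of the branching random walk near the frontier at an intermediate time. I would obtain it from a second-moment/truncated-first-moment computation that incorporates the ballot-type factor, which yields the $\ln$ correction, together with concentration for the Yule population.

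For the upper bound on $-\ln\P$, i.e.\ $\omega_\alpha(n)\le\ln^{(k)}n$ eventually, we construct an explicit strategy with cost $n^{1-\alpha}(\ln n)^{-3/(2e)}\,L(n)$, where $L$ grows slower than every iterated logarithm. The scheme is iterative. First we force $\T_{N}$ to be shallow at a small scale, which costs only a negligible amount. Then we choose times $n_0<n_1<\dots<n_r=n$ with $r$ of order $\log^*$-type. At each stage we require that every vertex's subtree stays below the linearly interpolated height profile. The probability that a single subtree of size $s$ stays below $e\ln s-\frac32\ln\ln s-x$ decays like $e^{-cx}$ at worst, by tail estimates for the branching random walk minimum. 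To fix the cumulative deficit one must pay, over $N_i$ independent subtrees, a cost $N_i\cdot e^{-c'}$ per unit. Each refinement stage reduces the leftover logarithmic error to its logarithm, and iterating $k$ times yields the $\ln^{(k)}$ bound.

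The main obstacle is this upper bound. It needs a sharp estimate, uniform in the scale, for the probability that an RRT of size $s$ has height at most $e\ln s-\frac32\ln\ln s-x$ for moderately large $x$, namely a lower-tail estimate matching \eqref{eq-ABF-bound}, since otherwise the per-stage costs do not telescope. I would first try to derive this from the barrier estimates of \cite{ABF13}: force the first few generations to be born late and apply a union bound over the subtrees.
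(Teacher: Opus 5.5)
Both halves of your proposal have genuine gaps.

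\textbf{The direction $\omega_\alpha(n)\to\infty$.} Everything rests on step (a), and neither your framework nor your tools can deliver it.

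Identifying $H_n$ with the maximal generation of the Yule process at the deterministic time $\ln n$ is only accurate up to the $O_{\P}(1)$ fluctuation of $\tau_n-\ln n$. At this scale that is fatal. At deterministic time $\ln n$, with probability $n^{-(1-\alpha)}$ the root has no child before time $(1-\alpha)\ln n$. The maximal generation is then at most about $\alpha e\ln n-\frac32\ln\ln n$. So in that model the lower tail is only polynomial, and (a) with failure probability $e^{-cN'}$ is false. Likewise, the Yule population at a fixed time is geometric, not concentrated.

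If you instead measure remaining times up to $\tau_n$, the subtrees are no longer independent. Moreover, second-moment or truncated first-moment computations only bound from below the probability that many good vertices exist. They never give the needed bound $e^{-cN'}$ on the probability that few exist.

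What works is to stay discrete and use the decomposition you wrote first. Take $v_1,\dots,v_m$ with $m=n^{1-\alpha}(\ln n)^{-3/(2e)}$.
\begin{itemize}
\item The subtree sizes are uniform on compositions of $n$ into $m$ parts. The indicators $\{|\T^{(j)}_{m,n}|<\theta n/m\}$ are negatively associated, so a Chernoff bound makes ``more than $m/4$ small subtrees'' cost $e^{-m\mathsf{D}(1/4\|\theta)}$.
\item A Poisson domination of level sizes gives $\P(X_k(m)\ge t)\le e^{-\rho t\ln^{(k)}t}$. Hence having more than $m/4$ roots within depth $k$ is superexponentially unlikely.
\item On the complement, at least $m/2$ conditionally independent RRTs of size $\ge\theta n/m$, hanging at depth $\ge k$, must each have height $\le\alpha e\ln n-k$. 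By tightness each does so with probability at most $\delta(k/2)\to0$. Then let $k\to\infty$ and $\theta\downarrow 0$.
\end{itemize}
Your variant with $K\to\infty$ and $k\gtrsim e\ln K$ also works once (a) is proved this way. The factor $(\ln n)^{-3/(2e)}$ enters only through $\E H$ of the subtrees, not through any ballot factor.

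\textbf{The direction $\omega_\alpha(n)=o(\ln^{(k)}n)$.} Your multi-stage scheme is too unspecified to check, and its one concrete input is false. The probability that an RRT of size $s$ has height at most $e\ln s-\frac32\ln\ln s-x$ is not bounded below by $e^{-cx}$ uniformly in $x$. For $x\asymp\ln s$ such a bound would be polynomial in $s$, contradicting the stretched-exponential decay you are trying to prove. Also, \eqref{eq-ABF-bound} is only an upper bound, so no lower-tail estimate ``matching'' it exists.

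The missing idea is that the factor $\ln^{(k)}$ arises at a single scale, from the cost of a shallow top tree. Take $m=2Kn^{1-\alpha}(\ln n)^{-3/(2e)}$.
\begin{itemize}
\item Force $H(\T_m)\le k$. This costs $\exp(-m\ln^{(k)}m-O_k(m))$, by counting increasing trees of height $\le k$ through nested partitions into blocks of size about $\ln N$, then about $\ln\ln N$, and so on.
\item Force all $m$ subtree sizes to be $\le 2n/m$. Since the sizes form a uniform composition, this costs only $e^{-m}$.
\item Once $e\ln K\gg k$, tightness gives that each subtree independently has height $\le\alpha e\ln n-k$ with probability $\ge 1/e$.
\end{itemize}
So making the top tree shallow is not ``negligible'': it is the entire cost, and no iteration across scales is needed.
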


Crucially, the correction factor $\omega_{\alpha}(n)$ involved in the lower-tail exponent is itself pathological: it diverges to infinity, yet grows more slowly than any iterated logarithm $\ln^{(k)}(n)$.  

Our second result characterizes the upper tail (unusually large height), where the decay follows a power law in $n$:

  \begin{theorem}\label{thm:upTail}
    Fix $\beta > 1$. Define $
J(\beta) \coloneq e \, \beta \ln \beta    $. Then   
     \begin{equation}
      \lim_{n \to \infty}   \frac{ - \ln \P \left(  H_{n} \geq \beta e \ln n \right) }{\ln n}=  J(\beta) .
     \end{equation} 
   \end{theorem}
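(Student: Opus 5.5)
We would prove the two inequalities $\liminf_{n} \frac{-\ln \P(H_n \ge \beta e\ln n)}{\ln n} \ge J(\beta)$ and $\limsup_{n} \frac{-\ln \P(H_n \ge \beta e\ln n)}{\ln n} \le J(\beta)$ separately. Both rest on the standard representation of depths in the RRT. The depth $D_m$ of vertex $v_m$ is distributed as $\sum_{j=1}^{m-1} B_j$, where the $B_j$ are independent Bernoulli$(1/j)$ variables. Equivalently, $\E[z^{D_m}] = \prod_{j=1}^{m-1}\frac{j-1+z}{j} = \frac{\Gamma(m-1+z)}{\Gamma(z)\Gamma(m)}$. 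Moreover, the expected number of vertices among the first $n$ at depth $k$ is $\sum_{m\le n}\P(D_m=k)$, which is the coefficient of $z^k$ in $\prod_{j=1}^{n-1}(1+z/j)$, i.e. asymptotically $\frac{(\ln n)^{k-1}}{(k-1)!}$ up to lower-order factors.

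\textbf{Upper bound on the probability.} Set $k=\lceil \beta e\ln n\rceil$. By a union bound (first moment),
\[
\P(H_n\ge k)\le \E\#\{m\le n: D_m = k\}+\P(\text{some vertex at depth }>k).
\]
Since any vertex at depth $>k$ has an ancestor at depth exactly $k$, the second term is already covered, so it suffices to bound the expected number of depth-$k$ vertices. This expectation is $[z^k]\prod_{j<n}(1+z/j)\le z^{-k}\prod_{j<n}(1+z/j)\le z^{-k}n^{z}e^{O(z)}$ for every $z>0$. Choosing $z=k/\ln n\approx\beta e$ gives
\[
\exp\bigl(\ln n\,(\beta e-\beta e\ln(\beta e))+o(\ln n)\bigr)=n^{-\beta e\ln\beta+o(1)},
\]
which is exactly $n^{-J(\beta)+o(1)}$. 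This is routine Chernoff.

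\textbf{Lower bound on the probability.} A single vertex already suffices: $\P(H_n\ge k)\ge \P(D_n\ge k)$, or more efficiently we can use the maximal vertex. We compute $\P(D_n = k)$ with $D_n=\sum_{j<n}B_j$ via a tilted measure (a local or moderate Cramér estimate). Under the tilt $z=\beta e$, the $B_j$ become Bernoulli$\bigl(z/(j-1+z)\bigr)$ with mean sum $\approx z\ln n = k$ and variance $\Theta(\ln n)$. Hence $\P(D_n=k)\ge z^{-k}\E[z^{D_n}]\cdot \Theta((\ln n)^{-1/2})$. Here $\E[z^{D_n}]=\Gamma(n-1+z)/(\Gamma(z)\Gamma(n))\asymp n^{z-1}$.

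This loses a factor $n^{-1}$ compared with the first-moment count, because it looks at only one vertex. To recover it, we will instead use that the count of depth-$k$ vertices concentrates in the required sense via a second moment restricted to a good event. Alternatively, and more simply, we force the structure: we require the first $n^{\epsilon}$ vertices to form a path. This happens with probability $\prod_{j<n^\epsilon}1/j=e^{-O(n^\epsilon\ln n)}$, which is far too costly, so we drop that idea.

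The cleaner approach is Paley--Zygmund on $N_k=\#\{m\le n: D_m=k\}$. We compute $\E[N_k^2]$ using the fact that two vertices $v_m,v_{m'}$ have a common ancestor whose depth and index can be decomposed. The pair contribution factorizes into a sum over the last common ancestor $v_a$, giving $\E[N_k^2]\lesssim \sum_a (\text{paths through }a)$. We expect $\E[N_k^2]/\E[N_k]^2 \le n^{o(1)}$ once we truncate to paths whose depth profile stays below the tilted linear trajectory. This truncation is the standard branching-random-walk device, as in \cite{ABF13}.

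\textbf{Main obstacle.} The hard step is this second-moment control. Without truncation, $\E[N_k^2]$ is dominated by pairs branching early at anomalously high depth and may exceed $\E[N_k]^2$ polynomially. We would first try restricting to vertices $v_m$ whose ancestral path satisfies $\mathrm{depth}(\text{ancestor with index}\le n^{t})\le \beta e\,t\ln n + (\ln n)^{2/3}$ for all $t\in[0,1]$. This costs only $n^{o(1)}$ in the first moment by a ballot-type estimate for the tilted walk. With it, each branching point $a=n^{t}$ contributes at most $n^{-J(\beta)(1-t)\cdot 2 - J(\beta)t+o(1)}\cdot n^{t}$-type terms, and summing over $t$ gives $n^{-2J(\beta)+o(1)}$. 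This yields $\P(N_k\ge1)\ge n^{-J(\beta)-o(1)}$.
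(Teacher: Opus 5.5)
Your upper bound is correct and is essentially the paper's: $\P(H_n\ge k)\le \E X_k(n)=[z^k]\prod_{j<n}(1+z/j)$ with the Chernoff choice $z=k/\ln n$ is a generating-function version of the paper's union bound combined with the Gamma tail of $S_k$. Your lower-bound plan is also the paper's: a second moment on a count truncated by a barrier along the linear trajectory, with a ballot estimate for the one-point cost. Your barrier points the right way and is essentially the paper's good set $G_\beta(k)$.

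The gap is in the second-moment step, which you yourself call the crux. You state it with the wrong target and the wrong bookkeeping. Write $\widetilde N_k$ for the truncated count. It is integer valued, so
$\E[\widetilde N_k^2]\ge \E[\widetilde N_k]=n^{-J(\beta)+o(1)}$, which is polynomially larger than $(\E \widetilde N_k)^2=n^{-2J(\beta)+o(1)}$.
Hence the bound $\E[\widetilde N_k^2]/(\E\widetilde N_k)^2\le n^{o(1)}$ that you aim for is impossible. If it held, Paley--Zygmund would give $\P(H_n\ge \beta e\ln n)\ge n^{-o(1)}$, contradicting your own upper bound.

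The branch-point heuristic fails in the same way. Pairs whose last common ancestor has index $\approx n^t$ and sits on the barrier contribute in aggregate about $n^{-J(\beta)(2-t)+o(1)}$. Your expression $n^{-2J(\beta)(1-t)-J(\beta)t}\cdot n^t$ carries an extra factor $n^t$. The correct exponent increases in $t$, so the sum is dominated by late branching, $t\to 1$. It equals $n^{-J(\beta)+o(1)}$, not $n^{-2J(\beta)+o(1)}$. Your final conclusion $\P(N_k\ge 1)\ge n^{-J(\beta)-o(1)}$ therefore does not follow from what you wrote.

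The correct target is $\E[\widetilde N_k^2]\le n^{o(1)}\,\E[\widetilde N_k]$. Given that, Cauchy--Schwarz yields $\P(\widetilde N_k\ge 1)\ge (\E\widetilde N_k)^2/\E[\widetilde N_k^2]\ge n^{-J(\beta)-o(1)}$. Proving this target needs a genuine two-point estimate, which your sketch does not supply.

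The paper gets it from the collision bound of Lemma~\ref{2point-estimate}. In its normalization, its $\beta$ is your $\beta e>e$, and it restricts to $x,y\in[n/2,n)$ in $G_\beta(b_n)$.
\begin{itemize}
\item Suppose $y$'s ancestral path first jumps onto $x$'s path after $\ell$ steps. The barrier forces the vertex it jumps from to have index at least $ye^{-\ell/\beta}$.
\item So landing on one of the at most $b_n+1$ vertices of $x$'s path costs at most $(b_n+1)e^{\ell/\beta}/y$.
\item Against $\P(y\in G_\beta(\ell))\lesssim e^{-\ell[1/\beta+\ln\beta-1]}$, the sum over $\ell$ converges precisely because $\beta>e$.
\item Summing over $y$ then gives a cross term $O(\ln n)\,\E[\Sigma_n]$.
\end{itemize}
This gives $\E[\Sigma_n^2]\lesssim (\E\Sigma_n)^2+(\ln n)\,\E\Sigma_n$. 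That inequality is the quantitative form of ``late branching dominates,'' and it yields the lower bound $n^{-J(\beta)}$ up to a polylogarithmic factor.
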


\medskip

   \begin{remark}
    \label{rmk:label}
Theorems \ref{thm:lowerLDP} and \ref{thm:upTail} sharpen, and are consistent with, the concentration bound \eqref{eq-ABF-bound}. Indeed, for any fixed $\lambda>0$, \eqref{eq-ABF-bound} and \eqref{eq-ABF-EH-bound} imply
$
\P\big(H_n\ge(e+\lambda)\ln n\big)
\le n^{-\lambda/(2e)+o(1)}$.
Taking $\beta=1+\lambda/e$ in Theorem~\ref{thm:upTail}, we obtain the sharper asymptotic
$
\P\big(H_n\ge(e+\lambda)\ln n\big)
=n^{-(e+\lambda)\ln(1+\lambda/e)+o(1)}$.
This is consistent with the concentration bound since
\[
(e+\lambda)\ln(1+\lambda/e)>\frac{\lambda}{2e}.
\]
Similarly, \eqref{eq-ABF-bound} gives only a polynomial upper bound for the lower tail, whereas Theorem~\ref{thm:lowerLDP} shows that the lower-tail probability is stretched-exponentially small.
   \end{remark}

   \begin{remark}
Our results can be reinterpreted as enumerative results for increasing trees, since the random recursive tree corresponds to the uniform distribution over increasing trees. Recall that an increasing tree on $n$ nodes is a rooted tree whose nodes are labeled with distinct integers, typically from $\{1,2,\dots,n\}$, such that labels strictly increase along any path directed away from the root. The total number of such increasing trees is $(n-1)!$. Consequently, the number of increasing trees whose height is at most $\alpha e \ln n$ equals
\[
(n-1)! \, \P(H_n \le \alpha e \ln n) = \exp\!\Big( n \ln n - n - \omega_\alpha(n) n^{1-\alpha} (\ln n)^{-\frac{3}{2e}  } +O(\ln n) \Big),
\]
where $1 \ll \omega_{\alpha}(n) \ll \ln^{(k)} n$ as $n \to \infty$ for any fixed $k$.
Similarly,   the number of increasing trees whose height is at least $\beta e \ln n$ equals
\[
(n-1)! \, \P(H_n \ge \beta e \ln n) = \exp\!\Big( n \ln n - n - \Big[\frac{1}{2}+ J(\beta) \Big] \ln n - \Theta(\ln \ln n) \Big),
\]
where  the $\Theta(\ln \ln n)$   term follows from   Proposition~\ref{prop:uppLDP}. \end{remark}

\begin{remark}
There is also a shortest-path interpretation of our results. Let $K_n$ be the complete graph on $n$ vertices, assign i.i.d. exponential weights to its edges, and fix a distinguished root $o$.
Since the edge weights have a continuous distribution, the shortest path from $o$ to every vertex is almost surely unique, and these paths form the shortest-path tree rooted at $o$.
If the vertices are relabeled in the order in which they are discovered by Dijkstra's algorithm, then this shortest-path tree has the same distribution as the RRT $\mathcal{T}_n$. Indeed, conditional on the currently explored cluster containing $k$ vertices, the memoryless property of the exponential distribution implies that the residual passage times along all $k(n-k)$ boundary edges are i.i.d. exponentials. Consequently, the boundary edge through which the next vertex is discovered is uniformly distributed among these edges, and the next vertex attaches to a uniformly chosen previously discovered vertex.
In particular, $H_n$ has the same distribution as the maximal hopcount
\[
 \max_{v\in V(K_n)} \#\{\text{edges in the shortest path from }o\text{ to }v\}.
\]

One may view the vertices as cities, the root $o$ as a central city, and each edge weight as the direct travel time between two cities. The shortest-path tree then records the fastest route from the central city to every other city. Its height records the largest number of legs used by any such fastest route, rather than the largest total travel time. Thus, for every $0<\alpha<1$,
\[
\P\left(\text{the fastest route to every city uses at most }\alpha e\ln n\text{ legs}\right)
=\exp\left(-\omega_\alpha(n)n^{1-\alpha}(\ln n)^{-\frac{3}{2e}}\right). 
\]
where $1 \ll \omega_{\alpha}(n) \ll \ln^{(k)} n$ as $n \to \infty$ for any fixed $k$. Similarly, for every $\beta>1$,
\[
\P\left(\text{the fastest route to some city uses at least }\beta e\ln n\text{ legs}\right)
=n^{-e\beta\ln\beta+o(1)}.
\]
\end{remark}
  
%  \begin{remark}
  
% \heng{A natural  guess of  correct decay rate should be like  
%  \begin{equation}
%    \P \left(  H_{n} \leq \alpha  e \ln n   \right) =  \exp\left(  - [C_{\alpha} +o(1)] \frac{n^{1- \alpha} }{(\ln n)^{3/(2e)} } \right) 
%  \end{equation}
%  But Maybe it is not the case, we should also think about showing that there exists a function $\omega(n) \to \infty$ (but very slow, like the log-height of $n$, defined as $\min \{ k: \ln^{(k)} (n) \le 1 \}$) such that 
%   \begin{equation}
%    \P \left(  H_{n} \leq \alpha  e \ln n   \right) \le   \exp\left(  -  \omega(n)\frac{n^{1- \alpha} }{(\ln n)^{3/(2e)} } \right) 
%  \end{equation}
%  }
% \end{remark}
\medskip
   \subsection{Related work, comments, and questions}  
The random recursive tree (RRT) is a classical model of randomly growing trees
that has been extensively studied since its introduction by Na and
Rapoport \cite{NR70}.  These structures have emerged in a variety of distinct contexts, having been employed to model the spread of epidemics \cite{Moon74}, to reconstruct the stemmatic relationships among preserved copies of ancient manuscripts \cite{NH82}, and to analyze the dynamics of chain letter schemes and pyramid games \cite{GB84}.
Beyond the study of height and depth, the literature has investigated other structural properties of RRTs, including
profiles \cite{FHN06,KMS17,IK18}, asymptotic degree distributions \cite{Janson05}, maximal and
near-maximal degrees \cite{DL95,GS02,ABE18}, subtree size profiles \cite{Fuchs08}, total path length \cite{DF99},
independence numbers \cite{Janson20}, and the root-finding problem \cite{BDL17,ABFKLT24}.
Furthermore, stochastic processes on RRTs have been the subject of intensive
investigation; see e.g. \cite{Baur16,ABDLV22,AL25}.  While the literature cited here is far from exhaustive, it illustrates the breadth of this active and continuously growing field of study.

Various generalizations of RRTs have been  widely studied  in the literature. One notable model is the preferential attachment tree, where  a new incoming vertex samples a parent with probability proportional to the candidate parent's degree.   The binary search tree is another closely related model, which is a sequence of random subtrees of the infinite binary
tree, recursively built by adding new vertices uniformly at random among all possible sites.
Many properties of these
random trees can be analyzed using a unified approach. We refer to
\cite{HJ17} for an excellent survey on this topic.

Connecting recursive trees to Crump-Mode-Jagers (CMJ) branching processes
via continuous-time embedding has been a standard tool
since the   works of Pittel \cite{Pittel94}
and Devroye \cite{Devroye87,Dev98}. 
Consider a Yule process starting with a single ancestor at $t=0$,
where each individual independently gives birth at rate one.
Let $(\tau_{n})_{n \ge 1}$ denote the chronological sequence of birth times,
meaning $\tau_{n} $ is the exact moment the population reaches size $n$.
The sequence of family trees drawn at these discrete moments $\tau_n$
(with chronologically labeled vertices) has the same joint distribution
as the RRT   $\mathcal{T}_{n}$. 
In particular, the height   $H_n$ corresponds to the maximum generation
born by time $\tau_n$. 
Since  $\tau_n-\ln n=O_{\P}(1)$,
analyzing the typical height reduces
to studying the asymptotic first birth time of the $k$-th generation
in a Yule process, which was studied in \cite{Kingman75} and is also subsumed under the general theory of the minimum
of branching random walks \cite{Aidekon13,ABF13}.
 
While this continuous-time embedding is highly efficient for determining
the typical behavior of random trees, it becomes inconvenient
when investigating atypical behavior, such as large deviations. 
The main issue is that the continuous-time model introduces
auxiliary randomness through the exponential birth times.
If we rely on this embedding to study rare  events,
the optimal strategy leading to the rare event would involve controlling this artificial time randomness.
Conditioning on a specific stopping time like
 $\tau_{n}= a \ln n$  changes the distribution of the branching process
and seems to make the analysis   more complicated. 
 Consequently, studying large deviations requires us to  analyze the discrete RRT directly. This loss of a unified framework means we must tackle different random tree structures on a case-by-case basis, which leads to the following question:

   \begin{question}
  Let $\mathcal{T}_n$ be a preferential attachment tree or a binary search tree on $n$ vertices. From   \cite{Pittel94}, it is known that there exists $c_{*}>0$ such that $H(\mathcal{T}_n)/\ln n \to c_{*}$ almost surely.  
  Fix $0< \alpha < 1< \beta $.  What are the decay rates of the probabilities of the rare events  $\{ H_{n} < \alpha c_{*} \ln n \}$ and    $ \{H_{n} > \beta c_{*} \ln n\}$?
  \end{question}

  Although the typical behaviors of quantities associated with RRTs
are well-studied, their large deviation probabilities
remain largely unexplored in the literature.
One closely related result is \cite{BMS09}, where the authors established
the large deviation principle (LDP) for the number
of leaves $L_n$ in an RRT $\mathcal{T}_n$.
Using analytic methods and Dupuis–Ellis-type path arguments, they proved that  $L_n/n$ satisfies an LDP with speed $n$ and a strictly convex rate function $\Lambda^{*}$, which is
given by the Legendre transform of an explicitly defined function $\Lambda$.
Their findings differ significantly from ours.
In our case, the lower large deviation probability
decays doubly exponentially (in $\ln n$), while the
upper large deviation probability decays exponentially (in $\ln n$).
However, this asymmetry between the lower and upper LDPs
is not very surprising: Indeed, based on \cite{ABF13} and \cite{PS22}, one anticipates that although $(H_n - \mathbb{E}[H_n])_{n \ge 1}$ may fail to converge in distribution due to lattice effects, all of its subsequential weak limits correspond to a (randomly shifted) Gumbel distribution. This limiting distribution is characterized by an exponential right tail and a doubly exponential left tail.
Similar phenomena have been observed in other contexts, such as the maximum of the two-dimensional Gaussian free field \cite{ding13} and the maximum of branching random walks with bounded step sizes \cite{CH20}. The underlying intuition is straightforward: achieving an unusually tall tree requires only a single branch to become exceptionally long, which constitutes a local constraint. In contrast, producing an unusually short tree necessitates that all branches remain short, imposing a global constraint and thereby incurring a significantly higher probabilistic cost.

The appearance of the pathological prefactor   $\omega_{\alpha}(n)$  in the lower large deviation rate
is particularly surprising to us.
 As noted previously, this term diverges to infinity, yet its growth is slower than any finite iteration of the logarithm. 
To the best of our knowledge,
such a term appears to be quite novel
within the existing LDP literature.
We do not currently have an explicit
conjecture for its precise growth rate.
However, we suspect that  $\omega_{\alpha}(n)$ is closely related to the following function $\mathtt{h}$ which is defined as follows:
Let $a_{n}=e^{a_{n-1}}$ for $n \ge 2$, with $a_1=1$. Define $\mathtt{h}(n)=\min\{k:  a_{k} \le n \le a_{k+1} \}$. 
It is highly likely that $\omega_{\alpha}(n)$
is comparable to some function of $\mathtt{h}$, like $\omega_{\alpha}(n) \asymp \mathtt{h}(n)^2$ or $\omega_{\alpha}(n) \asymp e^{\mathtt{h}(n)}$. Determining the exact growth rate of  $\omega_{\alpha}(n)$ is left as an interesting open problem
for future investigation.

Finally, large deviation probabilities quantify the cost of an atypical height but do not determine the typical geometry under the corresponding conditioning. The local-versus-global distinction between the two tails suggests that the conditioned trees should have substantially different structures. Natural observables include their level profiles, degree sequences. Understanding these conditioned geometries remains an open interesting direction.

  \begin{question}[Conditional Structure]
    What can we say about the statistics of the tree $\T_{n}$ conditionally on  the unusually short event $\{H_{n} < \alpha e\ln n\}$?  
    For instance, what is the asymptotic behavior of the level set $X_k(n)=\sum_{j=1}^{n} \ind{\mathrm{dist}(v_1,v_j)=k}$, under the conditioned probability $\P(  \cdot \mid H_{n} < \alpha e\ln n  )$?
  \end{question} 
 
  \section{Lower Large Deviations}

  This section is devoted to establishing the following two estimates, from which 
Theorem \ref{thm:lowerLDP}   follows directly.

\begin{proposition}\label{prop:lowerLDP}
 For any fixed integer $k \ge 2$ and any   $0< \alpha < 1$ , there exists a constant $C>0$   such that for all sufficiently large $n$,
  \begin{equation}
   \P \left(  H_{n} \leq \alpha e \ln n \right) \ge    \exp\Big(  - C\, {n^{1- \alpha} }{(\ln n)^{-\frac{3}{2e}} } \ln^{(k)} n \Big) .
  \end{equation}  
\end{proposition}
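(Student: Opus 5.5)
We will prove the lower bound by describing an explicit strategy that keeps every branch short, carried out in $k$ nested stages, each costing a factor $\ln^{(j)} n$ less than the previous one. The basic input is the structure of the RRT near the root. Up to time $m$, the tree $\T_m$ has height about $e\ln m$. Each vertex $v$ that is present at time $m$ roots a subtree. Conditional on the sizes, the subtree of $v$ in $\T_n$ is itself an RRT whose size is roughly $(n/m)\cdot \mathrm{Exp}$-like (Pólya urn). If we could force $\T_m$ to have height $h_0$ and every subtree to have height at most $\alpha e\ln n - h_0$, we would be done.

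\textbf{First-stage strategy.} Take $m = n^{1-\alpha}(\ln n)^{-3/(2e)}$, up to a slowly varying factor. Force the first $m$ vertices to attach directly to the root, or more generally to keep depth at most $O(1)$. This costs roughly $\prod_{j\le m}(\text{const}/j)$ if they all attach to the root, which is about $e^{-m\ln m}$. That is too expensive.

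\textbf{A cheaper strategy.} Force only that no vertex among the first $m$ has depth exceeding some threshold. Then each of the $\approx m$ subtrees rooted at generation-$\le d$ vertices grows to size about $n/m$. An RRT of size $N=n/m$ has height at most $e\ln N - \tfrac32\ln\ln N + O(1)$ with probability bounded below. Using the tail \eqref{eq-ABF-bound} and \eqref{eq-ABF-EH-bound}, we have
\[
\alpha e\ln n \approx e\ln(n/m) - \tfrac32\ln\ln n,
\]
which matches $m=n^{1-\alpha}(\ln n)^{-3/(2e)}$. The difficulty is that independence across subtrees gives a cost $p^{m}$ with $p<1$ constant, i.e.\ $e^{-Cm}$. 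We must also make sure the initial segment costs only $O(m\,\ln^{(k)}n)$, not $m\ln m$.

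\textbf{Iteration to reduce the initial cost.} To keep the first $m$ vertices shallow cheaply, apply the same idea recursively at scale $\ln m$ rather than demanding depth $O(1)$. We allow the first $m$ vertices to form a tree of height $h_1 = e\ln m_1 + \dots$, arranged so that the depth budget loss is only $O(\ln^{(2)} n)$ in the exponent. Concretely, within $\T_m$ we insist that the height is at most roughly $\alpha_1 e\ln m$ for a suitable $\alpha_1$. By induction on $k$ this has cost $\exp(-C m^{1-\alpha_1}\cdots)$, which is negligible. The price is that the remaining depth budget for the big subtrees shrinks by a constant amount. Compensating for this shrinkage forces $m$ up by a multiplicative factor.

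Each extra level of nesting replaces a factor like $\ln n$ by $\ln\ln n$, and so on. After $k$ levels the prefactor is $\ln^{(k)} n$. The base case ($k=1$ or $2$) is the crude bound where the first $m'$ vertices all attach to the root, with cost $e^{-m'\ln m'}$, applied at a scale $m'$ small enough that this is $O(m\ln^{(k)} n)$.

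The main obstacle is the bookkeeping of the depth budget across levels. Fluctuations of the subtree sizes (Pólya urn/Dirichlet weights) mean some of the $m$ subtrees get size much larger than $n/m$. We must show that the product over subtrees of $\P(H_{N_i}\le \text{budget})$ is still at least $e^{-Cm}$. We plan to condition on the urn being balanced, with all subtree sizes at most $Cn/m$, and to handle the tail with \eqref{eq-ABF-bound}.
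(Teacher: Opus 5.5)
\paragraph{Where the proposal breaks.} Your overall architecture matches the paper's. You split $\T_n$ into $\T_m$ and the $m$ subtrees $\T^{(j)}_{m,n}$, which are independent of $\T_m$ and are RRTs given their sizes. You pay $e^{-O(m)}$ for a balanced urn and for every subtree being short, with $m\asymp n^{1-\alpha}(\ln n)^{-3/(2e)}$. The gap is in the step that is supposed to make $\T_m$ shallow at cost $O(m\ln^{(k)}n)$.

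If $H(\T_m)=h$, each subtree needs height at most $\alpha e\ln n-h$. By \eqref{eq-ABF-EH-bound} and tightness, a subtree of size $N$ meets this with probability bounded below only if $N\lesssim n^{\alpha}(\ln n)^{3/(2e)}e^{-h/e}$. Hence $m\gtrsim n^{1-\alpha}(\ln n)^{-3/(2e)}e^{h/e}$: every unit of depth spent inside $\T_m$ multiplies the exponent by $e^{1/e}$.

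Your iteration imposes $H(\T_m)\le \alpha_1 e\ln m$, so $h=\Theta(\ln n)$. The remaining budget therefore does not shrink by a constant amount, as you claim. Instead you need $m^{1-\alpha_1}\gtrsim n^{1-\alpha}(\ln n)^{-3/(2e)}$, and the best this yields is $\exp(-n^{(1-\alpha)/(1-\alpha_1)+o(1)})$, which loses the exponent $1-\alpha$ itself. Even a loss $h$ of order $\ln^{(2)}n$, as you also suggest, inflates $m$ by a power of $\ln n$.

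The ``induction on $k$'' is also not well founded. The proposition concerns height equal to a fixed fraction of $e\ln n$, and applying it to $\T_m$ never produces an iterated logarithm. So the assertion that each level of nesting replaces $\ln n$ by $\ln\ln n$ has no mechanism behind it.

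\paragraph{The missing ingredient.} What is needed is a bound for trees of \emph{bounded} height. Take $H(\T_m)\le k$ with $k$ fixed, so the depth loss is $O(1)$. This loss is absorbed by choosing $m=2Kn^{1-\alpha}(\ln n)^{-3/(2e)}$ with $K$ a large constant. Then show $\P(H_m\le k)\ge \exp(-m\ln^{(k)}m-O_k(m))$; since $\ln^{(k)}m\le \ln^{(k)}n$, this gives the proposition.

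The paper obtains this bound by counting increasing trees of height at most $k$. It recursively partitions the followers of each node into blocks of size about $\ln N$, which gives $A_k(m)\ge \exp(m\ln m-m\ln^{(k)}m-C_k m)$, and then divides by $(m-1)!$.

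Your base-case idea (early vertices attach to the root) can also be completed directly with time layers. Ignoring integer parts, set $m_0=m$ and $m_i=m_{i-1}/\ln^{(k-i)}m$ for $1\le i\le k-1$. Require $v_2,\dots,v_{m_{k-1}}$ to attach to $v_1$, and each $v_j$ with $m_i<j\le m_{i-1}$ to attach inside $\{v_1,\dots,v_{m_i}\}$. This forces height at most $k$. Its probability is at least $\exp\bigl(-m_{k-1}\ln m_{k-1}-\sum_{i=1}^{k-1}m_{i-1}\ln(m_{i-1}/m_i)\bigr)\ge \exp(-m\ln^{(k)}m-km)$, because $m_{i-1}\ln(m_{i-1}/m_i)=m_{i-2}\le m$ for $i\ge 2$.

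\paragraph{A secondary gap.} Your balanced-urn step still needs a proof that $\P(\max_j|\T^{(j)}_{m,n}|\le Cn/m)\ge e^{-O(m)}$. The bound \eqref{eq-ABF-bound} says nothing about subtree sizes. The paper instead uses that the size vector is uniform on compositions of $n$ into $m$ positive parts.
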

  
%   The first step is to prove that 
%      \begin{equation}
%     \P \left(  H_{n} \leq \alpha   \ln n \right) =   \exp\left(  - n^{c(\alpha)+o(1)}  \right) 
%  \end{equation} 
%  \begin{TBD}
%   So the second  step is to prove that 
%  \begin{equation}
%     \exp\left(  - C_{-} n^{c(\alpha)} \right) \leq 
% \P \left(  H_{n} \leq \alpha   \ln n \right) \leq    \exp\left(  - C_{+} n^{c(\alpha)}  \right) 
% \end{equation} 
%  The ideal goal  is to prove that 
%  \begin{equation} 
% \P \left(  H_{n} \leq \alpha   \ln n \right) =  \exp\left(  - C_{\alpha} n^{c(\alpha)}  \right) 
% \end{equation}
%  \end{TBD}

\begin{proposition}\label{prop:lowerLDPup}
For any fixed $0< \alpha < 1$, we have   
  \begin{equation}
  \lim_{n \to \infty} \frac{ - \ln  \P \left(  H_{n} \leq \alpha e \ln n \right) }{ n^{1-\alpha} (\ln n)^{-\frac{3}{2e}}  }  = \infty  .
  \end{equation}  
\end{proposition}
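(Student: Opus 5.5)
Throughout, write $h_n \coloneq \lfloor \alpha e \ln n\rfloor$ and $\Phi_\alpha(n)\coloneq n^{1-\alpha}(\ln n)^{-\frac{3}{2e}}$. The plan is a \emph{renormalisation / bootstrap} argument on the quantity
\begin{equation}
 \theta(n) \coloneq \inf_{\alpha \in [\alpha_0,\alpha_1]} \frac{-\ln \P(H_n \le \alpha e \ln n)}{\Phi_\alpha(n)},
\end{equation}
taken uniformly over a compact window $[\alpha_0,\alpha_1]\subset(0,1)$ of exponents. The aim is to show that $\theta$ satisfies a recursive inequality that pushes it up by a definite amount each time $n$ is replaced by a much larger scale. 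This structure is also why the eventual growth is so slow, consistent with Proposition~\ref{prop:lowerLDP}.

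\textbf{Step 1 (base bound).} First prove the crude estimate $\theta(n)\ge c>0$ for large $n$. On $\{H_n\le h_n\}$, consider the times $j\in[n^{\alpha}(\ln n)^{A},n]$. At each step the new vertex must avoid the vertices at depth exactly $h_n$. The conditional probability of avoiding them is $1-X_{h_n}(j)/j$, where $X_{h}(j)$ is the level-$h$ count at time $j$. Hence
\begin{equation}
 \P(H_n\le h_n)\le \E\Big[\prod_j (1-X_{h_n}(j)/j)\Big].
\end{equation}
It then suffices to show that, outside an event of probability $\exp(-C\Phi_\alpha(n))$, one has $X_{h_n}(j)\gtrsim j^{1-\alpha}(\ln j)^{-3/(2e)}$ on most of this range. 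The heuristic is that the deepest level must be populated at roughly the rate dictated by the minimum of the associated branching random walk, which gives the factor $(\ln n)^{-3/(2e)}$ via \eqref{eq-ABF-EH-bound}. To make this rigorous, I would lower-bound $X_{h_n}(j)$ by counting independent subtrees rooted at depth $h_n-e\ln m$ that have size about $m$. Each such subtree independently reaches its typical height with probability bounded below, by the tightness statement in the theorem of \cite{ABF13}.

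\textbf{Step 2 (decomposition at a fixed depth $\ell$).} Condition on $\T_n$ up to depth $\ell$. Given the subtree sizes $(S_v)_{|v|=\ell}$, the subtrees rooted at depth-$\ell$ vertices are independent RRTs of sizes $S_v$. Therefore
\begin{equation}
 \P(H_n\le h)\le \E\Big[\prod_{|v|=\ell}\P\big(H_{S_v}\le h-\ell\big)\Big]
 \le \E\Big[\exp\Big(-\theta_{\min}\sum_{|v|=\ell,\ S_v\ \text{large}} \Phi_{\alpha_v}(S_v)\Big)\Big],
\end{equation}
where $\alpha_v=(h-\ell)/(e\ln S_v)$. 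To evaluate the exponent, write $S_v\approx n\prod_{i\le \ell}U_i$ with i.i.d.\ uniforms, which follows from the P\'olya-urn/stick-breaking description of subtree sizes. A many-to-one computation then gives, to first order,
\begin{equation}
 \E\sum_{|v|=\ell}\Phi_{\alpha_v}(S_v)\approx \Phi_\alpha(n).
\end{equation}
That is, $\Phi_\alpha$ is exactly a fixed point of this one-level recursion; the exponents $1-\alpha$ and $-3/(2e)$ are precisely what make it so.

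\textbf{Step 3 (strict gain, the main obstacle).} The heart of the proof is to extract a strict improvement,
\begin{equation}
 \theta(n)\ge \min_{m\in[n^{\epsilon},n]}\theta(m)+\delta,
\end{equation}
with $\delta>0$ independent of $n$, valid for $n$ beyond a scale $N(\theta)$. I would get this from two sources. The first is the concavity of $s\mapsto s^{1-\alpha}$: the random sizes $S_v$ fluctuate, and the event can only be achieved if the environment at depth $\ell$ is itself atypical. Balancing the cost of making the sizes at depth $\ell$ atypical against the product bound should leave a strict margin $\delta$ by a Jensen/Laplace-type computation. The second is the extra cost of the first $\ell$ levels themselves.

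Iterating this inequality over scales $n\mapsto e^{n^{c}}$-type towers shows $\theta(n)\to\infty$, which is the statement. The delicate point is making the Step 2 approximations uniform in $\alpha_v$. This requires the crude estimates from Step 1 with explicit error terms, so that the small gain $\delta$ is not destroyed by the $(1+o(1))$ errors. I expect this uniformity control to be the hardest part.
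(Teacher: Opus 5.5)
\textbf{There is a genuine gap: the strict gain of Step~3 is asserted rather than derived, and Steps~1--2 cannot produce it.}

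(i) There is no concavity to exploit. All depth-$\ell$ subtrees have the same height budget $h-\ell$, so $S_v^{\alpha_v}=e^{(h-\ell)/e}$. Hence $\Phi_{\alpha_v}(S_v)=e^{-(h-\ell)/e}S_v(\ln S_v)^{-3/(2e)}$ is linear in $S_v$ up to the logarithm, and fluctuations of the $S_v$ give no Jensen gain.

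(ii) The ``fixed point'' claim of Step~2 is wrong already at first order. Since $\sum_{|v|=\ell}S_v\approx n$ and the size-biased $\ln S_v$ is $\ln n-O_{\P}(1)$, one gets $\sum_v\Phi_{\alpha_v}(S_v)\approx e^{\ell/e}\Phi_\alpha(n)$. So the expectation in Step~2 must be dominated by atypical top-level environments, and your bound does not control them. Your $\theta(\cdot)$ only covers $\alpha\in[\alpha_0,\alpha_1]$, so subtrees with $\alpha_v>\alpha_1$ are discarded. Yet one can cheaply force every depth-$\ell$ subtree to have size below $c_\ell n^{\alpha/\alpha_1}$. Let $\mathcal{T}_N$ be a star (probability $1/(N-1)!$) and apply Lemma~\ref{lem:subtree-size-1} with $N\asymp n^{1-\alpha/\alpha_1}$. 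This costs $O(n^{1-\alpha/\alpha_1}\ln n)=o(\Phi_\alpha(n))$. Hence the right-hand side of your Step~2 inequality is at least $\exp(-o(\Phi_\alpha(n)))$, so it cannot even propagate $\theta\ge c$.

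The subtrees that actually carry the cost have size of order $n^{\alpha}(\ln n)^{3/(2e)}$, i.e.\ effective exponent $\alpha_v=1-O(\ln\ln n/\ln n)$. That is outside any fixed window, in the regime where the only available input is tightness of $H_n-\E H_n$. The ``uniformity in $\alpha_v$'' you defer is therefore the whole problem, not a technicality.

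(iii) Step~1 is also unsupported. Under $\P$ the required lower bound on $X_{h_n}(j)$ is false for small $\alpha$. Typically $X_{h_n}(n)=n^{\alpha e\ln(1/\alpha)+o(1)}$, and $\alpha e\ln(1/\alpha)<1-\alpha$, e.g.\ for $\alpha=1/10$. On $\{H_n\le h_n\}$ the bound may hold, but proving it there is as hard as the proposition itself.

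The missing idea is that no bootstrap is needed. The paper decomposes by \emph{time}, not depth, at the single scale $m=\Phi_\alpha(n)$. This gives $m$ conditionally independent components $\mathcal{T}^{(j)}_{m,n}$, and the cost \emph{per component} can be made arbitrarily large. Fix $\vartheta$ small and $k\ge 3e\ln(1/\vartheta)$. A component of size at least $\vartheta n/m$ hanging from a vertex at depth at least $k$ in $\mathcal{T}_m$ must be roughly $2k/3$ shorter than typical. By tightness this has conditional probability at most some $\varepsilon_k\to0$.

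Avoiding this for more than $m/2$ components requires one of two things. Either $m/4$ vertices of $\mathcal{T}_m$ lie at depth below $k$, which costs $\gtrsim m\ln^{(k)}m$ by the level large-deviation bound (Lemma~\ref{lem:kth-level-ldp}). Or $m/4$ components have size below $\vartheta n/m$, which costs $m\,\mathsf{D}(\tfrac14\|\vartheta)$ by negative association (Lemma~\ref{lem:subtree-size-2}). Thus $-\ln\P(H_n\le\alpha e\ln n)\ge m\min\{\tfrac12\ln(1/\varepsilon_k),\mathsf{D}(\tfrac14\|\vartheta)\}-O_k(1)$. Letting $n\to\infty$, then $k\to\infty$, then $\vartheta\downarrow0$ finishes the proof.

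Your ``extra cost of the first $\ell$ levels'' points in this direction. Without the time decomposition at scale $\Phi_\alpha(n)$ and the superlinear level bound for $\mathcal{T}_m$, however, it gives no quantitative gain.
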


\begin{proof}[Proof of Theorem \ref{thm:lowerLDP}]
  Fix $k \in \mathbb{N}$. By Proposition \ref{prop:lowerLDP}, there exists $C>0$ such that 
  $ \omega_{\alpha}(n) \le C \ln^{(k+1)}(n)  $ for all sufficiently large $n$. Thus $ { \omega_{\alpha} (n) }/{ \ln^{(k)} (n)} \to 0$ as $n \to \infty$. Proposition \ref{prop:lowerLDPup} yields $\omega_{\alpha}(n) \to \infty$. This completes the proof.
\end{proof}

\subsection{Proof of Proposition \ref{prop:lowerLDP}} 
For the lower bound, we employ the following  strategy, although a little bit  crude, to ensure that $H_{n} \leq \alpha e \ln n$. 

 Fix a positive integer $m \le n$ to be determined later.
  For each $1 \le j \leq m$, let $  \T^{(j)}_{m,n} $ denote the connected component of $\T_n$ that contains the vertex $v_j$, after removing all edges between vertices in $\{ v_i: i\in [m]\}$. See Figure \ref{fig:Tmn} for an illustration.
  Clearly, each $\T_{m,n}^{(j)}$ is a tree, and we take $v_j$ as its root. 
   According to the uniform attachment rule,
  $(\T_{m,n}^{(j)} , 1 \leq j \leq m)$ are independent of the tree $\T_{m}$. Moreover,   conditionally on their sizes $(|\T_{m,n}^{(j)}|)_{j=1}^m$, the trees  $(\T_{m,n}^{(j)} , 1 \leq j \leq m)$ are independent random recursive trees on $|\T_{m,n}^{(j)} |$ vertices.
  % (after modifying the labels of each vertices and keep the order accordingly). 

  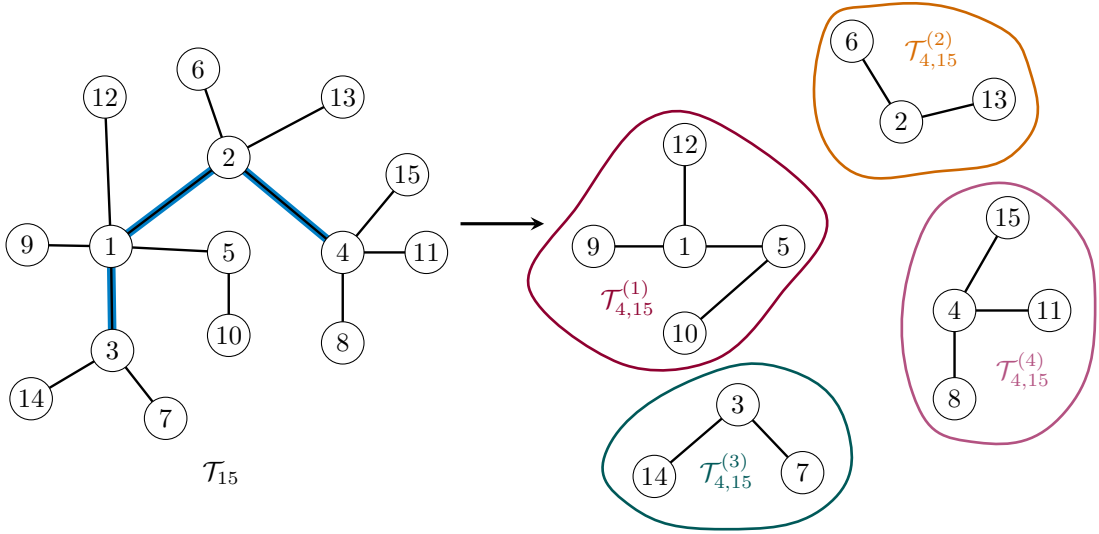
\begin{figure}[t]
  \centering
  \begin{tikzpicture}[
    >=stealth,
    x=1cm,
    y=1cm,
    every node/.style={font=\small},
    v/.style={circle, draw, fill=white, minimum size=5.6mm, inner sep=0pt},
    cut/.style={draw=cyan!60!blue, line width=3.2pt, line cap=round},
    edge/.style={draw=black, line width=0.9pt},
    forest/.style={draw, line width=1pt, line cap=round, line join=round},
    lab/.style={font=\small},
    tlab/.style={font=\small, inner sep=1pt}
  ]
    % Left: the tree \mathcal{T}_{15}
    \coordinate (L1) at (0,0);
    \coordinate (L2) at (1.55,1.18);
    \coordinate (L3) at (0.02,-1.38);
    \coordinate (L4) at (3.05,-0.08);
    \coordinate (L5) at (1.55,-0.08);
    \coordinate (L6) at (1.15,2.35);
    \coordinate (L7) at (0.72,-2.28);
    \coordinate (L8) at (3.05,-1.28);
    \coordinate (L9) at (-1.1,0.02);
    \coordinate (L10) at (1.55,-1.18);
    \coordinate (L11) at (4.15,-0.08);
    \coordinate (L12) at (-0.08,1.98);
    \coordinate (L13) at (3.05,1.98);
    \coordinate (L14) at (-1.05,-2.02);
    \coordinate (L15) at (3.9,0.95);

    \draw[cut] (L1)--(L2);
    \draw[cut] (L1)--(L3);
    \draw[cut] (L2)--(L4);

    \draw[edge] (L1)--(L9);
    \draw[edge] (L1)--(L12);
    \draw[edge] (L1)--(L2);
    \draw[edge] (L1)--(L5);
    \draw[edge] (L1)--(L3);
    \draw[edge] (L3)--(L14);
    \draw[edge] (L3)--(L7);
    \draw[edge] (L2)--(L6);
    \draw[edge] (L2)--(L13);
    \draw[edge] (L2)--(L4);
    \draw[edge] (L4)--(L15);
    \draw[edge] (L4)--(L11);
    \draw[edge] (L4)--(L8);
    \draw[edge] (L5)--(L10);

    \foreach \n/\p in {
      1/L1,2/L2,3/L3,4/L4,5/L5,6/L6,7/L7,8/L8,9/L9,10/L10,11/L11,12/L12,13/L13,14/L14,15/L15}
      \node[v] at (\p) {$\n$};

    \node[lab] at (1.45,-3) {$\mathcal{T}_{15}$};

    % Arrow
    \draw[->, line width=1.2pt] (4.6,0.3) -- (5.7,0.3);

    % Right: components T^{(j)}_{4,15}
    % component 1
    \coordinate (A1) at (7.55,0.0);
    \coordinate (A5) at (8.85,0.0);
    \coordinate (A9) at (6.35,0.0);
    \coordinate (A10) at (7.55,-1.15);
    \coordinate (A12) at (7.55,1.35);
    \draw[edge] (A1)--(A5);
    \draw[edge] (A1)--(A9);
    \draw[edge] (A1)--(A12);
    \draw[edge] (A5)--(A10);
    \foreach \n/\p in {1/A1,5/A5,9/A9,10/A10,12/A12}
      \node[v] at (\p) {$\n$};
    \draw[purple!75!black, forest]
      plot[smooth cycle, tension=0.6] coordinates {
        (5.5,-0.55) (6.15,0.55) (7.0,1.65) (7.95,1.65)
        (9.4,0.45) (8.8,-0.65) (7.95,-1.5) (6.7,-1.55)
      };
    \node[tlab, text=purple!75!black] at (6.8,-0.7) {$\mathcal{T}^{(1)}_{4,15}$};

    % component 2
    \coordinate (B2) at (10.4,1.65);
    \coordinate (B6) at (9.75,2.72);
    \coordinate (B13) at (11.62,1.95);
    \draw[edge] (B2)--(B6);
    \draw[edge] (B2)--(B13);
    \foreach \n/\p in {2/B2,6/B6,13/B13}
      \node[v] at (\p) {$\n$};
    \draw[orange!80!black, forest]
      plot[smooth cycle, tension=0.85] coordinates {
        (9.55,1.15) (9.35,2.75) (10.15,3.2) (11.5,3.0) (12.1,2.3) (12.0,1.35) (10.95,1.0)
      };
    \node[tlab, text=orange!85!black] at (10.8,2.6) {$\mathcal{T}^{(2)}_{4,15}$};

    % component 3
    \coordinate (C3) at (8.25,-2.1);
    \coordinate (C14) at (7.15,-3.05);
    \coordinate (C7) at (9.1,-3.0);
    \draw[edge] (C3)--(C14);
    \draw[edge] (C3)--(C7);
    \foreach \n/\p in {3/C3,14/C14,7/C7}
      \node[v] at (\p) {$\n$};
    \draw[teal!70!black, forest]
      plot[smooth cycle, tension=0.8] coordinates {
        (6.55,-2.65) (6.85,-3.45) (8.05,-3.75) (9.4,-3.45) (9.7,-2.45) (8.95,-1.65) (7.7,-1.75)
      };
    \node[tlab, text=teal!70!black] at (8.1,-3) {$\mathcal{T}^{(3)}_{4,15}$};

    % component 4
    \coordinate (D4) at (11.1,-0.85);
    \coordinate (D15) at (11.8,0.38);
    \coordinate (D11) at (12.35,-0.85);
    \coordinate (D8) at (11.1,-2.02);
    \draw[edge] (D4)--(D15);
    \draw[edge] (D4)--(D11);
    \draw[edge] (D4)--(D8);
    \foreach \n/\p in {4/D4,15/D15,11/D11,8/D8}
      \node[v] at (\p) {$\n$};
    \draw[magenta!70!black, forest]
      plot[smooth cycle, tension=0.82] coordinates {
        (10.55,-1.95) (10.55,-0.15) (11.4,0.8) (12.55,0.4) (12.95,-0.8) (12.55,-2.2) (11.4,-2.6)
      };
    \node[tlab, text=magenta!70!black] at (12,-1.65) {$\mathcal{T}^{(4)}_{4,15}$};
  \end{tikzpicture}
  \caption{Example of the forest $(\mathcal{T}^{(j)}_{4,15})_{1\le j\le 4}$ obtained from $\mathcal{T}_{15}$ by removing the edges between vertices in $V_{[4]}=\{1,2,3,4\}$. The highlighted blue edges on the left are precisely the removed edges.}
  \label{fig:Tmn}
\end{figure}

  \begin{lemma} \label{lem:subtree-size-1}
 There exists an absolute constant $M$  such that  
\begin{equation}
\P\left(  \max_{1 \leq j \leq m}| \T_{m,n}^{(j)} | \leq  2  \frac{n}{m}    \right) \geq e^{- m}  \,  \text{ for all }  n/M \geq m \geq M .\label{eq:lower-strategy-2} 
\end{equation}
  \end{lemma}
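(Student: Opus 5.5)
The plan is to identify the law of the size vector $(|\T_{m,n}^{(j)}|)_{j=1}^m$ and then estimate directly. After time $m$, each new vertex $v_{t+1}$ ($t\ge m$) attaches to a uniform vertex of $\T_t$, and so it joins component $j$ with probability proportional to the current size of that component. The size vector therefore evolves as a P\'olya urn with $m$ colours, each starting with one ball. Equivalently, $(|\T_{m,n}^{(j)}|-1)_{j}$ is Dirichlet-multinomial with parameters $(1,\dots,1)$ and $n-m$ draws. For this urn every composition $(k_1,\dots,k_m)$ with $k_j\ge 1$ and $\sum k_j=n$ has the same probability $\binom{n-1}{m-1}^{-1}$, because each insertion order producing given sizes has probability $\prod_j (k_j-1)!\,/\,[m(m+1)\cdots(n-1)]$, and multiplying by the multinomial count of orders gives a uniform law. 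The task thus reduces to counting compositions of $n$ into $m$ positive parts, each at most $2n/m$.

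For the count, set $L=\lfloor 2n/m\rfloor$. The target is
\[
\P\Big(\max_j |\T_{m,n}^{(j)}|\le L\Big)=\frac{\#\{(k_j): 1\le k_j\le L,\ \sum k_j=n\}}{\binom{n-1}{m-1}} .
\]
The cleanest route I see uses the Dirichlet representation: the uniform composition can be coupled with a uniform point $(S_1,\dots,S_m)$ of the simplex, i.e.\ spacings of $m-1$ uniform points in $[0,1]$. Alternatively, one can avoid continuous approximations by a direct lower bound. Compositions whose parts all lie in $[n/(2m),\,2n/m]$ contain the box obtained by fixing $k_1,\dots,k_{m-1}\in[\,n/m-\delta n/m,\ n/m+\delta n/m\,]$ freely with a small fixed $\delta$, then letting $k_m$ be determined; one checks that $k_m$ stays in range only if the sum of deviations is small, so this needs care. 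A cleaner variant is to choose each $k_j$ in a window of width $\approx n/m^2$ around $n/m$, so the total deviation is at most about $n/m$ and $k_m\in[1,2n/m]$. This gives roughly $(n/m^2)^{m-1}$ compositions. That is too few against $\binom{n-1}{m-1}\approx (en/m)^{m-1}$, losing a factor $m^{m}$.

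Hence the better plan is probabilistic. Write $S_j=E_j/\sum_i E_i$ with i.i.d.\ $\mathrm{Exp}(1)$ variables $E_i$; the event $\{\max_j S_j\le 2/m\}$ contains $\{E_j\le 1.9\ \forall j\}\cap\{\sum E_i\ge 0.95m\}$. Then
\[
\P(E_j\le 1.9\ \forall j)=(1-e^{-1.9})^m\ge (0.85)^m ,
\]
and conditionally on that event the $E_j$ are i.i.d.\ with mean about $0.70$. This mean is too small, so the thresholds must be tuned: replace $1.9$ by $2-\epsilon$ and use the event $\{\sum E_i\ge (1-\epsilon/4) m\}$. The conditional mean of $E\wedge$-truncated at $2-\epsilon$ is below $1$, so a tilt is needed instead. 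Better still, use the exact density: $(S_1,\dots,S_m)$ is uniform on the simplex, so
\[
\P\Big(\max_j S_j\le \tfrac 2m\Big)=\sum_{i}(-1)^i\binom mi\Big(1-\tfrac{2i}{m}\Big)_+^{m-1}.
\]
Rather than estimating this alternating sum, I would apply the union bound on the complement, $\P(\max S_j>2/m)\le m(1-2/m)^{m-1}\approx me^{-2}$, which is useless for large $m$. So I will go with the tilted-exponential argument: take $E_j$ i.i.d.\ with density proportional to $e^{-\lambda x}$ on $[0,b]$ and the correct Radon–Nikodym factor, and choose $b<2$ and $\lambda<0$ so that the tilted mean exceeds $b/2$. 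The probability cost is $e^{-cm}$ with explicit $c$; we need $c\le 1$, which holds comfortably since even the crude $(0.85)^m$ factor costs only $e^{-0.17m}$.

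The main obstacle is the discrete-versus-continuous transfer together with this exponent bookkeeping. I will handle the transfer by realising the urn exactly: sizes are $1+$ the counts of $n-m$ uniform points falling into the $m$ spacings. Given spacings $\max S_j\le 1.9/m$, a binomial concentration bound (Chernoff, using $n/m\ge M$) gives that all counts are at most $2n/m-1$ with probability at least $1-m e^{-c n/m}$, which is at least $1/2$ once $M$ is large. Combining this with a spacing bound of the form $\P(\max S_j\le 1.9/m)\ge e^{-0.9m}$ yields $e^{-m}$ for $m\ge M$.
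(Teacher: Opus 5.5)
Your reduction is sound. The P\'olya-urn identification and the uniform law on compositions match the paper, and the Dirichlet--multinomial realisation is exact. The spacing bound $\P(\max_j S_j\le 1.9/m)\ge e^{-0.9m}$ is also true for large $m$: tilting $\mathrm{Exp}(1)$ to the density $\propto e^{0.16y}$ on $[0,2]$ gives mean $>2/1.9$ at entropy cost $\approx 0.36$. You only gesture at that bookkeeping, but it can be completed.

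\textbf{The gap is in the discrete transfer.} Condition on $S=s$ with $\max_j s_j\le 1.9/m$. Each count is $\mathrm{Bin}(n-m,s_j)$ with mean below $1.9q$, where $q=n/m$. Chernoff gives a per-coordinate failure probability $e^{-cq}$, and your union bound gives $1-me^{-cq}$. This is at least $\frac12$ only if $q\ge c^{-1}\ln(2m)$, not merely if $q\ge M$.

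The lemma is claimed for all $n/M\ge m\ge M$ with $M$ absolute. Taking $m=\lfloor n/M\rfloor$ and $n\to\infty$ keeps $q\approx M$ bounded while $m\to\infty$. In that regime the conditional probability itself decays exponentially in $m$: a constant fraction of the $s_j$ are at least $1/(2m)$, and negative association of the multinomial bounds the probability by a product of factors $1-p(q)<1$. So the claim ``at least $1/2$'' is false there, not just its proof.

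As written, you prove the lemma only when $n/m\gg \ln m$. That happens to cover Proposition~\ref{prop:lowerLDP}, where $n/m$ is a power of $n$, but it does not prove the statement. Since all $m$ coordinates must be controlled at once, you need a product-type lower bound such as $(1-e^{-cq})^m\ge e^{-2me^{-cq}}$, whose cost $e^{-\epsilon m}$ your spacing exponent can absorb. Negative association points the wrong way for this, so it does not come for free.

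One repair is Poissonisation. The event $D=\{\text{all counts}\le 2q-1\}$ is decreasing, so $N'\mapsto \P(\mathrm{Mult}(N',s)\in D)$ is nonincreasing. Take $\Lambda\sim\mathrm{Poi}((1+\delta)(n-m))$ with $0<\delta<1/19$, and independent $P_j\sim\mathrm{Poi}((1+\delta)(n-m)s_j)$. Then
\[
\P\big(\mathrm{Mult}(n-m,s)\in D\big)\ge\prod_{j=1}^{m}\P(P_j\le 2q-1)-\P(\Lambda<n-m)\ge(1-e^{-cq})^m-e^{-c'\delta^2 m(q-1)}.
\]
This is at least $\frac12 e^{-\epsilon m}$ once $q\ge M$, and $e^{-0.9m}\cdot\frac12 e^{-\epsilon m}\ge e^{-m}$ for large $m$.

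The paper avoids the continuous detour altogether. It counts compositions with $n_j\in[q/4,7q/4]$ for $j<m$ and last part in $[1,2q]$. It does so by comparing with i.i.d.\ integer variables whose point masses are at most $1/(2r)$, $r=3q/4-1$. Berry--Esseen then shows their centred sum lands in a window of width about $2q$ with probability at least $1/(4\sqrt m)$. No union over coordinates occurs, so $|\mathfrak G_{n,m}|\ge q^{m-1}$ and the final bound $(m-1)!/m^{m-1}\ge e^{-m}$ hold uniformly in the stated range.
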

 
  \begin{lemma}
    \label{lem:rrt-heightk}
    Let $A_{k}(n)$ denote the number of increasing trees on $n$ vertices with height less than or equal to $k$.  There exists a constant $C_{k}$ satisfying
\begin{equation}\label{eq-counting-Akn}
  A_{k}(n) \ge \exp \left(   n \ln n-n \ln^{(k)}(n) -C_{k} n \right) \, \text{ for all large } n.
\end{equation} 
  \end{lemma}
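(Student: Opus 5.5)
The plan is to induct on $k$, building trees of height at most $k$ by hanging many small trees of height at most $k-1$ under the root. The base case $k=1$ is trivial: $A_1(n)=1$, since only the star has height $1$. The claimed bound then reads $1\ge \exp(n\ln n-n\ln n-C_1n)=e^{-C_1n}$, which holds with $C_1=0$. It is convenient to set $\ln^{(0)}n=n$, though we only need $k\ge1$.

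For the inductive step, use the following decomposition of an increasing tree on label set $\{1,\dots,n\}$ with root $1$. Removing the root splits $\{2,\dots,n\}$ into blocks, one per child of the root. The smallest label of each block is that child, and the block carries an increasing tree rooted at it. The whole tree has height $\le k$ exactly when every block tree has height $\le k-1$. We restrict to partitions of $\{2,\dots,n\}$ into $m$ blocks with prescribed sizes $s_1,\dots,s_m$, each equal to $\lfloor (n-1)/m\rfloor$ or $\lceil (n-1)/m\rceil$. This gives
\begin{equation}
A_k(n)\;\ge\;\frac{(n-1)!}{m!\,\prod_{i} s_i!}\,\prod_{i=1}^m A_{k-1}(s_i).
\end{equation}
The factor $1/m!$ is an overcount correction for unordered blocks, and we only need a lower bound, so it is harmless.

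Now insert the inductive hypothesis $A_{k-1}(s)\ge \exp(s\ln s-s\ln^{(k-1)}s-C_{k-1}s)$ together with Stirling's bound $s!\le \exp(s\ln s-s+O(\ln s))$. This gives
\[
\frac{A_{k-1}(s)}{s!}\ge \exp\big(s-s\ln^{(k-1)}s-C_{k-1}s-O(\ln s)\big).
\]
Multiplying over the $m$ blocks and using $\sum_i s_i=n-1$ and $(n-1)!\ge \exp(n\ln n-n-O(\ln n))$, the terms $\pm n$ cancel. We obtain
\[
\ln A_k(n)\ge n\ln n-n\max_i\ln^{(k-1)}s_i-C_{k-1}n-m\ln m-O(m\ln n).
\]
Choose $m=\lceil n/\ln n\rceil$, so that $s_i=\ln n+O(1)$. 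Then $\ln^{(k-1)}s_i=\ln^{(k)}n+o(1)$, and both $m\ln m$ and $m\ln n$ are $O(n)$. This yields the claim with $C_k=C_{k-1}+O(1)$ for all large $n$. The condition "large $n$" is needed so that $s_i\approx\ln n$ lies in the range where the inductive bound for $A_{k-1}$ applies.

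The main obstacle is the bookkeeping in the inductive step. The $-n$ from Stirling for $(n-1)!$ must cancel exactly against the $+s$ terms from the blocks. The error terms $m\ln m$ and $O(m\ln s)$ must stay $O(n)$, which forces $m$ to be no larger than about $n/\ln n$. The rounding of unequal block sizes must also not spoil the estimate $\ln^{(k-1)}s_i=\ln^{(k)}n+o(1)$; this is immediate because iterated logarithms are slowly varying, so the choice $s\approx\ln n$ works at every level of the induction.
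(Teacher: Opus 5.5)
Your proof is correct and is essentially the paper's argument. The paper also splits the non-root vertices into about $n/\ln n$ blocks of size about $\ln n$, counted as in your bound by a multinomial coefficient divided by the number of orderings of the blocks, and then splits each block into blocks of size about $\ln^{(2)} n$, and so on down $k-1$ levels. The only difference is packaging: you carry out one level of the decomposition and use $A_{k-1}$ as a black box via induction on $k$, so the telescoping $\sum_{i=1}^{k-1} n[\ln^{(i)} n-\ln^{(i+1)} n]=n\ln n-n\ln^{(k)} n$ is handled automatically, whereas the paper counts the whole $(k-1)$-level partition tree explicitly through $\ln\mathcal{P}_N=N[\ln N-\ln^{(2)}N-1+o(1)]$ and the node counts $L_i(n)$.
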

  
\begin{proof}[Proof of Proposition \ref{prop:lowerLDP} admitting Lemmas \ref{lem:subtree-size-1} and \ref{lem:rrt-heightk}] 

Fix a large constant $k$ independent of $n$. Our strategy is first to 
 force the height of  $\mathcal{T}_m$ to be less than $k$, i.e.,  $H(\mathcal{T}_{m}) \leq k$; and then to require that $  H( |\T_{m,n}^{(j)} | ) \leq \alpha e \ln n-k$ for all $1 \le j \le m$. It's clear then we have $H_{n} \leq \alpha e \ln n$.  Therefore we obtain
\begin{align}
  \P \left(  H_{n} \leq \alpha e \ln n \right) & \geq 
  \P \left( \mathrm{dist}(v_1,v_j) \leq k ,  H( |\T_{m,n}^{(j)} | )\leq \alpha e \ln n -k,  \forall 1 \leq j\leq m \right)  \\
  & \geq  \mathsf{F}(m,k) \, \E \bigg[  \prod_{j=1}^{m}  \mathsf{F}( |\T_{m,n}^{(j)} |, \alpha e \ln n -k)  \bigg], 
\end{align}
where $\mathsf{F}(n,x)\coloneq \P(H_{n} \leq x) \le 1$.  Notice that $\mathsf{F}(n,x)  $ is decreasing in $n$ for fixed $x$.
On the event $\{  \max_{1 \leq j \leq m} |\T_{m,n}^{(j)} | \leq 2 \frac{n}{m} \}$, we have 
\begin{equation}
  \P \left(  H_{n} \leq \alpha e\ln n \right)   \geq \mathsf{F}(m,k)\P\left(  \max_{1 \leq j \leq m}| \T_{m,n}^{(j)} | \leq 2 \frac{n}{m} \right)  \mathsf{F}\left( 2 \frac{n}{m}, \alpha e\ln n -k\right)^{m} .  \label{eq:lower-strategy}
\end{equation}

In the following let $K>0$ be a large constant to be chosen later and  set 
\begin{equation}
    m=m_{K,n} \coloneq 2K n^{1- \alpha } (\ln  n)^{-3 /(2e)}  \ \text{ so that } \  \frac{2n}{m} = K^{-1}  n^{\alpha} \, (\ln n)^{3 /(2e) } .
\end{equation}   According to  \eqref{eq-ABF-EH-bound}, we obtain that provided $n$ is sufficiently large,
\begin{align}
  \E \big[ H_{ {2n}/{m_{K,n}} } \big]
  & = e \ln \big(  K^{-1} n^{\alpha} (\ln  n)^{3  /2e} \big) - \frac{3}{2}  \ln \big(   \ln (n^{\alpha+o_{n}(1)}) \big)  \\
  & = \alpha e \ln n -  e \ln K- \frac{3}{2} \ln \alpha + o_{n}(1)  
\end{align}
The tightness of $(H_{n}- \E[H_{n}])_{n \ge 1}$ yields that  provided $e \ln K + \frac{3}{2} \ln \alpha -k$ is sufficiently large, then  we have 
\begin{align}
& 1- \mathsf{F} \Big( \frac{2n}{m_{K,n}}, \alpha e \ln n - k \Big)   =   \P \big( H_{ 2n/m_{K,n}}  > \alpha e \ln n - k\big)   \\
  & \qquad  \leq   \P \Big( H_{ 2n/m_{K,n}}  > \E[ H_{ 2n/m_{K,n} }] + e \ln K + \frac{3}{2} \ln \alpha -k \Big)  \leq 1-1/e . 
  \label{eq:lower-strategy-1} 
\end{align}  
 for all large $n$. 
Plugging the inequalities \eqref {eq:lower-strategy-1}  as well as   \eqref {eq:lower-strategy-2}   back into \eqref{eq:lower-strategy}  yields   
\begin{align}
  \P \left(  H_{n} \leq \alpha e \ln n \right) 
  & \geq \mathsf{F}( m_{K,n},k)      e^{-2 m_{K,n}} . 
\end{align}

Note that we have $\mathsf{F}(m,k)=A_{k}(m)/(m-1)! \ge \exp( - m \ln ^{(k)} m -\Theta(m) )$ by using Lemma \ref{lem:rrt-heightk}. Finally  we  conclude  
\begin{align}
  \P \left(  H_{n} \leq \alpha e \ln n \right) 
  & \geq \mathsf{F}( m_{K,n},k)      e^{-2 m_{K,n}} \ge   e^{- m_{K,n} \ln^{(k)}  m_{K,n}} \ e^{- \Theta(m_{K,n})}  \\
  &=  \exp\left(   - \Theta ( \ln^{(k)} n ) n^{1-\alpha} (\ln n)^{-3/2e}\right).
\end{align}
This establishes the lower bound in Proposition \ref{prop:lowerLDP}.
\end{proof}

\medskip 

\begin{proof}[Proof of Lemma \ref{lem:subtree-size-1}] 
% It remains to show that   $\inf_{n \geq m \geq 2}\P\left(  \max_{1 \leq j \leq m}| \T_{m,n}^{(j)} | \leq  2 \frac{n}{m}    \right) \geq e^{- \Theta(m)}$. 
Consider the collection $ (|\T_{m,n}^{(j)} |: 1 \leq j \leq m) $ as a process indexed by $n \geq m$.  For the initial time $n=m$, we have
   $ | \T_{m,m}^{(j)} | =1 $  for all $1 \leq j \leq m$. Moreover given the sizes $  |\T_{m,n}^{(j)} | : 1 \leq j \leq m$, the evolution of the process is described by  
\begin{equation}
   |\T_{m,n+1}^{(j)} | = |\T_{m,n}^{(j)} | + \ind{ j= \mathcal{V}} \text{ for all }  1 \leq j \leq m
\end{equation} 
where the random index $\mathcal{V}$ is chosen with conditional probabilities $\P( \mathcal{V}= v_k \mid |\T_{m,n}^{(j)} | , j \leq m )= \frac{|\T_{m,n}^{(k)} |}{n}$ for all $1 \leq k \leq m$. 
In other words, $ (|\T_{m,n}^{(j)} |: 1 \leq j \leq m) $  has the same distribution as a Pólya urn model with $n$ balls, $m$ types, and an initial configuration of $(1,1,\dots,1)$. Then notice that the Pólya urn model is exchangeable:
for each possible configuration $(n_j)_{j=1}^{m} $, we have 
\begin{align}
  \P( |\T_{m,n}^{(j)} | = n_{j} : 1 \leq j \leq m  ) &=  \binom{n - m}{n_{1}-1, \cdots, n_{m}-1  } \frac{ (n_{1} -1 )! \cdots (n_{m} -1 )!}{m(m+1) \cdots (n-1)} \\
  &= \frac{(n-m)! (m-1)!}{(n-1)!} =\frac{1}{\binom{n-1}{m-1}}. 
\end{align} 
In other words $ (|\T_{m,n}^{(j)} |: 1 \leq j \leq m) $ is uniformly distributed over all compositions of $n$ of size $m$
\begin{equation}\label{eq:subtree-size-distri}
\mathfrak{C}_{n,m}\coloneq \bigg\{ (n_j)_{j=1}^{m}  : n_{j} \in \mathbb{N},   \sum_{j=1}^{m} n_j = n   \bigg\}. 
\end{equation} 

For the remainder of the proof, set $q\coloneq n/m$ and let 
 $ \mathfrak{G}_{n,m}$ denote the subset of $ \mathfrak{C}_{n,m}$ such that   $n_{j} \in [q/4,7q/4] $ for every $1 \leq j \leq m-1$ and $ \sum_{j=1}^{m-1} n_{j} \geq  n- 2q $. Thus every composition in ${\mathfrak{G}}_{n,m}$ has all its coordinates at most $2q$.  We claim that, for all sufficiently large $m$ and $q$,
\begin{equation}\label{eq-card-G-nm}
 |{\mathfrak{G}}_{n,m}|\geq q^{m-1} = (\frac{n}{m})^{m-1}
\end{equation} 
Consequently, since $mq=n$ we have 
\begin{align}
 \P\left(\max_{1\leq j\leq m}|\T_{m,n}^{(j)}|\leq 2\frac{n}{m}\right)
 &\geq\frac{|{\mathfrak{G}}_{n,m}|}{|\mathfrak{C}_{n,m}|} 
 \geq\frac{ (n/m)^{m-1}}{\binom{n-1}{m-1}}\\
 &
 \geq\frac{(m-1)!}{m^{m-1}}\geq e^{-m},
\end{align}
where the last inequality follows from Stirling's lower bound.

It remains to prove \eqref{eq-card-G-nm}.  Put $r\coloneq 3q/4-1$, let $Q\sim\operatorname{Unif}[q-r,q+r]$ and $U\sim\operatorname{Unif}[0,1]$ be independent, and define
\[
 N\coloneq\lfloor Q\rfloor+\ind{U\leq Q-\lfloor Q\rfloor} \in  [q/4,7q/4]\cap\mathbb{N}
\]
This definition guarantees $\E[N\mid Q]=Q$, and hence $\E N=q$.  Moreover,  for every integer $k$,   we have 
\[
\begin{aligned}
\P(N=k)
&\le \frac1{2r} \Bigl[ 
\int_{k-1}^{k}(x-k+1)\, \dif x 
+
\int_k^{k+1}(k+1-x)\, \dif x 
 \Bigr] = \frac{1}{2r} .
\end{aligned}
\] 
Let $N_1,\ldots,N_{m-1}$ be independent copies of $N$; and let $N_m:= mq- \sum_{j=1}^{m-1} N_{j} $. 
By construction,  
\[
(N_j)_{j=1}^{m} \in {\mathfrak{G}}_{n,m} \quad \text{ if and only if } \quad  mq- 2q \le \sum_{j=1}^{m-1}  N_{j}  \le  mq-1 . 
\]
Putting  $X_j=(N_j-q)/q$, we can rewrite the condition above as $\sum_{j=1}^{m-1} X_j \in [ -1,1-q^{-1}]$. This gives  
\begin{align}
 P_{q}(m) &\coloneq \P \biggl( \sum_{j=1}^{m-1} X_j \in [ -1,1-q^{-1}]   \biggr) = \P( (N_j)_{j=1}^{m} \in {\mathfrak{G}}_{n,m}  )   \\
  & = \sum_{ (n_{j})_{j=1}^{m} \in \mathfrak{G} _{n,m} }  \prod_{j=1}^{m-1}\P(  N_j = n_j  ) \le \frac{1}{(2r)^{m-1}} |\mathfrak{G} _{n,m} | . \label{eq:bound-p-q-m}
\end{align} 
Write $ \sigma_q^2\coloneq\Var(X)
 $ and $
 \rho_q\coloneq\E|X|^3$.
The Berry--Esseen inequality (see \cite{Shevtsova11}) yields that there is  a  universal constant $C_0  <1/2$ such that 
\begin{align}    P_{q}(m)  \ge  \int_{- \frac{1}{ \sigma_q \sqrt{(m-1) }}}^{\frac{1-1/q}{\sigma_q \sqrt{(m-1)  }}} e^{-\frac{x^2}{2}} \frac{\dif x}{\sqrt{2 \pi}}  - \frac{2 C_0}{\sqrt{m-1}} \frac{\rho_q}{\sigma_q^3}. 
\end{align}
Since $|N-Q|\leq1$ and $(Q-q)/q$ is uniform on $[-r/q,r/q]$, we have, as $q\to\infty$, 
$ \sigma_q^2=\frac{3}{16}+O(q^{-1}) $,  and  $
 \frac{\rho_q}{\sigma_q^3}=\frac{3\sqrt3}{4}+O(q^{-1}). $
Thus, denoting by $\Phi$ the distribution function of a standard Gaussian random variable, the Berry--Esseen inequality gives
\begin{equation}
 P_{q}(m) \ge 
 \frac{1}{\sqrt{m-1}}\left(
 \frac{8}{\sqrt{6\pi}}
 -2C_0\frac{3\sqrt3}{4}
 +o_{q \wedge m}(1)\right)
 \geq\frac{1}{4\sqrt{m}}
\end{equation}
for sufficiently large $m$ and $q$.  Here the last inequality follows from $C_0\leq 1/2$ and
$
 \frac{8}{\sqrt{6\pi}}- \frac{3\sqrt3}{4}>\frac14$.
Combining this estimate with \eqref{eq:bound-p-q-m}  yields
\[
 |\mathfrak{G}_{n,m}|
 \geq\frac{(2r)^{m-1}}{4\sqrt m}  \geq q^{m-1},
\]
provided $q$, $m$ are large enough.
This proves \eqref{eq-card-G-nm} and completes the proof. 
\end{proof} 
\medskip 

\begin{figure}[t]
  \centering
  \begin{tikzpicture}[
    >=stealth,
    x=1cm,
    y=1cm,
    every node/.style={font=\small},
    big/.style={circle, draw, fill=gray!12, minimum size=1.18cm, inner sep=0pt},
    mid/.style={circle, draw, fill=gray!12, minimum size=0.95cm, inner sep=0pt},
    small/.style={circle, draw, fill=gray!12, minimum size=0.42cm, inner sep=0pt},
    rem/.style={circle, draw=violet, fill=violet!10, minimum size=0.95cm, inner sep=0pt},
    remsmall/.style={circle, draw=violet, fill=violet!10, minimum size=0.42cm, inner sep=0pt},
    brace/.style={decorate, decoration={brace, amplitude=5pt}},
    vtext/.style={text=violet}
  ]
    \node[big] (A) at (0,0) {$A$};
    \node at (-0.05,-1.05) {size};
    \node at (-0.05,-1.48) {$N+1$};

    \node[mid] (M1) at (4.2,2.15) {};
    \node[mid] (M2) at (4.2,0.85) {};
    \node at (4.2,-0.45) {$\vdots$};
    \node[mid] (M3) at (4.2,-1.85) {};
    \node[rem] (MR) at (4.0,-3.35) {};

    \draw[->] (A.25) to[out=25,in=180] (M1.180);
    \draw[->] (A.8) to[out=8,in=180] (M2.180);
    \draw[->] (A.-12) to[out=-12,in=180] (M3.180);
    \draw[->,violet] (A.-32) to[out=-32,in=180] (MR.180);

    \node at (4.18,3.03) {size $s(N)+1$};
    \node[vtext] at (4.18,-4.18) {size $r(N)$};
    \draw[dashed, decorate, decoration={brace, amplitude=8pt}] 
    (5.05,2.65) -- (5.05,-4)
    node[midway, right=0.2cm, align=center] {$\ d(N)$ many};
    \node at (5.92,1.1) {$\vdots$};
     \node at (5.92,0.4) {$\vdots$};
    \node at (5.92,-1.05) {$\vdots$};
    \node at (5.92,-2.05) {$\vdots$};
    \node at (5.92,-3.05) {$\vdots$};

    \node[small] (S1) at (8.45,3.55) {};
    \node[small] (S2) at (8.45,2.78) {};
    \node at (8.45,2.05) {$\vdots$};
    \node[small] (S3) at (8.45,1.25) {};
    \node[remsmall] (SR) at (8.45,0.48) {};

    \draw[->] (M1.20) to[out=20,in=180] (S1.180);
    \draw[->] (M1.8) to[out=8,in=180] (S2.180);
    \draw[->] (M1.-8) to[out=-8,in=180] (S3.180);
    \draw[->,violet] (M1.-18) to[out=-18,in=180] (SR.180);

    \node at (8.52,4.25) {size $s(s(N))+1$};
   \draw[decorate, decoration={brace, amplitude=10pt}, dashed] 
    (8.9,3.9) -- (8.9,0.1)
    node[midway, right=0.16cm, align=center] { $\quad  d(s(N))$ many};
    \node[vtext] at (9.12,-0.35) {size $r(s(N))$};

    \node at (9.95,3.8) {$\vdots$};
    \node at (9.95,2.7) {$\vdots$};
    \node at (9.95,1.5) {$\vdots$};
    \node at (9.95,0.7) {$\vdots$};
  \end{tikzpicture}
  \caption{A partition scheme: from a set $A$ of size $N+1$, one obtains $d(N)-1$ blocks of size $s(N)+1$ and one remainder block of size $r(N)$; iterating the same rule on a size-$s(N)+1$ block yields $d(s(N))-1$ blocks of size $s(s(N))+1$ and one remainder block of size $r(s(N))$.}
  \label{fig:hk-partition}
\end{figure}
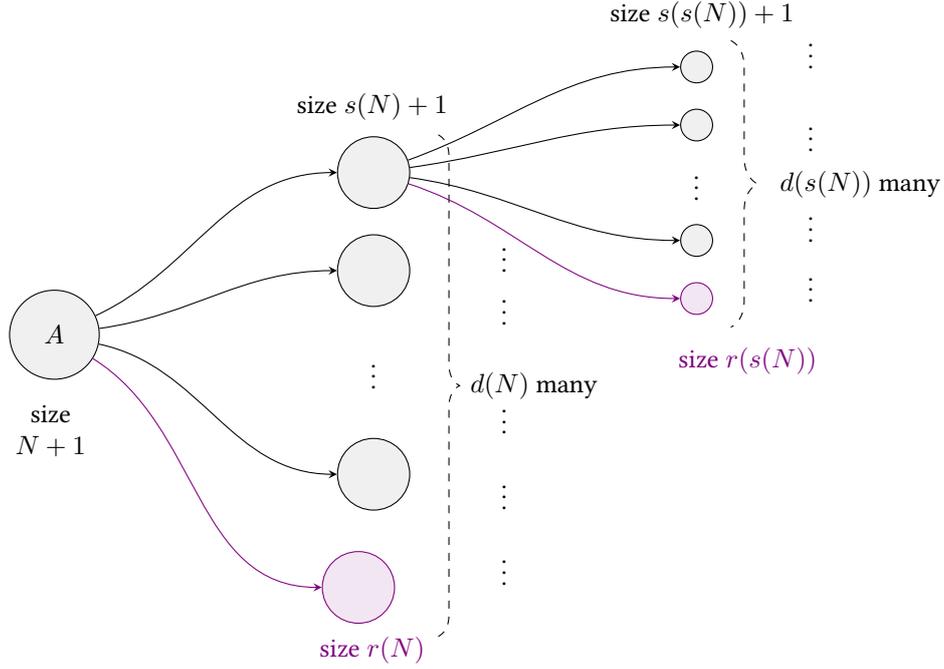

\begin{proof}[Proof of Lemma \ref{lem:rrt-heightk}]
We describe a procedure for constructing a family of increasing trees with  height  $k$. This  relies on a specific partitioning scheme.  
First, for any non-empty subset of vertices  $A \subset V_{[n+1]}\coloneq\{v_{i}\}_{i=1}^{n+1}$,  we identify a unique leader $v^{*}(A)$  with the minimum index. The set of followers is then defined  as 
\begin{equation}
  \hat{A} \coloneq A \setminus \{ v^{*}(A) \} .
\end{equation}
Let $N \coloneq |\hat{A}|$.  Let   $s(N) \coloneq \lfloor \ln N \rfloor$ and 
 $d(N)\coloneq \lfloor \frac{N}{ s(N)+1 } \rfloor$.  Define  $r(N)\coloneq N- d(N) [s(N)+1] \in [0,1+s(N)]$.   
Let  $\mathcal{P}(\hat{A})$ denote  the collection of all partitions of  $\hat{A}$ into $d(|\hat{A}|)$ subsets, where $d(|{\hat{A}}|)-1$ of the subsets have size $s(|\hat{A}|)+1$, and one  has size $1+s(|\hat{A}|)+r(|\hat{A}|)$. See Figure \ref{fig:hk-partition} for an illustration.
  The cardinality of this collection is given by
\begin{equation}
\mathcal{P}_{N} \coloneq  | \mathcal{P}(\hat{A}) |= \left\{
 \begin{aligned}
&\frac{1}{ [d(N)-1] !} \frac{N!}{ [ (1+s(N) )! ]^{d(N)-1} [1+s(N)+r(N)]! }  \  &\text{ if } r(N) \ge 1  \,; \\
&\frac{1}{d(N) !} \frac{N!}{ [ (1+s(N))! ]^{d(N)}   }  \   &\text{ if } r(N) = 0 \,.
 \end{aligned} \right.
\end{equation}
Stirling's approximation yields the asymptotic behavior of $\ln \mathcal{P}_{N}$: 
\begin{equation}\label{eq:asy-P-N}
  \ln    \mathcal{P}_{N}   =  N \left[  \ln N -  \ln^{(2)} N - 1  +o_{N} (1)  \right] \text{ as } N \to \infty .
\end{equation}
 
% \begin{figure}[ht]
%   \includegraphics[width=0.8\textwidth]{hk.jpg}
% \end{figure}

Using this scheme, we now construct the increasing tree on $V_{[n+1]}$ with height $k$. Assume that $k \ge 3$ and $n$ is large so that $\ln^{(k)}(n) \ge 10^{10}$.  We proceed in two stages: 
\begin{enumerate}
  \item A tree labeled by subsets: We first build a tree  $T_{\mathrm{set}}$ inductively where each node is a subset of $V_{[n+1]}$ as follows: The root is $V_{[n+1]}$ itself. Then, for each existing node $A$ at a level $i \in \{0,\cdots,k-2\}$,  we generate its children by selecting a partition uniformly at random from $\mathcal{P}(\hat{A})$. Each set in this partition becomes a child node of $A$.
  \item Mapping to an increasing tree $\mathcal{T}$: We convert the subset tree $T_{\mathrm{set}}$ into the final increasing tree $\mathcal{T}$. For each  node $A$ in $T_{\mathrm{set}}$, relabel it with its leader $v^{*}(A)$. Additionally, if this node $A$ is a leaf in $T_{\mathrm{set}}$,  we add every $v_{j}$ in $\hat{A}$ as a child of $v^{*}(A)$ in $\mathcal{T}$. 
\end{enumerate}
See Figure \ref{fig:hk-construction} for an illustration.
By the construction above, it is clear that $\mathcal{T}$ is an increasing tree on $V_{[n+1]}$ with height $k$ and the mapping $T_{\mathrm{set}} \to \mathcal{T}$ is an injection. 
Now it suffices to estimate the total number of $T_{\mathrm{set}}$.

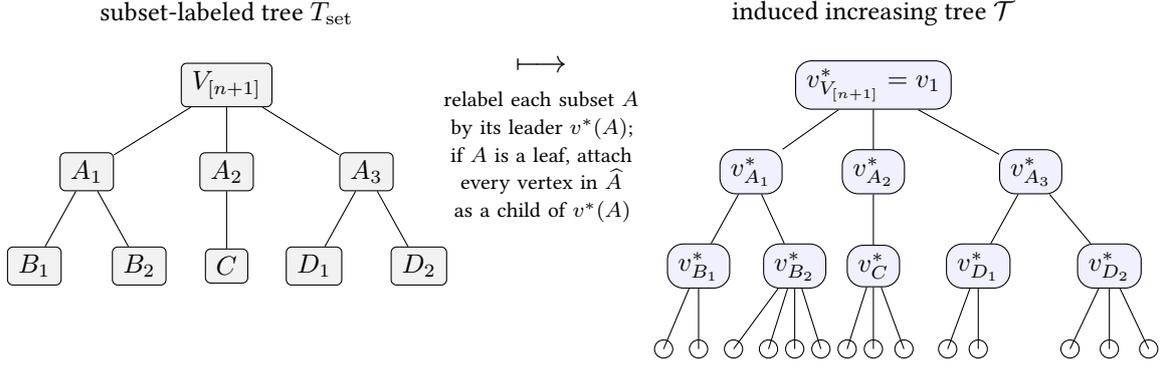
\begin{figure}[t]
  \centering
  \begin{tikzpicture}[
    x=1.15cm,
    y=1cm,
    every node/.style={font=\small},
    subset/.style={draw, rounded corners=2pt, fill=gray!10, inner xsep=4pt, inner ysep=3pt, align=center},
    leader/.style={draw, rounded corners=6pt, fill=blue!6, inner xsep=4pt, inner ysep=3pt, align=center},
    follower/.style={circle, draw, inner sep=0pt, minimum size=2.4mm}
  ]
    \node[align=center] at (1.8,5.2) {subset-labeled tree $T_{\mathrm{set}}$};
    \node[align=center] at (9.2,5.2) {induced increasing tree $\mathcal{T}$};

    \node[subset] (rootl) at (1.8,4.25) {$V_{[n+1]}$};
    \node[subset] (aone) at (0.2,3.1) {$A_1$};
    \node[subset] (atwo) at (1.8,3.1) {$A_2$};
    \node[subset] (athree) at (3.4,3.1) {$A_3$};
    \node[subset] (bone) at (-0.4,1.85) {$B_1$};
    \node[subset] (btwo) at (0.8,1.85) {$B_2$};
    \node[subset] (cone) at (1.8,1.85) {$C$};
    \node[subset] (done) at (2.8,1.85) {$D_1$};
    \node[subset] (dtwo) at (4.0,1.85) {$D_2$};

    \draw (rootl) -- (aone);
    \draw (rootl) -- (atwo);
    \draw (rootl) -- (athree);
    \draw (aone) -- (bone);
    \draw (aone) -- (btwo);
    \draw (atwo) -- (cone);
    \draw (athree) -- (done);
    \draw (athree) -- (dtwo);

    \node[font=\large] at (5.4,4.5) {$\longmapsto$};
    \node[align=center, font=\scriptsize, text width=3.2cm] at (5.4,3.3)
    {relabel each subset $A$ by its leader $v^{*}(A)$;\\if $A$ is a leaf, attach every vertex in $\hat{A}$ as a child of $v^{*}(A)$};

    \node[leader] (rootr) at (9.2,4.25) {$v^{*}_{V_{[n+1]}}=v_1$};
    \node[leader] (rone) at (7.8,3.1) {$v^{*}_{A_1}$};
    \node[leader] (rtwo) at (9.2,3.1) {$v^{*}_{A_2}$};
    \node[leader] (rthree) at (11.0,3.1) {$v^{*}_{A_3}$};
    \node[leader] (sone) at (7.2,1.85) {$v^{*}_{B_1}$};
    \node[leader] (stwo) at (8.3,1.85) {$v^{*}_{B_2}$};
    \node[leader] (sthree) at (9.2,1.85) {$v^{*}_{C}$};
    \node[leader] (sfour) at (10.4,1.85) {$v^{*}_{D_1}$};
    \node[leader] (sfive) at (11.9,1.85) {$v^{*}_{D_2}$};

    \draw (rootr) -- (rone);
    \draw (rootr) -- (rtwo);
    \draw (rootr) -- (rthree);
    \draw (rone) -- (sone);
    \draw (rtwo) -- (sthree);
    \draw (rone) -- (stwo);
    \draw (rthree) -- (sfour);
    \draw (rthree) -- (sfive);

    \foreach \x in {6.8,7.2} {
      \node[follower] at (\x,0.75) {};
      \draw (sone) -- (\x,0.75);
    }
    \foreach \x in {7.6,8.0,8.3,8.6} {
      \node[follower] at (\x,0.75) {};
      \draw (stwo) -- (\x,0.75);
    }
    \foreach \x in {8.9,9.2,9.55} {
      \node[follower] at (\x,0.75) {};
      \draw (sthree) -- (\x,0.75);
    }
    \foreach \x in {10.05,10.4} {
      \node[follower] at (\x,0.75) {};
      \draw (sfour) -- (\x,0.75);
    }
    \foreach \x in {11.45,11.9,12.35} {
      \node[follower] at (\x,0.75) {};
      \draw (sfive) -- (\x,0.75);
    }
  \end{tikzpicture}
    \caption{From subset labeled tree to increasing tree} \label{fig:hk-construction}
\end{figure}

We proceed level by level. For the root $V_{[n+1]}$, we have $ \mathcal{P}_{n} $
many ways to choose a partition in $\mathcal{P}(\hat{V}_{[n]})$. 
Moreover, for each $1 \leq i \leq k-1$, every node at the $i$-th level of $T_{\mathrm{set}}$, as a subset of $V_{[n]}$, contains at least $1+s^{(i)}(n)$ elements.  Here $s^{(i)}$ is the $i$-fold iteration of the $N \mapsto s(N)$. 
Let us consider those nodes that have size exactly $1+s^{(i)}(n)$. By induction, there are at least        
\begin{equation}
L_{i}(n)\coloneq \prod_{j=0}^{i-1} [ d(s^{(j)}(n)) -1 ] = \frac{n}{\ln^{(i)} (n) } \bigg[ 1+ O\Big(\frac{1}{ \ln^{(i)} (n)}\Big) \bigg]
\end{equation} 
many. Thus the total number of all possible configurations $T_{\mathrm{set}}$ is bounded from below by 
\begin{equation}
  \mathcal{P}_{n} \times \prod_{i=1}^{k-2} \ ( \mathcal{P}_{ s^{(i)}(n) } )  ^{ L_{i}(n) } .
\end{equation}
It follows from \eqref{eq:asy-P-N} that 
\begin{align}
 & \ln\big(   [ \mathcal{P}_{ s^{(i)}(n) } ]^{L_{i}(n)}  \big)
 =  L_{i}(n) \ln  \mathcal{P}_{s^{(i)}(n) } \\
 &=   \frac{n}{\ln^{(i)} (n) } \left[ 1+ O(\frac{1}{ \ln^{(i)} (n)}) \right] \ln^{(i)} (n) \left[ \ln^{(i+1)}(n) -  \ln^{(i+2)}(n)+O(1)\right] \\
 &= n \left[   \ln^{(i+1)}(n) -  \ln^{(i+2)}(n)+O(1)  \right].
\end{align}
We obtain that the number of all possible configurations for $T_{\mathrm{set}}$ is bounded from below by
\begin{equation}
  \exp \left(  n \ln n -  n \ln^{(k)} n + O_{k}(n) \right) .
\end{equation}
Therefore this is also a lower bound for $A_{k}(n+1)$. 
 This establishes \eqref{eq-counting-Akn}.
\end{proof}

\subsection{Proof of Proposition \ref{prop:lowerLDPup}} We first state two key lemmas that are needed in the proof. For each $k \ge 1$, let us denote 
\begin{equation}
  X_{k}(n) \coloneq \big| \big\{ 1  \le j \le n:  \mathrm{dist}(v_{1}, v_{j}) = k \big\} \big|  .
\end{equation}

\begin{lemma}\label{lem:kth-level-ldp}
  Fix any $k \in \mathbb{N}$ and fix any $\rho <1$. Then for sufficiently large $n$, we have 
  \begin{equation}
    \P( X_{k}(n)  \ge t ) \le  \exp \big(-\rho\, t \ln^{(k)} t \, \big) \ \text { for all } \  \frac{n}{\ln^{(k)} n } \le  t \le n .
  \end{equation} 
\end{lemma}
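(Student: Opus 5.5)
We prove the bound by induction on $k$. The key observation is that the level profile grows sequentially: when $v_{j+1}$ arrives it joins level $k$ if and only if its parent lies at level $k-1$, which happens with conditional probability $X_{k-1}(j)/j$. Hence
\[
X_k(n)=\sum_{j=1}^{n-1}\xi_j,\qquad \P\big(\xi_j=1\mid \mathcal{F}_j\big)=p_j\coloneq\frac{X_{k-1}(j)}{j},
\]
where $\mathcal{F}_j$ is the $\sigma$-field generated by $\T_j$ and $X_0\equiv 1$. This is a sum of conditionally Bernoulli variables with predictable parameters.

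The tool is the exponential supermartingale $\exp\big(\lambda\sum_{i<j}\xi_i-(e^\lambda-1)\sum_{i<j}p_i\big)$. Optional stopping gives, for any deterministic $\mu$,
\[
\P\Big(X_k(n)\ge t,\ \sum_{j<n}p_j\le\mu\Big)\le \exp\big(-\lambda t+(e^\lambda-1)\mu\big)=\Big(\frac{e\mu}{t}\Big)^{t}e^{-\mu}\le\Big(\frac{e\mu}{t}\Big)^t,
\]
after optimizing with $e^\lambda=t/\mu$.

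For the base case $k=1$ we have $p_j=1/j$, so $\mu=1+\ln n$. The bound becomes $\exp\big(-t\ln t+t\ln(e(1+\ln n))\big)$. Since $t\ge n/\ln n$ forces $\ln t=(1+o(1))\ln n$ while $\ln\ln n=o(\ln t)$, the exponent is at least $\rho\,t\ln t$ for large $n$.

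For the inductive step ($k\ge2$), fix $t\in[n/\ln^{(k)}n,\,n]$ and a threshold $s$. Since $X_{k-1}(j)$ is nondecreasing in $j$ and at most $j$, on the event $\{X_{k-1}(n)\le s\}$ we have
\[
\sum_{j<n}p_j\le\sum_{j<n}\frac{\min(s,j)}{j}\le s\big(1+\ln(n/s)\big)\eqqcolon\mu .
\]
We split
\[
\P(X_k(n)\ge t)\le\P\big(X_k(n)\ge t,\ X_{k-1}(n)\le s\big)+\P\big(X_{k-1}(n)>s\big),
\]
bounding the first term by $(e\mu/t)^t$ and the second by the induction hypothesis (applied with a slightly larger $\rho'\in(\rho,1)$).

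The threshold is $s=\max\{t/L,\ n/\ln^{(k-1)}n\}$ with $L\coloneq\ln^{(k-1)}t/(\ln^{(k)}t)^2$. The maximum keeps $s$ in the range where the induction hypothesis applies.
\begin{itemize}
\item \emph{First term.} We need $\ln(t/(e\mu))\ge(1-o(1))\ln^{(k)}t$. Because $t\ge n/\ln^{(k)}n$, we get $\ln(n/s)\le\ln(L\ln^{(k)}n)=O(\ln^{(k)}t)$. Also $t/s\ge\min\{L,\ t\ln^{(k-1)}n/n\}$, and the second entry is at least $\ln^{(k-1)}n/\ln^{(k)}n$. In both cases $\ln(t/s)\ge\ln^{(k)}t-O(\ln^{(k+1)}t)$. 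Therefore $\ln(t/\mu)\ge\ln(t/s)-\ln(1+\ln(n/s))\ge\ln^{(k)}t-O(\ln^{(k+1)}t)$.
\item \emph{Second term.} Since $\ln^{(k-1)}s=(1+o(1))\ln^{(k-1)}t$ (all quantities are $n^{1-o(1)}$), we get $s\ln^{(k-1)}s\gtrsim t(\ln^{(k)}t)^2\gg t\ln^{(k)}t$ when $s=t/L$. When $s=n/\ln^{(k-1)}n$ we get $s\ln^{(k-1)}s\approx n\ge t$, and in this regime $t\le n\ln^{(k)}n/\ln^{(k-1)}n$ up to constants. Either way, the induction hypothesis gives a contribution $\exp(-\omega(1)\,t\ln^{(k)}t)$, which is negligible.
\end{itemize}
Adding the two terms yields $\exp(-\rho\,t\ln^{(k)}t)$ for large $n$.

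The main obstacle is this second-case bookkeeping. One must verify that the choice of $L$ simultaneously makes $\ln(t/\mu)$ essentially $\ln^{(k)}t$, so that no constant factor is lost against $\rho<1$, and makes the inductive tail super-dominant. In particular, the boundary regime where $s=n/\ln^{(k-1)}n$ must be handled carefully, checking there that $t\ln^{(k)}t\le(1-\epsilon)\rho'\,s\ln^{(k-1)}s$. If this regime turns out to be tight, the first fix to try is a slightly larger $L$, e.g.\ $L=\ln^{(k-1)}t/\ln^{(k)}t$, trading a factor in the first term.
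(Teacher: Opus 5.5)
Your proof is correct. It reaches the same recursive Poisson-type bound as the paper by a different mechanism.

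The paper first dominates $X_1(n)$ by $1+\eta(\ln(n-1))$, using a Poisson embedding of the independent $\mathrm{Ber}(1/j)$ indicators. It then explores the tree down to level $k$ and uses that the subtrees rooted at level-$k$ vertices are conditionally independent random recursive trees. Aggregating their Poisson parameters by Jensen gives $X_{k+1}(n)\overset{\mathrm{st}}{\le} X_k(n)+\eta\big(X_k(n)\ln\frac{n-1}{X_k(n)}\big)$.

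You instead view level $k$ as being filled sequentially, with predictable rates $X_{k-1}(j)/j$, and apply the exponential supermartingale. Monotonicity of $j\mapsto X_{k-1}(j)$ bounds the compensator by $s(1+\ln(n/s))$. This is exactly the paper's Poisson parameter $s+s\ln(n/s)$, obtained without the subtree decomposition, the coupling, or Jensen. It is also all in one self-contained inequality. Note that Markov's inequality at the deterministic time $n-1$ suffices; no optional stopping is needed.

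The second difference is the threshold.
\begin{itemize}
\item The paper takes $t/s=\ln^{(k)}n/\ln^{(k+1)}n$. This puts both terms on the same scale $t\ln^{(k+1)}n$, so it needs a strictly decreasing sequence $\rho_k$ to absorb the factor $2$ at each step.
\item You take $t/s=\ln^{(k-1)}t/(\ln^{(k)}t)^2$. This costs only $O(\ln^{(k+1)}t)$ in the first exponent and makes the inductive tail $\exp(-c\,t(\ln^{(k)}t)^2)$ negligible, so no constant is lost per step.
\end{itemize}
You also work with $\ln^{(k)}t$ throughout rather than $\ln^{(k)}n$, which is why you needed (and correctly checked) that $\ln^{(k-1)}s=(1+o(1))\ln^{(k-1)}t$. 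What the paper's route buys in exchange is a genuine conditional stochastic-domination recursion between consecutive levels, which is a stronger structural statement than a tail bound.

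Finally, your worry about the boundary regime is unfounded: that case is vacuous. For $t\ge n/\ln^{(k)}n$ and large $n$ one has $t/L\ge (1+o(1))\,n\ln^{(k)}n/\ln^{(k-1)}n> n/\ln^{(k-1)}n$. So the maximum defining $s$ is always $t/L$, and the induction hypothesis applies directly. The fallback choice of $L$ you mention is not needed.
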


Lemma \ref{lem:rrt-heightk} yields that the order $t \ln^{(k)} t$ is the correct exponential decay rate  for the case $t \asymp n$. Moreover,  
together with Lemma \ref{lem:rrt-heightk}, this establishes an  LDP estimate for the $k$-th level set.  This result is of independent interest. 

 \begin{remark} 
  \label{cor:label}
  For any $k \in \mathbb{N}$,  
  \begin{equation}
   \lim_{n \to \infty} \frac{ -\ln \P( H_{n}= k)}{n \ln^{(k)} n } =  1.
  \end{equation}
\end{remark}

Recall that for  $1 \le j \leq m \le n$,  $  \T^{(j)}_{m,n} $ denotes the connected component of $\T_n$ that contains the vertex $v_j$  after removing all edges between vertices in $\{ v_i: i\in [m]\}$. 
 Let  $J_{m,n} $ denote those trees whose sizes are less than $  \theta\frac{n}{m}$: 
\begin{equation}
    J_{m,n}(\theta)\coloneq  \big\{ 1 \leq j \leq m:  |\T_{m,n}^{(j)} | < \theta \frac{n}{m}  \big\} 
\end{equation} 

\begin{lemma}\label{lem:subtree-size-2}
  For any fixed $\theta \in (0,1)$ and $1\ge \gamma > p_{\theta}> 1- e^{-\theta}$. Let $a_{n}$ be an arbitrary increasing sequence tending to infinity.  Then for all $n , m$ large enough and     $ m \le n/a_{n}$,
  \begin{equation}
        \P(   | J_{m,n}(\theta) |  \ge  \gamma  m ) \le  \exp \big(- m\mathsf{D}( \gamma \| p_{\theta} )  \, \big) 
  \end{equation}
   where $\mathsf{D}( \gamma \| p) \coloneq \gamma \ln \frac{\gamma}{p}+	 (1-\gamma) \ln \frac{1-\gamma}{1-p}$ is the binary relative entropy function.
\end{lemma}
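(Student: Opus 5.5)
We prove Lemma~\ref{lem:subtree-size-2} using the exact law of the sizes obtained in the proof of Lemma~\ref{lem:subtree-size-1}: the vector $(|\T_{m,n}^{(j)}|)_{j=1}^m$ is uniformly distributed on the set of compositions $\mathfrak{C}_{n,m}$. We then use the standard representation of a uniform composition as i.i.d.\ geometric variables conditioned on their sum. Fix a parameter $p\in(0,1)$ and let $G_1,\dots,G_m$ be i.i.d.\ with $\P(G=\ell)=p(1-p)^{\ell-1}$ for $\ell\ge 1$. The probability $\P(G_j=n_j\ \forall j)=p^m(1-p)^{n-m}$ depends only on $\sum_j n_j$. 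Hence, conditionally on $S_m\coloneq\sum_j G_j=n$, the vector $(G_j)_j$ is uniform on $\mathfrak{C}_{n,m}$. We choose $p=m/n$, so that $\E G=n/m$.

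We then get
\[
\P(|J_{m,n}(\theta)|\ge\gamma m)=\frac{\P\big(\#\{j:G_j<\theta n/m\}\ge\gamma m,\ S_m=n\big)}{\P(S_m=n)}\le\frac{\P\big(\mathrm{Bin}(m,\hat p)\ge\gamma m\big)}{\P(S_m=n)},
\]
where $\hat p\coloneq\P(G<\theta n/m)=1-(1-m/n)^{\lceil\theta n/m\rceil-1}$. Since $m\le n/a_n$, we have $n/m\to\infty$, and therefore $\hat p\to 1-e^{-\theta}<p_\theta$. For $n$ large this gives $\hat p\le p_\theta$. The Chernoff bound for binomials then yields $\P(\mathrm{Bin}(m,\hat p)\ge\gamma m)\le \exp(-m\mathsf D(\gamma\|\hat p))\le\exp(-m\mathsf D(\gamma\|p_\theta))$, using that $\mathsf D(\gamma\|\cdot)$ is decreasing on $(0,\gamma]$. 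The denominator is explicit: $\P(S_m=n)=\binom{n-1}{m-1}p^m(1-p)^{n-m}$, and by Stirling (local CLT for a negative binomial at its mean) it is $\asymp 1/\sqrt{m\,\Var(G)}\asymp \frac{m}{n}\cdot\frac{1}{\sqrt m}\cdot\sqrt{m}$. More simply, $\P(S_m=n)\ge c\,(m/n)/\sqrt m\cdot\sqrt m$, so it is at least of order $\sqrt m/n$. Thus the denominator costs only a factor of order $n/\sqrt m$, that is, $e^{O(\ln n)}$.

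The main obstacle is removing this polynomial prefactor, because the claimed bound has no slack written in. The plan is to exploit the strict inequality $p_\theta>1-e^{-\theta}$. Choose an intermediate $p'\in(1-e^{-\theta},p_\theta)$; for large $n$ we have $\hat p\le p'$. This gives the bound $(n/\sqrt m)\,C\exp(-m\mathsf D(\gamma\|p'))$. Because $\mathsf D(\gamma\|p')>\mathsf D(\gamma\|p_\theta)$ strictly (when $\gamma>p_\theta$), the gap $\varepsilon m$ with $\varepsilon=\mathsf D(\gamma\|p')-\mathsf D(\gamma\|p_\theta)>0$ absorbs the prefactor, provided $\varepsilon m\ge \ln(Cn)$. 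This holds as long as $m\gg\ln n$.

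Two regimes remain to be checked. First, if $m$ is only required to be large (possibly $m\lesssim\ln n$), the $\ln n$ prefactor is too costly. In that case I would instead condition more cleverly: replace the global event $\{S_m=n\}$ by $\{S_m\in[n,n+n/m]\}$, or use exchangeability to bound via blocks. Alternatively, I would note that the intended use only needs $m$ of polynomial order $n^{1-\alpha}$, and state the needed uniformity accordingly. Second, the edge case $\gamma=1$ is handled directly, since $\mathsf D(1\|p)=\ln(1/p)$ and the same argument applies. The more natural refinement to avoid the $\ln n$ loss is to work in the geometric representation with $p$ slightly larger than $m/n$. Then $S_m\le n$ is typical, and one can use monotonicity: the event $\{|J|\ge\gamma m\}$ is decreasing in the sizes. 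With this, the conditional law given $S_m=n$ can be compared to the conditional law given $S_m\le n$, whose probability is bounded below by a constant, which avoids the polynomial prefactor entirely.
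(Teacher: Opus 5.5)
Your geometric-representation route differs from the paper's and can be made to work. However, the argument you actually carry out does not prove the lemma as stated. Dividing by $\P(S_m=n)\asymp \sqrt m/n$ costs a factor of order $n/\sqrt m$. The slack $m[\mathsf D(\gamma\|p')-\mathsf D(\gamma\|p_\theta)]$ absorbs this only when $m\gtrsim \ln n$. The lemma is claimed for all $m\ge M$, $n\ge N$ with $m\le n/a_n$, so $m$ may stay bounded while $n\to\infty$, and the paper's proof covers this regime. Restricting to polynomially large $m$ would suffice for Proposition~\ref{prop:lowerLDPup}, where $m_n=n^{1-\alpha}(\ln n)^{-3/(2e)}$, but that changes the statement.

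Your first suggested repair also points the wrong way. The set $A=\{c:\#\{j:c_j<\theta n/m\}\ge\gamma m\}$ is decreasing. Conditioning on $S_m=k$ with $k>n$ can only make $A$ less likely, so conditioning on $\{S_m\in[n,n+n/m]\}$ gives a lower bound at $k=n$, not an upper bound.

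Your last idea is the right fix, but its key step is asserted rather than proved. That step is: $\P(C_n\in A)\le \P(C_k\in A)$ for $m\le k\le n$, where $C_k$ is uniform on $\mathfrak{C}_{k,m}$. It follows from the P\'olya urn in the proof of Lemma~\ref{lem:subtree-size-1}: the process $k\mapsto(|\T^{(j)}_{m,k}|)_{j\le m}$ is coordinatewise nondecreasing and is uniform on $\mathfrak{C}_{k,m}$ at each time $k$. Hence
$\P((G_j)_j\in A,\,S_m\le n)=\sum_{k=m}^{n}\P(C_k\in A)\,\P(S_m=k)\ge \P(C_n\in A)\,\P(S_m\le n)$.

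Now take $p=(1+\delta)m/n$, with $\delta$ small enough that $1-e^{-(1+\delta)\theta}<p'<p_\theta$. Note that $\hat p$ now tends to $1-e^{-(1+\delta)\theta}$, not $1-e^{-\theta}$. Then:
\begin{itemize}
\item Markov's inequality gives $\P(S_m\le n)\ge \delta/(1+\delta)$.
\item For large $n$ one has $\hat p\le p'$.
\item The constant $(1+\delta)/\delta$ is absorbed by the $\mathsf D$-gap once $m$ is large.
\end{itemize}
This yields the lemma uniformly in $m$, with no $\ln n$ loss.

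For comparison, the paper never passes to independent variables. It verifies $\P(Y_j=1\ \forall j\in S)\le p_\theta^{|S|}$ by induction on $|S|$. The explicit tail $P_{m,n}(t)$ shows that conditioning the other components to be small makes the remaining one stochastically larger. It then applies the Impagliazzo--Kabanets bound. Your completed route replaces that negative-dependence computation with exact independence of geometrics plus one monotone coupling, and it needs only the classical binomial Chernoff bound. The price is that you must spend part of the slack $p_\theta>1-e^{-\theta}$ absorbing a constant prefactor. The paper uses that slack only for the one-point estimate $\P(Y_j=1)\le p_\theta$.
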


\begin{proof}[Proof of Proposition \ref{prop:lowerLDPup} admitting Lemmas \ref{lem:subtree-size-2} and \ref{lem:kth-level-ldp}]
 Define $\mathcal{F}^{\mathrm{t}}_{m}\coloneq \sigma(\mathcal{T}_{m} ,(|\T_{m,n}^{(j)}|)_{j=1}^m )$ and  recall that $\mathsf{F}(n,x)\coloneq \P(H_{n} \leq x)$. Since conditionally on $\mathcal{F}^{\mathrm{t}}_{m}$,  the trees  $(\T_{m,n}^{(j)} , 1 \leq j \leq m)$ are independent random recursive trees on $|\T_{m,n}^{(j)} |$ vertices,   we have 
 \begin{align}
  \P \left(  H_{n} \leq \alpha e\ln n \mid \mathcal{F}^{\mathrm{t}}_{m} \right) &=
  \P \left(  H( |\T_{m,n}^{(j)} | )\leq \alpha e\ln n - \mathrm{dist}(v_{1},v_{j}) , \forall 1 \leq j\leq m  \mid \mathcal{F}^{\mathrm{t}}_{m} \right)  \\
  & = \prod_{j=1}^{m}  \mathsf{F}( |\T_{m,n}^{(j)} |, \alpha e\ln n - \mathrm{dist}(v_{1},v_{j})) .
\end{align} 
Fix a small constant $\theta \in (0,1)$ and take a large integer $k$ such that   $ e\ln(1/\theta) \le k/3$.  Let 
\begin{equation}
   I_{m,n}(\theta,k)\coloneq \left\{  j \in  [m] \setminus J_{m,n}(\theta) : \mathrm{dist}(v_{1},v_{j}) \ge k \right\} .
\end{equation}
Since  $\mathsf{F}( n, x ) $ is decreasing in $n$ and increasing in $x$, we deduce
\begin{equation}\label{eq:cond-bnd}
  \P \left(  H_{n} \leq \alpha e \ln n \right)    
  \leq \E \bigg[ \mathsf{F} \big( \theta\frac{n}{m}, \alpha e\ln n - k \big)^{|I_{m,n}(\theta,k)|}   \bigg]  .
\end{equation} 
 Then  take $ m=m_{n} =  n^{1- \alpha } (\ln  n)^{-3 /(2e)} $   so that $ \theta n/(m) =  \theta  n^{\alpha} (\ln n)^{3 /(2e) }$. 
According to  \eqref{eq-ABF-EH-bound}, we have 
\begin{align}
  \E[ H_{ \theta n/m_{n}  }]
  & = e \ln \left(  \theta n^{\alpha} (\ln  n)^{3  /2e} \right) - \frac{3}{2}  \ln(   \ln n^{\alpha+o(1)} ) + O(1)  \\
  & = \alpha e \ln n - e \ln (1/\theta) - \frac{3}{2} \ln \alpha + O(1), 
\end{align}
The tightness of $(H_{n}- \E[H_{n}])_{n \ge 1}$ yields that if we define $\delta(t)\coloneq  \sup_{n} \P(|H_{n}- \E[H_{n}]|>t) $ then $\delta(t) \to 0$ as $t \to \infty$. Thus provided  $ e\ln(1/\theta) \le k/3$ and $\theta$ is small,  
 \begin{align}
 \mathsf{F} \big( \theta\frac{n}{m_n}, \alpha e\ln n - k \big)   & \le    \P\left( H_{ \theta n/m_{n} } \leq   \E[  H_{ \theta n/m_{n} } ] +O(1) +  e \ln (1/\theta) -k \right) \\
 & \le \delta (k/2).
 \end{align}  
 Plugging this back into \eqref{eq:cond-bnd} we  obtain 
 \begin{equation}
   \P \left(  H_{n} \leq \alpha e \ln n \right)  \le \E \big[ \delta(k/2)^{|I_{m_{n},n}(\theta,k)|}   \big].
 \end{equation}
Observe that $ |I_{m_{n},n}(\theta,k)| \ge  m_n-|J_{m_{n},n}(\theta)|- \sum_{j=0}^{k-1} X_{j}(m_n)$.
It then follows from  Lemmas \ref{lem:kth-level-ldp} and \ref{lem:subtree-size-2} (with $p_{\theta}$ chosen to be $\theta > 1- e^{-\theta}$)  that for large $n$
 \begin{align}
  \P(|I_{m_{n},n}(\theta,k)|  \le  m_{n} /2 ) &\le  \P \biggl(  \sum_{j=0}^{k-1} X_{j}(m_{n}) \ge m_{n} /4 \biggr) + \P(| J_{m_{n},n}(\theta)| \ge m_{n} /4 )   \\
  & \le k\exp(- \tfrac{1}{8} m_{n} \ln^{(k)} m_{n} ) + \exp  ( -m_{n} \, \mathsf{D}( \tfrac{1}{4} \| \theta )   ) .
 \end{align} 
 Finally we conclude that  
 \begin{align}
   \P \left(  H_{n} \leq \alpha e \ln n \right)  \le \delta(k/2)^{m_{n}/2} + k\exp(- \tfrac{1}{8} m_{n} \ln^{(k)} m_{n} ) + \exp (- m_{n} \, \mathsf{D}( \tfrac{1}{4} \| \theta )   ) .
 \end{align}
 Notice that $\mathsf{D}( \frac{1}{4} \| \theta )  \to \infty$ as $\theta \downarrow 0$. The desired result is obtained by taking the negative logarithm of both sides, dividing by $m_{n}$, and sequentially taking the limits as $n \to \infty$, $k \to \infty$, and finally $\theta \downarrow 0$.
\end{proof}

\begin{proof}[Proof of Lemma \ref{lem:kth-level-ldp}] In the following, let $(\eta, \eta_{j}(t): j\ge 1)$ denote a sequence of i.i.d. Poisson processes on $(0,\infty)$ with unit intensity.  

  \underline{\textit{Step 1.}}
Let us analyze the simplest case $k=1$ first. Note that 
  \begin{equation}
    X_{1}(n) = \sum_{j=2}^{n} \ind{ v_{j} \text{ is connected to }  v_{1} } \overset{d}{=}  1+ \sum_{j=2}^{n-1} \xi_{j}
  \end{equation}
  where $(\xi_{j})_{j \ge 2}$ is a sequence of independent Bernoulli random variables  and $\xi_{j} \sim \mathrm{Ber}(1/j)$. Set 
  $\lambda_{j} \coloneq \ln \frac{j}{j-1}$.  Then we can realize $(\xi_{j})$ by setting 
  \begin{equation}
    \xi_{j} \coloneq  \ind{\eta_{j}(\lambda_{j} ) \ge 1} .
  \end{equation} 
In particular, since  $   \sum_{j=2}^{n-1} \eta_{j}(\lambda_{j}) $ has the same distribution as $\eta(\ln(n-1))$, we obtain
\begin{equation}
  \label{eq:X-1-Poi}
  X_{1}(n)  \overset{\mathrm{st}}{\le} 1 + \eta(\ln(n-1)) .
\end{equation} 
We shall apply the following well-known   Chernoff's bound for Poisson distribution (see e.g. \cite[Theorem 5.4]{MU05}). For all $x > \lambda $,
  \begin{equation}\label{Poi-concentration}
    \P( \eta(\lambda) > x ) \le e^{-\lambda} \frac{ (\lambda e )^{x}}{x^{x}} \le \exp  (   -x [\ln x - \ln \lambda -1]  ).
  \end{equation}
Thus we obtain, for all $ \sqrt{n} \le t \le n$,
\begin{equation}
  \label{eq:tailk=1}
\P ( X_{1}(n) \ge t ) \le \exp\big( - (t-1)[ \ln t - \ln^{(2)} n -1 ] \big) .
\end{equation}

\underline{\textit{Step 2.}}
We now return to the general case.
First, we have the following recursive relation: 
\begin{equation}
  X_{k+1} (n )  \overset{\mathrm{d}}{=} \sum_{j=1}^{X_{k}(n)} X^{(j)}_{1} (N_{k}(j)) 
\end{equation}
where $(X^{(j)}_{1}(\cdot) : j \ge 1)$ are i.i.d. copies of  $X_1$ and are independent of $(X_k(n), N_{k}(j): 1\le j \le X_k(n))$.
 To see this, one can explore the tree $\mathcal{T}_{n}$ up to level $k$.  We denote by $T_k(j)$ 
 the subtree rooted at the node with the $j$-th smallest label in level $k$; let $V_k(j)$ be the set of vertices of  $T_k(j)$ and   $N_{k}(j) = | V_k(j) |$.  Let $\mathcal{G}_k$ denote the $\sigma$-field generated by 
  the first $k$ level of $\mathcal{T}_n$ and $V_k(j) :  1 \le j \le X_k(n)$,  $T_k(j)$.    Then from the construction of the random recursive tree, given $\mathcal{G}_k$, $T_k(j)$ are independent random recursive trees on $V_k(j)$.
 Now by applying \eqref{eq:X-1-Poi}, we obtain that conditionally on $\mathcal{G}_k$,
\begin{align}
    X_{k+1} (n )  &\overset{\mathrm{st}}{\le} \sum_{j=1}^{X_{k}(n)} \big[   1+ \eta_{j}( \ln (N_k(j) ) ) \big] = X_{k}(n) + \eta\Big( \sum_{j=1}
    ^{X_{k}(n) }\ln (N_k(j)) \Big) \\
   & \overset{\mathrm{st}}{\le} X_{k}(n) + \eta\Big(  X_{k}(n) \ln \frac{n-1}{X_{k}(n)} \Big) . 
\end{align}
Above we have used Jensen's inequality $\sum_{j=1}^{m} \ln (a_j) \le m \ln (\sum_{j=1}^{m} a_{j}/m )$, as well as  the fact $\sum_{j=1}^{X_k(n)} N_k(j) \le n-1$.  
 In particular, we get for all  $t>0$,
 \begin{equation}\label{eq:recursive-bound}
  \P(   X_{k+1} (n )  > t \mid \mathcal{G}_{k} ) \le \P\Big(  X_{k}(n) + \eta \big(  X_{k}(n) \ln \frac{n-1}{X_{k}(n)} \big)   > t  \mid X_{k}(n) \Big) .
 \end{equation}

 \underline{\textit{Step 3.}}
Fix an arbitrary sequence $(\rho_{j})$ satisfying $1>\rho_{j}> \rho_{j+1}>\rho$ for all $j \ge 1$.   We use induction to show the following claim: For every $k \ge 1$ and for sufficiently large $n$, we have 
 \begin{equation}\label{eq-desired-bound}
  \P( X_{k}(n) \ge t ) \le \exp\Big( - \rho_{k} \, t \ln^{(k)} n \Big) \ \text{ for all } \
 \frac{n}{\ln^{(k)} n} \le t \le n. \end{equation} 
 Then Lemma \ref{lem:kth-level-ldp} follows directly since $ \ln^{(k)} t \sim \ln^{(k)} n$ uniformly in $ \frac{n}{\ln^{(k)} n} \le t \le n$. 

 The initial case $k=1$ follows directly from \eqref{eq:tailk=1}. Assume \eqref{eq-desired-bound} holds for $k$. Choose an arbitrary
 \begin{equation}
  \frac{n}{\ln^{(k+1)} n} \le t \le n   \quad  \text{ and  define } \quad   s \coloneq  t \, \frac{\ln^{(k+1)} n}{\ln^{(k)} n} .
 \end{equation}
 Then $ \frac{n}{\ln^{(k)} n} \le s \le n \frac{\ln^{(k+1)} n}{\ln^{(k)} n} = o(n)$. 
Combining \eqref{eq:recursive-bound} with the fact  that $x \mapsto x \ln \frac{n}{x}$ is increasing in $[1, \frac{n}{e}]$, we obtain 
\begin{equation}\label{eq:induc-kplus1}
   \P(   X_{k+1} (n )  > t  ) \le  \P( X_{k}(n) > s  ) + \P\Big(  s + \eta \big(  s \ln \frac{n}{s} \big)   > t   \Big) .
\end{equation}
By using the induction hypothesis, since $s \ln^{(k)} n  = t \ln^{(k+1)} n $  we obtain
\begin{equation}\label{eq:induc-kplus2}
 \P( X_{k}(n) > s  ) \le \exp(- \rho_{k} s \ln^{(k)} n) = \exp(- \rho_{k} t \ln^{(k+1)} n ).
\end{equation} 
Next we apply \eqref{Poi-concentration} to bound the last term in \eqref{eq:induc-kplus1}, by first checking that $s \ln \frac{n}{s} \ll t-s \Leftrightarrow \ln \frac{n}{s} \ll \frac{t}{s} -1 =\frac{\ln^{(k)} n}{\ln^{(k+1)} n} -1$. Use the lower bound on $s$ we have $\ln \frac{n}{s} \le \ln^{(k+1)} n $. Thus \eqref{Poi-concentration}  yields
\begin{align}
   \P\Big(   \eta \big(  s \ln \frac{n}{s} \big)   > t -s  \Big)  & \le \exp \Big( -(t-s) \big[ \ln \frac{t-s}{s \ln \frac{n}{s}} -1\big]\Big) \\
   &= \exp \Big( -(1-s/t)\, t \big[ \ln \frac{t/s-1}{ \ln \frac{n}{s}} -1\big]\Big)  \\
   &= \exp \Big( -[1+ o_{n}(1)]  t \ln^{(k+1)} n \Big)   . \label{eq:induc-kplus3}
\end{align}
Combining \eqref{eq:induc-kplus2} and \eqref{eq:induc-kplus3} with  \eqref{eq:induc-kplus1}, since $\rho_{k+1}$ is strictly less than $\rho_k$ the desired inequality \eqref{eq-desired-bound} for $k+1$ follows. This completes the proof.
\end{proof}

The proof of Lemma \ref{lem:subtree-size-2} relies on the following concentration inequality for negatively associated Bernoulli random variables:

\begin{lemma}[{\cite[Theorem 1]{IK10}}] \label{generalized Chernoff bound}
Let $\xi_1, \ldots, \xi_n$ be Boolean random variables such that, for some $p \in (0,1)$, we have that, for every subset $S \subseteq[n], \P\left( \wedge_{i \in S} \xi_{i}=1\right) \le p^{|S|}$. Then, for any $ \gamma \in(p ,1]$, 
\[ \P\Big(\sum_{i=1}^n \xi_i \ge \gamma n\Big) \le \exp(-n \mathsf{D}(\gamma \| p) ) . \]
%  where $\mathsf{D}( \gamma \| p) \coloneq \gamma \ln \frac{\gamma}{p}+	 (1-\gamma) \ln \frac{1-\gamma}{1-p}$ is the binary relative entropy function.
\end{lemma}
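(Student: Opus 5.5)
The plan is to reduce to the standard Chernoff argument for independent Bernoulli($p$) variables. The hypothesis $\P(\wedge_{i\in S}\xi_i=1)\le p^{|S|}$ gives exactly the control of the exponential moment of $Y\coloneq\sum_{i=1}^n\xi_i$ that independence would give. The key observation is that, since each $\xi_i\in\{0,1\}$, for any $\lambda\ge 0$ we have $e^{\lambda\xi_i}=1+(e^\lambda-1)\xi_i$. Expanding the product gives
\begin{equation}
 e^{\lambda Y}=\prod_{i=1}^n\bigl(1+(e^\lambda-1)\xi_i\bigr)=\sum_{S\subseteq[n]}(e^\lambda-1)^{|S|}\prod_{i\in S}\xi_i .
\end{equation}
All coefficients $(e^\lambda-1)^{|S|}$ are nonnegative, and $\E\prod_{i\in S}\xi_i=\P(\wedge_{i\in S}\xi_i=1)\le p^{|S|}$. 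Hence
\begin{equation}
 \E e^{\lambda Y}\le\sum_{S\subseteq[n]}\bigl((e^\lambda-1)p\bigr)^{|S|}=\bigl(1-p+pe^\lambda\bigr)^n ,
\end{equation}
which is precisely the moment generating function of a $\mathrm{Bin}(n,p)$ variable.

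Next I apply Markov's inequality: $\P(Y\ge\gamma n)\le e^{-\lambda\gamma n}(1-p+pe^\lambda)^n$ for every $\lambda\ge0$. Then I optimize over $\lambda$ exactly as in the classical Chernoff--Hoeffding bound. For $\gamma\in(p,1)$ the minimizer is
\begin{equation}
 e^{\lambda}=\frac{\gamma(1-p)}{p(1-\gamma)}>1 .
\end{equation}
This choice is admissible ($\lambda>0$) precisely because $\gamma>p$. Substituting it, a short computation gives $1-p+pe^\lambda=\frac{1-p}{1-\gamma}$. Therefore
\begin{equation}
 e^{-\lambda\gamma}\bigl(1-p+pe^\lambda\bigr)=\Bigl(\frac{p}{\gamma}\Bigr)^{\gamma}\Bigl(\frac{1-p}{1-\gamma}\Bigr)^{1-\gamma}=e^{-\mathsf{D}(\gamma\|p)} ,
\end{equation}
which yields the claimed bound after raising to the $n$-th power.

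The boundary case $\gamma=1$ is handled directly. The event $\{Y\ge n\}$ is $\{\wedge_{i\in[n]}\xi_i=1\}$, whose probability is at most $p^n=e^{-n\mathsf{D}(1\|p)}$, with the convention $0\ln 0=0$. Alternatively, one can let $\lambda\to\infty$ in the Markov bound.

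There is no real obstacle here. The only point requiring care is the expansion step: it uses the Boolean nature of the $\xi_i$ and the nonnegativity of $e^\lambda-1$, which is why we restrict to $\lambda\ge0$, and hence to $\gamma>p$. The rest is the routine optimization of the binomial Chernoff bound.
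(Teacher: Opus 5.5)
Your proof is correct and complete. The identity $e^{\lambda\xi_i}=1+(e^\lambda-1)\xi_i$, the subset expansion with nonnegative coefficients for $\lambda\ge 0$, the bound $\E e^{\lambda Y}\le(1-p+pe^\lambda)^n$, the optimizer $e^\lambda=\gamma(1-p)/(p(1-\gamma))$, and the separate treatment of $\gamma=1$ all check out.

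The paper gives no proof of this lemma; it imports it verbatim from \cite[Theorem 1]{IK10}. Impagliazzo and Kabanets avoid moment generating functions and run a random experiment instead. They choose $S\subseteq[n]$ by keeping each index independently with probability $q\in[0,1]$, and estimate the probability of $\{\wedge_{i\in S}\xi_i=1\}$ in two ways:
\begin{itemize}
\item from above by $(1-q+qp)^n$, by averaging the hypothesis over $S$;
\item from below by $(1-q)^{(1-\gamma)n}\,\P(Y\ge\gamma n)$, since on $\{Y\ge\gamma n\}$ the set $S$ only has to avoid at most $(1-\gamma)n$ indices with $\xi_i=0$.
\end{itemize}

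This is your computation in different clothing. Since $(1-q)^{1-\xi_i}=1-q+q\xi_i$ for Boolean $\xi_i$, their averaged probability equals $\E(1-q)^{n-Y}$. Under the substitution $1-q=e^{-\lambda}$ this is $e^{-\lambda n}\E e^{\lambda Y}$. Their upper estimate is then exactly your moment-generating-function bound, their lower estimate is your Markov step, and their optimal $q=(\gamma-p)/(\gamma(1-p))$ is your optimal $\lambda$.

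What each route buys: yours is the shortest path for a reader who already knows the binomial Chernoff bound. The random-subset phrasing is what makes the argument "constructive" in the sense IK10 care about. It also absorbs the case $\gamma=1$ through the admissible endpoint $q=1$, where you need an extra line or the limit $\lambda\to\infty$.
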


\begin{proof}[Proof of Lemma \ref{lem:subtree-size-2}] Without loss of generality let us assume $\theta \frac{n}{m}$ is an integer. Otherwise consider $ \lfloor \theta \frac{n}{m} \rfloor$.
We shall apply Lemma \ref{generalized Chernoff bound} and hence first rewrite $
 | J_{m,n}(\theta) |$ as the  sum of indicators:
 \begin{equation}
  | J_{m,n}(\theta) | = \sum_{ j=1}^{m} Y_{j} \ \text{ with } \ Y_{j}\coloneq\ind{|\T_{m,n}^{(j)} | < \theta  \frac{n}{m} } .
 \end{equation}
Recall that $ p_{\theta} \in ( 1-e^{-\theta},1)$.
 We claim that, for sufficiently large $n$ and any $m \le \frac{ n}{a_{n}}$, 
  the negative association condition holds:
 \begin{equation}\label{eq-induction-hypothesis}
  \P(  Y_{j}=1, \forall j \in S ) \leq p_{\theta}^{|S|}  \ \text{ for any } \  S \subset [m].
 \end{equation}
Then applying Lemma \ref{generalized Chernoff bound} the desired result follows.

  We proceed by induction, beginning with showing that for any $1 \leq j \leq m$,
 \begin{equation}\label{eq-induction-initial}
  \P(  Y_{j}=1 )= 1-\P\left(  |\T^{(j)}_{m,n}|  \geq \theta\frac{n}{m}   \right)  \leq p_{\theta} .
 \end{equation}
 It follows from the joint distribution of $ (|\T_{m,n}^{(j)} |: 1 \leq j \leq m) $ in \eqref{eq:subtree-size-distri} and the stars and bars argument that for any  positive integer  $k$ satisfying $ k+ m \leq n$,   we have,
  \begin{align}
   \P( |\T^{(j)}_{m,n}|  \geq 1+k    )  &   =  \frac{ \binom{n-k-1 }{m-1} }{\binom{n-1}{m-1}} 
    =  \prod_{j=1}^{m-1} \frac{n-j-k}{n-j}
     \\
     & = \prod_{j=1}^{m-1} \Big( 1-  \frac{k}{n-j} \Big)  \eqcolon P_{m,n}(k)\label{eq-cdf-T-1-mn}.
  \end{align}
  Taking $k= \theta\frac{n}{m}-1$ we obtain  $\P( |\T^{(j)}_{m,n}|  \geq \theta\frac{n}{m}  ) = [1+o_{n} (1)] (1-\frac{\theta}{m})^{m-1}$, as $m \le \frac{ n}{a_{n}} = o(n)$. Thus   \eqref{eq-induction-initial} follows provided $m$ is large enough.

  Next, assume \eqref{eq-induction-hypothesis} holds for all subsets  $S\subset [m]$ with $|S| \leq r \le m-1$. Due to exchangeability of  $(Y_{j})_{j=1}^{m}$, it suffices to verify for $S'=\{1\} \cup S$ where $1\notin S$ and $|S|=r$.  Actually it is enough to show, for any sequence $(\ell_j)_{j \in S}$ with $1 \leq \ell_j < \theta\frac{n}{m}$, that
  \begin{equation}\label{eq-negative-association}
    \P\big(  |\T^{(1)}_{m,n}| \geq \theta\frac{n}{m}   \mid |\T^{(j)}_{m,n}| = \ell_{j},  j \in S \big) \geq    \P\big(  |\T^{(1)}_{m,n}| \geq \theta\frac{n}{m} \big).
  \end{equation}  
 Given this, it follows immediately that 
\begin{align}
  & \P( |\T^{(j)}_{m,n}| < \theta \frac{n}{m}, j \in S') \\
  &=   \sum_{(\ell_j)}  \P(  |\T^{(1)}_{m,n}|  < \theta  \frac{n}{m}   \mid  |\T^{(j)}_{m,n}| = \ell_{j},  j \in S )\P(  |\T^{(j)}_{m,n}| = \ell_{j},  j \in S)\\
  & \leq      \P(  |\T^{(1)}_{m,n}|    < \theta \frac{n}{m} ) \P( |\T^{(j)}_{m,n}|   < \theta \frac{n}{m}, j \in S)\leq p_{\theta}^{|S'|} .
\end{align}
In the last inequality we have used  the induction hypothesis. 

It remains to prove \eqref{eq-negative-association}.
Notice that,
given $ |\T^{(j)}_{m,n}| = \ell_{j}$ for all $  j \in S  $, 
the conditional distribution of $   |\T^{(1)}_{m,n}| $ is the same as the (unconditional) distribution of  $   |\T^{(1)}_{m-r,n-L}| $ with $L\coloneq\sum_{j \in S}\ell_j \leq r  \frac{\theta n}{m}  $.  Thus, together with   \eqref{eq-cdf-T-1-mn}, we only need to verify that  
for sufficiently large $n$, $m \le n/a_{n}$, and for any $0 \leq t \leq  \theta\frac{n}{m}-1$,
\begin{equation}
 P_{m,n}(t) \le  P_{m-r,n-L}(t).
\end{equation}
We again employ induction on the variable $t$ to prove this inequality. For $t=0$, we trivially have $ P_{m,n}(t) = 1 = P_{m-r,n-L}(t) $. Assume the inequality holds for $t-1$. If we can show 
\begin{equation}\label{eq-stoc-domination}
  \frac{P_{m,n}(t)}{P_{m,n}(t-1)} \leq  \frac{P_{m-r,n-L}(t)}{P_{m-r,n-L}(t-1)}
\end{equation}
for any $t< \theta\frac{n}{m}$, then together with the induction hypothesis, the desired inequality follows. We compute that 
\begin{equation}
  \frac{P_{m,n}(t)}{P_{m,n}(t-1)} =  \frac{ \prod_{j=1}^{m-1}  {n-j-t} }{ \prod_{j=1}^{m-1}  {n-j-t+1} } = 1- \frac{ (m-1) }{n-t} .
\end{equation}
Thus \eqref{eq-stoc-domination} is equivalent to:
\begin{align}
& \frac{ (m-1)}{n-t} \ge \frac{ (m-r-1)}{n-L-t} 
    \Leftrightarrow (n-L-t)  (m-1) \ge (m-1-r) (n-t) \\
   &\ \Leftrightarrow (n-L-t) r  \ge L (m-1-r)   \Leftrightarrow (n-L-t)   \ge \frac{L}{r} (m-1-r) . 
\end{align}
Now, using  $\frac{L}{r} \leq  \frac{\theta n}{m}$ and $t < \theta\frac{n}{m}$,  we obtain  
$\frac{L}{r}(m-1-r) \le \theta n  -(r+1)\frac{\theta n}{m}$ and $n-L-t \ge n  - (r+1)\frac{\theta n}{m}$. This proves \eqref{eq-stoc-domination} and hence  completes the proof. 
\end{proof}

  \section{Upper Large Deviations}

This section is devoted to establishing the following estimate. Then  Theorem~\ref{thm:upTail} becomes an immediate consequence of the proposition stated below, once $\beta$ is replaced by $\beta e$.

  \begin{proposition}\label{prop:uppLDP} 
    Fix $\beta > e$. Define $
\mathsf{J}(\beta) \coloneq  \beta \left( \ln \beta   -1  \right) $. Then  there exist  constants $c,C>0$ such that for all large  $n $,  
\begin{equation}
  \frac{c}{({\ln n})^{3/2+\beta} }   \, n^{  -\mathsf{J}(\beta) } \le    \P \left(  H_{n} \geq \beta \ln n \right) \le  \frac{C}{({\ln n})^{1/2} }    \, n^{  -\mathsf{J}(\beta) }  .
\end{equation} 
   \end{proposition}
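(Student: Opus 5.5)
The plan is a first/second moment argument on the level-$k$ population, with $k\coloneq\lceil\beta\ln n\rceil$. The starting point is the exact identity $\{H_n\ge k\}=\{X_k(n)\ge 1\}$: if some vertex lies at depth $\ge k$, then its ancestor at depth $k$ exists. Both bounds then reduce to estimates on depths of individual vertices. The depth of $v_{j+1}$ has the law of $D_j\coloneq\sum_{i=1}^{j}\xi_i$, where the $\xi_i\sim\mathrm{Ber}(1/i)$ are independent (the same decomposition used in Step 1 of the proof of Lemma~\ref{lem:kth-level-ldp}). In particular
\[
\E\, z^{D_j}=\prod_{i=1}^{j}\Bigl(1+\tfrac{z-1}{i}\Bigr)=\frac{\Gamma(j+z)}{\Gamma(z)\Gamma(j+1)}\asymp j^{z-1}.
\]

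For the upper bound, Markov's inequality gives $\P(H_n\ge k)\le \E X_k(n)=\sum_{j<n}\P(D_j=k)$. I will estimate each summand by exponential tilting with parameter $z=\beta$:
\[
\P(D_j=k)=\beta^{-k}\,\E\beta^{D_j}\;\widetilde{\P}_\beta(D_j=k).
\]
Under the tilted law $\widetilde{\P}_\beta$ the $\xi_i$ are still independent Bernoulli variables, now with parameters $\beta/(i+\beta-1)$. So $D_j$ has mean about $\beta\ln j$ and variance of order $\ln j$. A local limit theorem for sums of independent Bernoulli variables (or simply the bound $\sup_x \P(S=x)\le C/\sqrt{\Var S}$) gives $\widetilde{\P}_\beta(D_j=k)\le C(\ln n)^{-1/2}$ for $j\ge\sqrt n$; the vertices with $j<\sqrt n$ contribute negligibly by a crude Chernoff bound. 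Summing,
\[
\E X_k(n)\le \frac{C}{\sqrt{\ln n}}\,\beta^{-k}\sum_{j\le n} j^{\beta-1}\le \frac{C'}{\sqrt{\ln n}}\, n^{\beta}\beta^{-\beta\ln n}=\frac{C'}{\sqrt{\ln n}}\,n^{-\mathsf J(\beta)},
\]
where the rounding of $k$ costs only a constant. This already matches the claimed upper bound.

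For the lower bound I will use the second moment method, $\P(Y\ge1)\ge(\E Y)^2/\E Y^2$. It is applied not to $X_k(n)$ itself but to a truncated count $Y$. Let $Y$ be the number of vertices $v_{j+1}$ with $n/2\le j\le n$ at depth exactly $k$ whose ancestral line satisfies a barrier condition. Concretely, if we record the depth process $i\mapsto \sum_{l\le i}\xi_l$ along the ancestral line (the standard coupling in which $v_{j+1}$'s ancestor chain is built by the successive uniform choices), we require it to stay below $\beta\ln i+A$ for all $i\le j$. Under $\widetilde{\P}_\beta$ this process is a random walk in time $\ln i$ with drift $\beta$. The barrier event together with the endpoint condition $D_j=k$ is a ballot-type event, whose probability is of order $(\ln n)^{-3/2}$ instead of $(\ln n)^{-1/2}$. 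Hence $\E Y\ge c(\ln n)^{-3/2}n^{-\mathsf J(\beta)}$, up to a possible further polylogarithmic loss from the error terms of the comparison with a Gaussian walk. That loss is what the weaker exponent $3/2+\beta$ in the statement permits, and I expect the factor $(\ln n)^{-\beta}$ to come from comparing $\beta^{-k}$ with $\beta^{-\beta\ln n}$ after rounding, and from the range of starting times.

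For $\E Y^2$, I will decompose over pairs of counted vertices according to the label $i$ and depth $\ell$ of their last common ancestor. Given the ancestor at (label $i$, depth $\ell$), the two descending lines evolve almost independently inside the ancestor's subtree, each needing to gain about $k-\ell$ generations over time $\ln(n/i)$. The barrier forces $\ell\le\beta\ln i+A$. Each such term should therefore be bounded by $\E Y\cdot n^{-\mathsf J(\beta)}\cdot i^{\mathsf J(\beta)}\cdot e^{-c(\beta\ln i+A-\ell)}\cdot\mathrm{poly}(\ln n)$, and summing over $(i,\ell)$ should give $\E Y^2\le C\,\E Y\,(\mathrm{polylog})$ with an acceptable polylog. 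This is the main obstacle. Without the barrier, ancestors at depth $\ell\gg\beta\ln i$ would dominate, since they are individually rare but have very many descendants, and the second moment would blow up. The work is to show that the barrier exactly cancels this. I would first try it by computing everything under the $\beta$-tilted measure, where the barrier gives a factor $e^{-(\beta\ln i+A-\ell)}$ from the change of measure at the branching point. If the polylog bookkeeping becomes unmanageable, I would fall back on a cruder barrier that still yields the stated exponent $3/2+\beta$.
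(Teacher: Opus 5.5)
Your upper bound is correct and complete, and it takes a different route from the paper. The identity $\{H_n\ge k\}=\{X_k(n)\ge1\}$, the exact Poisson--binomial law of depths, the tilt by $\beta^{D_j}$ and the local bound $\widetilde{\P}_\beta(D_j=k)\lesssim(\ln n)^{-1/2}$ together give $\E X_k(n)\lesssim(\ln n)^{-1/2}n^{-\mathsf J(\beta)}$ with no floor corrections; the paper instead bounds $\pi^{(k)}(t)\le tU_1\cdots U_k$ and uses a Gamma tail. The lower bound, however, is not proved. Its decisive step, $\E Y^2\lesssim(\E Y)^2+\mathrm{polylog}(n)\,\E Y$, is only asserted, and the sketch does not close as written. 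First, nothing backs the claim that ``the two descending lines evolve almost independently inside the ancestor's subtree.'' In the forward picture that subtree has random size, its labels interleave with the rest of the tree, and your barrier is stated in absolute labels, so nothing factorizes. Second, read per label $i$, as you set up the decomposition, your term $\E Y\,n^{-\mathsf J}i^{\mathsf J}e^{-c(\beta\ln i+A-\ell)}\,\mathrm{poly}(\ln n)$ sums over $1\le i\le n$ to order $n\,\E Y\,\mathrm{poly}(\ln n)$, which is useless. The true per-label size carries an extra factor $1/i$, the probability that the second ancestral line passes through the same label $i$. Producing this coalescence factor rigorously is exactly the job of the two-point estimate. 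Once it is in place, the $i$-sum behaves like $\sum_{i\le n} i^{\mathsf J-1}$ and is dominated by $i\asymp n$, because $\mathsf J(\beta)>0$.

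The paper supplies this via Lemma~\ref{2point-estimate} (from \cite{DFF10}), working backward along ancestral paths. Its barrier is your line, indexed by generation: $\pi^{(j)}(x)\ge xe^{-j/\beta}$ for $1\le j\le k$. Condition on the path $P$ of $x$, which has at most $k+1$ vertices. The path of $y$ then uses fresh uniforms until it first hits $P$. A hit at step $\ell+1$ from a vertex $z\ge ye^{-\ell/\beta}$ has probability at most $(k+1)/z$, since $\lfloor zU_z\rfloor$ is uniform on $z$ labels. Combined with $\P(y\in G_\beta(\ell))\lesssim e^{-\ell[1/\beta+\ln\beta-1]}$, the sum over $\ell$ becomes $\sum_\ell e^{-\ell(\ln\beta-1)}<\infty$; this is where $\beta>e$ enters. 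The result is $\E\Sigma_n^2\lesssim(\E\Sigma_n)^2+(\ln n)\,\E\Sigma_n$.

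The same device is available in your notation. Consider the indicators that $v_l$ lies on the ancestral line of $v_x$, resp.\ of $v_y$, scanned downward in $l$. They are independent $\mathrm{Ber}(1/l)$ sequences (for $l$ below the respective starting vertex) up to their first common $1$, and coincide afterwards. This makes your last-common-ancestor decomposition rigorous and yields the missing factor $1/i$.

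A secondary gap is the ballot estimate in your first moment. Your label-indexed walk has non-identically distributed steps, so the cycle lemma does not apply directly; the paper gets an exact factor $1/k$ from the exchangeable increments $-\ln U_j$ indexed by generation. Finally, rounding $k$ costs only a constant factor $\beta$. The paper's polylogarithmic loss comes instead from three sources: the floor correction $\ln(1+4k)$, the ballot factor, and the factor $k+1$ in the two-point bound.
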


   The proof of Proposition \ref{prop:uppLDP}  follows the framework established
in Devroye, Fawzi, and Fraiman \cite{DFF10}, in which they established the law of large numbers
for the height of scaled attachment random recursive trees. 
Throughout this section, we identify the vertices $\{ v_1,\dots,v_n\}$ in a random recursive tree $\mathcal{T}_n$ with the set
$\{0,1,\dots,n-1 \}$. 

We start with   the following simple observation:   
Given a uniformly distributed random variable $U$ in $[0,1]$, the random variable $\lfloor k U \rfloor $ is uniformly distributed
on $\{0,\dots,k-1\}$. Thus,  
we can realize the random recursive tree $\mathcal{T}_n$ as follows:  let $(U_n)_{n \ge 0}$ be a sequence of i.i.d. copies of $U$.  At each step $n$, we introduce a node labeled $n$
and choose its parent   $\pi(n)$  such that 
\[ \pi(n) := \lfloor n U_n  \rfloor \in \{0,1,\dots,n-1\}. \]
For each  $k$, let $\pi^{(k)}$ denote the $k$-fold iteration of the parent function $\pi$. That is, for $k \ge 1$, let 
\begin{equation}
  \pi^{(k)}(n)
  = \left\lfloor \pi^{(k-1)}(n) U_{\pi^{(k-1)}(n)} \right\rfloor
  = \left\lfloor \left\lfloor \cdots \left\lfloor nU_n \right\rfloor U_{\pi(n)} \cdots \right\rfloor U_{\pi^{(k-1)}(n)} \right\rfloor .
\end{equation}  
where  $\pi^{(0)}$ is the identity map.

\begin{lemma}\label{lem:1point-estimate}
 There exists a constant  $C $ such that  for any $ k\ge 1$ and $n \ge 1$ 
  \begin{equation}
  \P( \pi^{(k)}( n) \ge n e^{-k/\beta}  ) \le  \frac{C}{\sqrt{k}} e^{-k[ \frac{1}{\beta} + \ln \beta- 1]} . 
  \end{equation}
 Furthermore, there exists a constant $c >0$ such that  for large $k$ and $n \ge \frac{1}{4} e^{k/\beta}$,
 \begin{equation}
    \P( \pi^{(k)}( n) \ge n e^{-k/\beta}  ) \ge \frac{c}{k^{\beta-1/2}} e^{ - k[ \frac{1}{\beta} + \ln \beta- 1]}  .
 \end{equation} 
\end{lemma}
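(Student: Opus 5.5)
Ignoring floors, $\pi^{(k)}(n)\approx n\prod_{i=0}^{k-1}U_{\pi^{(i)}(n)}$. The event is then essentially $\{\sum_{i<k} E_i \le k/\beta\}$ with $E_i=-\ln U_i$ i.i.d.\ Exp(1), so $\sum_{i<k}E_i\sim\mathrm{Gamma}(k,1)$. The exponent comes from the exact Gamma tail:
\[
\P(\mathrm{Gamma}(k,1)\le x)=\int_0^x \frac{y^{k-1}e^{-y}}{(k-1)!}\,\dif y .
\]
For $x=k/\beta<k$ (since $\beta>e>1$) the integrand increases on $[0,x]$. Hence the integral is at most $x\cdot x^{k-1}e^{-x}/(k-1)!=x^ke^{-x}/(k-1)!$, and at least a constant times $x^{k}e^{-x}/k!$ (by Laplace near the endpoint, the factor being $\asymp 1/(1-1/\beta)$). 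Stirling's formula then gives
\[
\frac{(k/\beta)^k e^{-k/\beta}}{k!}\asymp \frac{1}{\sqrt{k}}\,e^{-k[\frac1\beta+\ln\beta-1]},
\]
which is exactly the claimed rate.

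\emph{Upper bound.} Floors only decrease: $\lfloor aU\rfloor\le aU$. By induction, $\pi^{(k)}(n)\le n\prod_{i<k}U_{\pi^{(i)}(n)}$. The variables $U_{\pi^{(i)}(n)}$ are i.i.d.\ uniform, because the labels $\pi^{(i)}(n)$ are strictly decreasing, so each $U$ used is fresh and independent of the past. If some $\pi^{(i)}(n)=0$ the event fails anyway. Thus
\[
\P(\pi^{(k)}(n)\ge ne^{-k/\beta})\le \P\Bigl(\sum_{i<k}E_i\le k/\beta\Bigr),
\]
and the upper bound of the Gamma estimate with $x=k/\beta$ finishes this part. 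The constant $C$ is uniform in $n$ and $k$, and small $k$ is absorbed into $C$.

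\emph{Lower bound.} Here we must control the floor losses. Use $\lfloor aU\rfloor\ge aU-1=aU(1-\frac{1}{aU})$. Writing $a_i=\pi^{(i)}(n)$, we get
\[
a_k\ge n\prod_{i<k}U_i\prod_{i<k}\Bigl(1-\frac{1}{a_iU_i}\Bigr).
\]
The loss is harmless while $a_i$ stays large, but near the last step $a_i$ may be of order $1$ when $n\approx \frac14 e^{k/\beta}$. That is presumably the source of the weaker prefactor $k^{-(\beta-1/2)}$ instead of $k^{-1/2}$.

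The plan is to demand a slightly stronger event with product target $e^{-k/\beta}\cdot\lambda$ for some $\lambda>1$ chosen to compensate the floors. Alternatively, force a typical-shaped path: conditioned on $\sum E_i\le k/\beta$, the partial sums are approximately linear (a Gamma/uniform order-statistics bridge). We therefore require $a_i\gtrsim ne^{-i/\beta}\ge \frac14 e^{(k-i)/\beta}$, which makes $\sum_i 1/(a_iU_i)$ a convergent geometric-type sum, except for the last $O(1)$ steps. Shifting the threshold $x=k/\beta$ to $x-c_0$ (using $e^{-k/\beta}$ only within a constant factor, absorbed by also shifting the required level) changes the Gamma probability by at most a constant.

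The main obstacle is a crude but clean way to get a polynomial loss. My first attempt is a multiplicative slack: require $\sum_{i<k}E_i\le k/\beta-\beta\ln k$. This costs a factor
\[
e^{\beta\ln k}\cdot(1-\beta\ln k\cdot\beta/k)^k\approx k^{-\beta}\cdot O(1)
\]
relative to the main term, matching the target $k^{-1/2-\beta+1}$ up to the stated power. The slack $k^{\beta}$ in the product then dominates the total floor loss $\prod(1-1/(a_iU_i))$, once we additionally require each $U_i\ge 1/2$ and $a_i\ge 2$ along the path. Those extra requirements cost only constant factors, provided the proportion of steps with $U_i<1/2$ is handled by the ordering argument. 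I would verify this in that order: Gamma asymptotics, the upper bound, and finally the lower bound with slack $\beta\ln k$.
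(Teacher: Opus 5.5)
The main gap is in the lower bound; the upper bound also has a fixable factor-$k$ error.

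\textbf{Lower bound.} Shifting the Gamma threshold down by $s$ multiplies the density by $e^{s}(1-\beta s/k)^{k-1}\approx e^{-(\beta-1)s}$. So your slack $s=\beta\ln k$ costs $k^{-\beta(\beta-1)}$, not $k^{-\beta}$. That would give only $k^{-1/2-\beta(\beta-1)}e^{-k[\frac1\beta+\ln\beta-1]}$. This is strictly weaker than the claimed $k^{1/2-\beta}e^{-k[\frac1\beta+\ln\beta-1]}$, which corresponds to a shift of only $\ln k+O(1)$.

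Your side conditions are also not constant-cost. Given $S_k\approx k/\beta$, the increments $E_i$ are roughly exponential with mean $1/\beta$, so each step has $U_i<1/2$ with probability about $2^{-\beta}$. Imposing $U_i\ge 1/2$ at every step therefore costs roughly $(1-2^{-\beta})^k$, which changes the exponential rate. Forcing a linear barrier (your ``typical-shaped path'') costs a ballot factor $1/k$; compare Lemma~\ref{lem:1point}, which gets only $k^{-1/2-\beta}$ that way.

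The missing idea is that no path control is needed, because the floor losses are \emph{additive} and total at most $k$. From $\lfloor aU\rfloor\ge aU-1$ and $0\le U\le 1$, induction gives $\pi^{(k)}(n)\ge n\prod_{j<k}U_{\pi^{(j)}(n)}-k$. Since $n\ge\frac14 e^{k/\beta}$, we have $k\le 4k\,ne^{-k/\beta}$. Hence the event $\{\pi^{(k)}(n)\ge ne^{-k/\beta}\}$ contains $\{S_k\le w\}$ with $w=k/\beta-\ln(1+4k)$; on this event the path never hits $0$, so the uniforms are fresh. With $z=k/\beta$,
\[
\P(S_k\in[w-1,w])\ge\frac{(w-1)^{k-1}e^{-w}}{(k-1)!}=\frac{z^{k-1}e^{-z}}{(k-1)!}\,(1+4k)\Bigl(1-\frac{\beta(1+\ln(1+4k))}{k}\Bigr)^{k-1}.
\]
This is of order $k^{-1/2}\cdot k\cdot k^{-\beta}\,e^{-k[\frac1\beta+\ln\beta-1]}$, exactly the claimed bound.

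\textbf{Upper bound.} Your route matches the paper's: dominate $\pi^{(k)}(n)\le n\prod_{j<k}U_{\pi^{(j)}(n)}$ using fresh uniforms, reduce to $\P(S_k\le k/\beta)$ with $S_k\sim\mathrm{Gamma}(k,1)$, then apply Stirling. However, your Gamma estimate as written loses a factor $k$. Bounding the integral by length times maximum gives $x^ke^{-x}/(k-1)!=k\cdot x^ke^{-x}/k!\asymp\sqrt{k}\,e^{-k[\frac1\beta+\ln\beta-1]}$. The step ``which is exactly the claimed rate'' silently replaces $(k-1)!$ by $k!$.

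You need the endpoint (Laplace) bound in the upper direction as well, not only the lower. By concavity of $y\mapsto (k-1)\ln y-y$, for $0<y\le x=k/\beta$,
\[
y^{k-1}e^{-y}\le x^{k-1}e^{-x}e^{-(\beta-1-\beta/k)(x-y)}.
\]
Hence the integral is at most $x^{k-1}e^{-x}/(\beta-1-\beta/k)$, which is of the right order. This is what the paper does via the substitution $u=z-t$ and $1-u\le e^{-u}$.
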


\begin{proof}
From the definition of $\pi$ we have  the following
  inequality:
\begin{equation}
\label{eq:pi-2-rw}
n \prod_{j=0}^{k-1} U_{ \pi^{(j)} (n)} - k \le \pi^{(k)}(n) 
\le n \prod_{j=0}^{k-1} U_{ \pi^{(j)} (n)} .  
\end{equation}
Observe that the conditional distribution of 
$U_{ \pi^{(j)} (n)}$, given $(\pi^{(i)}(n))_{ 0 \le i \le j}$ 
with $\pi^{(j)}(n) \ge 1$, is still 
the uniform distribution on $[0,1]$. 
Combining this with \eqref{eq:pi-2-rw}, we obtain:
\begin{equation}
\label{eq:pi-k-U-1}
\P ( n U_1 \cdots U_{k} - k \ge n e^{-k/\beta} ) 
\le \P \bigl(\pi^{(k)}(n) \ge n e^{-k/\beta}\bigr) 
\le \P ( n U_1 \cdots U_{k} \ge n e^{-k/\beta} ) . 
\end{equation}  

Let $S_{k}:= - \sum_{j=1}^k\ln U_j$. Note that $\ln (1/U)$ has the standard 
exponential distribution, thus $(S_{k})_{k \ge 1}$ is a 
random walk with standard exponential jump distribution, and $S_k$ follows a Gamma distribution 
with shape parameter $k$ and rate parameter $1$. Then \eqref{eq:pi-k-U-1} is equivalent to: 
\begin{equation}
\label{eq:pi-k-U-15}
\P \Big( S_k \le \frac{k}{\beta} - \ln\big(1+ \frac{k}{n} e^{k/\beta} \big) \Big) 
\le \P \bigl(\pi^{(k)}(n) \ge n e^{-k/\beta}\bigr) 
\le \P ( S_k \le k/\beta ) . 
\end{equation}

For the upper bound, let $z = k/\beta$.
By substituting $u = z - t$, we find: 
\begin{align}
\P ( S_k \le z ) 
&= \int_0^z \frac{u^{k-1}}{(k-1)!} e^{-u} \dif u 
=\int_0^z \frac{(z-t)^{k-1}}{(k-1)!} e^{-(z-t)} \dif t  \\
&= \frac{z^{k-1}}{(k-1)!} e^{-z} \int_0^z (1 - \frac{t}{z})^{k-1} e^t \dif t \lesssim \frac{z^{k-1}}{(k-1)!} e^{-z} .
\end{align}
In the last inequality, we have used the inequality $1 - u \le e^{-u}$,  and obtained 
$\int_0^z (1 - \frac{t}{z})^{k-1} e^t \dif t  
\le \int_0^\infty e^{-t (\frac{k-1}{k}\beta - 1)} \dif t < \infty  $ since $\beta > e$.  Using Stirling's approximation   
$(k-1)! = \Theta( k^{-1/2} (k/e)^k )$, 
and plugging  $z = k/\beta$, we obtain:
\begin{equation}
  \label{eq:gamma-density-k-beta}
\P ( S_k \le k/\beta ) 
\lesssim  \frac{z^{k-1}}{(k-1)!} e^{-z} = \Theta(  \frac{1}{\sqrt{k}} e^{-k[ \frac{1}{\beta} + \ln \beta - 1] } ).
\end{equation}

For the lower bound, let 
$w = k/\beta - \ln(1+4k)$. 
By assumption, we have
$\ln ( 1+ e^{k/\beta} k /n) \le \ln(1+4k)$. Thus,
\begin{align}
\P \bigl(\pi^{(k)}(n) \ge n e^{-k/\beta}\bigr) 
&\ge \P ( S_k \in [w-1, w] )  \\
&
= \int_{w-1}^w \frac{t^{k-1}}{(k-1)!} e^{-t} \dif t \ge \frac{(w-1)^{k-1}}{(k-1)!} e^{-w} .
\end{align}
Recall that $z = k/\beta$.
We expand the lower bound term:
\begin{equation}
   \frac{(w-1)^{k-1}}{(k-1)!} e^{-w} 
= \frac{z^{k-1}}{(k-1)!} e^{-z} 
\Big(1 - \frac{1+\ln(1+4k)}{k/\beta}\Big)^{k-1} (1+ 4k).
\end{equation}
For large $k$, the term
$\big(1 - \frac{1+\ln(1+4k)}{k/\beta}\big)^{k-1} $ is of order $\Theta( k^{-\beta} )$. Finally, combining these polynomial terms yields:
\begin{equation}
\P \bigl(\pi^{(k)}(n) \ge n e^{-k/\beta}\bigr) 
\gtrsim  k^{\frac{1}{2} - \beta} e^{-k[ \frac{1}{\beta} + \ln \beta - 1]} .
\end{equation}
This completes the proof.
\end{proof}

 \begin{proof}[Proof of the upper bound in Proposition \ref{prop:uppLDP} ] 
 Observe that, from the definition of the mapping $\pi$, it follows that $  \mathrm{dist}(0,t)=\min\{ k \ge 0: \pi^{(k)}(t) = 0\}$. Hence we have:
\begin{equation} 
  \label{eq:dist-to-pi}
  \{ \mathrm{dist}(0,t) \ge k  \} = \{  \pi^{(k)}(t)  >0 \}  = \{  \pi^{(k)}(t)  \ge 1 \} .  
\end{equation}
 Let $b_n := \lceil  \beta \ln n \rceil $. By applying the union bound and using the fact  $ t e^{-b_n / \beta}< 1$ for all $1 \le t <n$, we get 
  \begin{equation}
        \P \left(  H_{n} \geq \beta \ln n \right) \le \sum_{t=1}^{n-1} \P  \bigl( \pi^{(b_n)}(t) \ge 1  \bigr) \le \sum_{t=1}^{n-1} \P  \Bigl(  \pi^{(b_n)}(t) \ge t e^{-b_n / \beta} \Bigr) .
  \end{equation}
Lemma \ref{lem:1point-estimate} then implies:
    \[ \P \left(  H_{n} \geq \beta \ln n \right) \lesssim n \, \frac{1}{\sqrt{\ln n}} e^{- \beta \ln n [ \frac{1}{\beta} + \ln \beta- 1]  }=  \frac{1}{\sqrt{\ln n}} n^{- \mathsf{J}(\beta) } ,  \]
    as desired. This completes the proof.
 \end{proof}

 The proof of the lower bound relies on the second moment method. We define a set of "good" vertices as follows: let 
\[ G_{\beta}(k) = \bigl\{  x \ge 1  :  \pi^{(j)}(x) \ge x e^{- j/\beta  } \text{ for all } 1 \le j \le k  \bigr\} . \]

\begin{lemma}
  \label{lem:1point}
There exists a constant $c>0$ such that for all large $k$ and $x \ge \frac{1}{4} e^{k/\beta}$,
  \begin{equation}
     \P( x \in G_{\beta}(k) )  \ge \frac{c}{k^{1/2+\beta}} e^{-k[ \frac{1}{\beta} + \ln \beta- 1]} .   
  \end{equation}
\end{lemma}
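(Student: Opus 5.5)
The idea is to reduce membership in $G_{\beta}(k)$ to a barrier event for the exponential random walk $S_j=-\sum_{i\le j}\ln U_i$ used in the proof of Lemma~\ref{lem:1point-estimate}. We then estimate that barrier event by conditioning on the endpoint $S_k$ and applying a ballot (cyclic) lemma; this is where the extra factor $k^{-1}$ in the statement, compared with Lemma~\ref{lem:1point-estimate}, comes from.

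\emph{Step 1: reduction to a barrier event.} As in \eqref{eq:pi-2-rw}, we have $\pi^{(j)}(x)\ge xU_1\cdots U_j-j$, where the $U_i$ are i.i.d.\ uniform after the same relabelling of the uniforms along the ancestral line. So $\pi^{(j)}(x)\ge xe^{-j/\beta}$ holds as soon as
\[
S_j\le \frac{j}{\beta}-\ln\Bigl(1+\frac{j}{x}e^{j/\beta}\Bigr).
\]
Using $x\ge \frac14 e^{k/\beta}$, the right-hand side is at least
\[
f(j)\coloneq \frac{j}{\beta}-\ln\bigl(1+4je^{-(k-j)/\beta}\bigr).
\]
Hence $\P(x\in G_\beta(k))\ge \P(S_j\le f(j)\ \forall\, 1\le j\le k)$.

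\emph{Step 2: a linear barrier below $f$.} Set $L\coloneq \ln(1+4k)+1$ and $w\coloneq k/\beta-L$. I claim that for $k$ large, every $s\le w$ and every $1\le j\le k$,
\[
\frac{j}{k}\,s\;\le\;\frac{j}{\beta}-\frac{j}{k}L\;\le\; f(j).
\]
The first inequality is immediate. The second amounts to $\frac{j}{k}L\ge \ln(1+4je^{-(k-j)/\beta})$. Write $j=k-m$. The right side is at most $\ln(1+4k)$ when $m$ is small, and at most $\ln(1+4ke^{-m/\beta})\le \max\{0,\ln(4k)-m/\beta\}+\ln 2$ in general. The left side decreases linearly from $L$ at $m=0$ with slope $L/k\ll 1/\beta$, so it dominates. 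The extra $+1$ in $L$ absorbs the $\ln 2$ and $\ln(1+\cdot)$ slack near $m=0$, and for $m\ge \beta\ln(8k)$ the right side is $O(k^{-1})$, which is below $\frac{j}{k}L$ as long as $j\ge 1$. I expect checking this elementary inequality uniformly in $j$ to be the fiddly part, though not a conceptual obstacle. It follows that
\[
\P(x\in G_\beta(k))\ge \P\Bigl(S_k\in[w-1,w],\ S_j\le \tfrac{j}{k}S_k\ \forall\, j\le k\Bigr).
\]

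\emph{Step 3: ballot lemma and endpoint estimate.} Conditionally on $S_k=s$, the increments $(-\ln U_i)_{i\le k}$ are exchangeable with a continuous joint law. By the cyclic lemma (Takács' ballot theorem), among the $k$ cyclic rotations of the increments, almost surely exactly one gives a path staying weakly below the chord $j\mapsto js/k$. Hence the conditional probability of the chord event equals $1/k$, independently of $s$. Therefore
\[
\P(x\in G_\beta(k))\ge \frac1k\,\P\bigl(S_k\in[w-1,w]\bigr)\ge \frac1k\,\frac{(w-1)^{k-1}}{(k-1)!}e^{-w}.
\]
Exactly as in the lower-bound part of the proof of Lemma~\ref{lem:1point-estimate}, with $L$ in place of $\ln(1+4k)$ (which only changes constants), this is
\[
\gtrsim \frac1k\,k^{1/2-\beta}e^{-k[\frac1\beta+\ln\beta-1]}=\frac{c}{k^{1/2+\beta}}e^{-k[\frac1\beta+\ln\beta-1]}.
\]
This is the claimed bound.
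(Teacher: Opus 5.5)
Your proposal is correct and is essentially the paper's proof: the same reduction to the barrier $S_j\le g_k(j)$, a chord lying below that barrier, Feller's cycle lemma for the factor $1/k$, and the Gamma endpoint bound from Lemma~\ref{lem:1point-estimate}. The only difference is cosmetic. The paper takes the chord to $g_k(k)$ itself and justifies it by showing that $t\mapsto \frac1t\ln\bigl(1+4te^{-(k-t)/\beta}\bigr)$ is increasing, whereas you lower the chord by $1$ and check it by cases; in that case check, for $m\ge\beta\ln(8k)$ the right side is bounded by $4je^{-m/\beta}\le j/(2k)$ rather than by $O(k^{-1})$, but this still lies below $\frac{j}{k}L$, so the argument goes through.
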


  \begin{proof}[Proof of Lemma \ref{lem:1point}]
Following the argument in Lemma \ref{lem:1point-estimate}, with
$S_j:=-\sum_{i=1}^{j}\ln U_i$, we have
\begin{align}
 \P(x\in G_{\beta}(k))
 &\ge \P\Big(S_j\le \frac{j}{\beta}
 -\ln\big(1+\frac{j}{x}e^{j/\beta}\big)
 \text{ for all }1\le j\le k\Big).
\end{align}
Since $x\ge \frac14 e^{k/\beta}$, the right-hand side is bounded from below by
\begin{equation}\label{eq:good-path-barrier}
 \P\left(S_j\le g_k(j)\text{ for all }1\le j\le k\right) \ , \ 
 \text{ where } \
 g_k(t)\coloneq \frac{t}{\beta}
 -\ln\big(1+4t e^{-(k-t)/\beta}\big).
\end{equation}

We claim that, for all sufficiently large $k$,
\begin{equation}\label{eq:good-path-linear-barrier}
 g_k(t)\ge \frac{t}{k}g_k(k)\quad\text{for all }1\le t\le k.
\end{equation} 
Let $w_k\coloneq g_k(k)=k/\beta-\ln(1+4k)$. Using  
\eqref{eq:good-path-linear-barrier}, we have   
\begin{equation} \label{eq-barrier}
    \Bigl\{ S_k\in[w_k-1,w_k],\
   S_j\le\frac{j}{k}S_k \,,\ \forall \ 1\le j\le k    \Bigr\} \subset \bigl\{ S_j\le g_k(j) \,,\ \forall \ 1\le j\le k \bigr\} .
\end{equation}
Feller's cycle lemma argument (see e.g. \cite[Lemma 5]{DR95}) implies 
\begin{equation}
  \label{eq:cycle-lemma}
  \P  \Bigl(  
 S_j\le\frac{j}{k}S_k \,,\ \forall \ 1\le j\le k  \, \Big| \,  S_k = s  \Bigr) = \frac{1}{k}. 
\end{equation} 
Now, combining \eqref{eq-barrier} with \eqref{eq:cycle-lemma}, we get 
\begin{align}
 \P(x\in G_{\beta}(k)) &\ge \P \Bigl(  S_k\in[w_k-1,w_k],\
 S_j\le\frac{j}{k}S_k \,,\ \forall \ 1\le j\le k    \Bigr) \\
 &\ge \frac{1}{k}\P\left(S_k\in[w_k-1,w_k]\right) \ge c k^{-1/2-\beta}
 e^{-k[\frac{1}{\beta}+\ln\beta-1]}, \label{eq:good-path-ballot}
\end{align} 
as required. 

It remains to prove  
\eqref{eq:good-path-linear-barrier}.
Indeed, set
\[
 q_k(t)\coloneq \frac{1}{t}\ln\big(1+4t e^{-(k-t)/\beta}\big).  
\]
It suffices to show $q_k$ is increasing on $[1,k]$, i.e., $q'_k(t)  \ge 0$ on $[1,k]$. 
Writing $y=y_k(t)=4t e^{-(k-t)/\beta}$,  we have 
$ t^2 q_k'(t)
  =\frac{y(1+t/\beta)}{1+y}-\ln(1+y) $.  
For all sufficiently large $k$ and all $t\in[1,k]$, we have $\ln(1+y)\le t/\beta$. Indeed, this inequality is equivalent to 
\begin{equation}\label{eq:good-path-elementary-bound}
 e^{-t/\beta}+4t e^{-k/\beta}\le 1.
\end{equation}
The left-hand side is convex in $t$, and the
inequality holds at both endpoints for all sufficiently large $k$.  Consequently,
\begin{align}
 t^2 q_k'(t) 
  \ge \frac{y(1+\ln(1+y))}{1+y}-\ln(1+y)
 =\frac{y-\ln(1+y)}{1+y}\ge0.
\end{align}
This completes the proof.
   \end{proof}

We shall also use the following lemma, which is essentially 
{\cite[Lemma 4]{DFF10}}. For the reader's convenience, we recall its proof 
in the appendix, adapted to the notation of the present paper.

\begin{lemma}[{\cite[Lemma 4]{DFF10}}]\label{2point-estimate} Let $x<y$ be two    positive integers. Then  
  \begin{equation}
    \P( x, y \in   G_{\beta}(k)    ) \le  \sum_{\ell=0}^{k-1} \P(x \in G_{\beta}(k) )\P(y \in G_{\beta}(\ell) ) \frac{(k+1)}{y e^{- \ell/\beta}} + \P( x \in G_{\beta}(k)) \P( y  \in G_{\beta}(k)  ).
  \end{equation} 
\end{lemma}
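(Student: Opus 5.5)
The plan is to decompose according to whether, and when, the ancestral line of $y$ first runs into the ancestral line of $x$. Then factorize the probability by summing over explicit ancestral paths, using that the parent choices $\pi(z)=\lfloor zU_z\rfloor$ at distinct vertices $z$ are independent and uniform on $\{0,\dots,z-1\}$.

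\textbf{Path-sum formulation.} Write $a_i\coloneq\pi^{(i)}(x)$ and $b_j\coloneq\pi^{(j)}(y)$ for $0\le i,j\le k$, and let $\mathcal{A}\coloneq\{a_0,\dots,a_k\}$. For a fixed deterministic decreasing sequence $(z_0,\dots,z_j)$ we have
\[
\P\bigl(\pi^{(i)}(z_0)=z_i,\ 1\le i\le j\bigr)=\prod_{i=0}^{j-1}\frac{1}{z_i}.
\]
More generally, for two sequences whose transitions are read off at disjoint sets of vertices, the joint probability is the product of the two such products. This holds because each vertex $z$ has a single independent uniform $U_z$, and the event only constrains $\pi$ at the vertices from which a step is taken. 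Both $\{x\in G_\beta(k)\}$ and $\{y\in G_\beta(\ell)\}$ are unions over admissible paths of such cylinder events.

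\textbf{Decomposition.} Let $L$ be the smallest $j\in\{0,\dots,k-1\}$ with $b_{j+1}\in\mathcal{A}$, and set $L=\infty$ if no such $j$ exists. Since $x<y$ and ancestors have smaller labels, $y=b_0\notin\mathcal{A}$. More generally, $b_0,\dots,b_L\notin\mathcal{A}$, so the steps of $y$'s path up to time $L$ are taken from vertices outside $x$'s path.

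\emph{Case $L=\infty$.} Here $b_0,\dots,b_k$ are all outside $\mathcal{A}$. Sum over pairs of admissible paths $(a_i)_{i\le k}$ (witnessing $x\in G_\beta(k)$) and $(b_j)_{j\le k}$ (witnessing $y\in G_\beta(k)$) with $\{b_0,\dots,b_{k-1}\}\cap\{a_0,\dots,a_{k-1}\}=\emptyset$. The steps are taken at disjoint vertex sets, so each summand factorizes. Dropping the disjointness constraint only enlarges the sum, which gives at most $\P(x\in G_\beta(k))\P(y\in G_\beta(k))$.

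\emph{Case $L=\ell<k$.} On this event we have $x\in G_\beta(k)$, $y\in G_\beta(\ell)$ (we discard the remaining constraints on $y$), $b_0,\dots,b_\ell\notin\mathcal{A}$, and $\pi(b_\ell)\in\mathcal{A}$. Sum over $x$-paths $a$ and $y$-prefixes $(b_0,\dots,b_\ell)$ disjoint from $\mathcal{A}$. The step at $b_\ell$ is again from a vertex not on $a$'s path and not among $b_0,\dots,b_{\ell-1}$. Conditional on the rest, it lands in the fixed set $\mathcal{A}$, of size at most $k+1$, with probability at most $(k+1)/b_\ell$. Admissibility of the prefix for $G_\beta(\ell)$ gives $b_\ell\ge ye^{-\ell/\beta}$. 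After factorizing and dropping disjointness, this term is bounded by
\[
\P(x\in G_\beta(k))\,\P(y\in G_\beta(\ell))\,\frac{k+1}{ye^{-\ell/\beta}}.
\]
For $\ell=0$ we use the convention that $G_\beta(0)$ is all $x\ge 1$, and the bound $b_0=y$ is exact.

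Summing the case $L=\infty$ and the cases $L=\ell$ over $0\le\ell\le k-1$ yields the lemma.

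The main obstacle is the bookkeeping that makes the factorization legitimate. The $y$-path is dependent on the $x$-path only through coincidences of vertices, and the first-meeting decomposition is exactly what guarantees that every step counted for $y$ is taken at a vertex whose uniform is not used by $x$'s path. I would carry this out by writing each probability as the explicit product $\prod 1/z_i$ over paths, rather than arguing with conditional independence informally.
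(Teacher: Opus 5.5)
Your proposal is correct and follows essentially the same route as the paper's appendix proof. Both decompose on the first step $\tau$ at which $y$'s ancestral line hits $\Pi_k(x)$. Both bound the probability that the collision step taken from $\pi^{(\ell)}(y)\ge ye^{-\ell/\beta}$ lands on the path by $(k+1)/(ye^{-\ell/\beta})$. Both then factorize using the independence of the uniforms at distinct vertices, and treat the case $\tau=\infty$ by the same factorization.

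Your explicit path-product bookkeeping is a more careful version of the paper's conditioning on $\{\Pi_k(x)=P\}$. Your inclusion of the $\ell=0$ term matches the statement; the paper's final display writes that sum starting from $\ell=1$.
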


Now, we are ready to complete the proof of Proposition \ref{prop:uppLDP}.

 \begin{proof}[Proof of the lower bound in Proposition \ref{prop:uppLDP}]
Recall that $b_n := \lceil \beta \ln n \rceil$.
Observe that 
if $x \in G_{\beta}(b_n)$,
then we must have
$\pi^{(b_n)}(x) \ge \frac{n}{2} e^{-b_n / \beta} > 0$.
By \eqref{eq:dist-to-pi}, this condition implies
that $\mathrm{dist}(0,x) \ge b_n$.
Now, let us define
\[ \Sigma_n := |\{ n/2 \le x \le n-1 : x \in G_{\beta}(b_n)\}|. \]
Applying the Cauchy--Schwarz inequality,
we obtain:
\begin{equation}\label{eq:CS-ineq}
\P(H_n \ge \beta \ln n) \ge \P(\Sigma_n \ge 1)
\ge \frac{(\E[\Sigma_n])^2}{\E[\Sigma_n^2]}.
\end{equation}
First, from Lemma \ref{lem:1point}, we have:
\begin{equation}\label{1st-mt-low}
\E[\Sigma_n] \ge  \sum_{n/2 \le x < n}
\P (  x \in G_{\beta}(b_n))
\gtrsim \frac{1}{(\ln n)^{1/2+\beta}} n^{-\mathsf{J}(\beta)}.
\end{equation}
Next, applying  
Lemma \ref{2point-estimate} yields  
\begin{equation}\label{2nd-mt-1}
\E[\Sigma_n^2] \le \E[\Sigma_n] + (\E[\Sigma_n])^2
+ \sum_{n/2 \le y,x < n} \sum_{\ell=1}^{b_n-1}
\P(x \in G_{\beta}(b_n)) \P(y \in G_{\beta}(\ell))
\frac{b_n+1}{y e^{-\ell/\beta}}.
\end{equation}
Furthermore, for all $n/2 \le y < n$,
Lemma \ref{lem:1point-estimate} provides  
\[ \P(y \in G_{\beta}(\ell)) \le
\P(\pi^{(\ell)}(y) \ge y e^{-\ell/\beta})
\le e^{-\ell[1/\beta+\ln \beta - 1   ]}. \]
Substituting this into the upper bound
for $\E[\Sigma_n^2]$, and noting that
$\sum_{\ell \ge 1} e^{-\ell[\ln \beta - 1]}$
is summable, we conclude 
\begin{equation}\label{2nd-mt-2}
\E[\Sigma_n^2] \lesssim (\E[\Sigma_n])^2
+ (b_n + 1) \sum_{n/2 \le x < n}
\P(x \in G_{\beta}(b_n))
\lesssim   (\ln n) \E[\Sigma_n].
\end{equation}
Combining \eqref{eq:CS-ineq}, \eqref{1st-mt-low},
\eqref{2nd-mt-1}, and \eqref{2nd-mt-2}
gives the desired lower bound.
\end{proof}

\appendix 

\section{Proof of Lemma \ref{2point-estimate}}

  \begin{proof}[Proof of Lemma \ref{2point-estimate}] 
   Let $\Pi_{k}(x)= ( \pi^{(j)}(x) )_{j=0}^{k}$ denote the unique path of length $k$   from node $x$ to the root $0$. 
  Let  $\tau+ 1$ denote the collision step, at which the path starting at $y$  first intersects  the path from $x$. Formally, we define: 
  \[  \tau \coloneq \min\{ 0 \le \ell  \le k-1: \pi^{(\ell+1)}(y) \in \Pi_{k}(x) \}  \ \text{ if } \ \Pi_{k}(x)  \cap \Pi_{k}(y) \neq \emptyset , \]
and $\tau \coloneq \infty$ otherwise.  We write $
P \in G_{\beta}(k)$ if $P$ is a possible realization of $\Pi_{k}(x)$ that guarantees $x \in G_{\beta}(k)$. Then 
for any $0 \le \ell \le k-1  $ we have 
\begin{align}
   & \P  (\tau = \ell,  x, y \in   G_{\beta}(k) )  = \sum_{P \in G_{\beta}(k)} 
   \P ( \Pi_k(x) = P )  \P( \tau = \ell, y \in   G_{\beta}(k)  \mid \Pi_k(x) = P ) \\
   & \le \sum_{P \in G_{\beta}(k)} 
   \P ( \Pi_k(x) = P )  \sum_{ \substack{ z \notin P \\ z \ge y e^{-\ell/\beta} }}  \P \bigg( \begin{array}{l}
   \Pi_{\ell}(y) \in G_{\beta}(\ell),
      \Pi_{\ell}(y) \cap P = \emptyset \\
      \pi^{(\ell)}(y)=z ,  \lfloor z U_{z} \rfloor \in P 
   \end{array} \mid \Pi_k(x) = P \bigg)
\end{align}
 Observe that by our construction, conditioned on the event  $\{\Pi_k(x) = P\}$ happen or not,  the probability of the event $\{\Pi_{\ell}(y) \cap P = \emptyset, \pi^{(\ell)}(y)=z\}$ are the same. Note that $U_z$ is independent of $\{\Pi_{\ell}(y) \cap P = \emptyset \} $. In summation we obtain 
\begin{align}
 & \P (
        \Pi_{\ell}(y) \cap P = \emptyset, \pi^{(\ell)}(y)=z ,  \lfloor z U_{z} \rfloor \in P \mid \Pi_k(x) = P ) \\
        &=  \P (
        \Pi_{\ell}(y) \cap P = \emptyset, \pi^{(\ell)}(y)=z ,  \lfloor z U_{z} \rfloor \in P  )\\
        & \le \P (
         \pi^{(\ell)}(y)=z ) \,  \P(\lfloor z U_{z} \rfloor \in P  ) .
\end{align}
Furthermore, since $z \ge y e^{-\ell/\beta}$, and $|P| \le k+1$ we have  
\[ \P(\lfloor z U_{z} \rfloor \in P  ) \le   \frac{ k+1}{ y e^{-\ell/\beta}}  .\]
All together, summing over $\ell$ yields the bound for
non-empty intersections: 
\begin{align}
&  \P  (  x, y \in   G_{\beta}(k) ,  \Pi_k (y) \cap \Pi_k (x) \neq \emptyset ) \\
  & \le  \sum_{\ell=0}^{k-1} \sum_{P \in G_{\beta}(k)}  \P ( \Pi_k(x) = P )     \frac{ k+1}{ y e^{-\ell/\beta}} 
  \sum_{   z \notin P , z \ge y e^{-\ell/\beta} } \P (  
   \Pi_{\ell}(y) \in G_{\beta}(\ell) ,
      \pi^{(\ell)}(y)=z    
 )   \\
  & = \sum_{\ell=1}^{k-1} \P(x \in G_{\beta}(k) )\P(y \in G_{\beta}(\ell) ) \frac{(k+1)}{y e^{- \ell/\beta}}  .
\end{align}

Finally, for the case $\tau=\infty$, repeating the above argument one get 
\begin{align}
  \P  (  x, y \in   G_{\beta}(k) ,  \Pi_k (y) \cap \Pi_k (x) =\emptyset ) \le \P( x \in G_{\beta}(k)) \P( y  \in G_{\beta}(k)  ).
\end{align}
Combining these cases completes the proof.   \end{proof}

  %  \heng{Upper bound is a simple union bound, and the lower bound shoule be also easy because we have the BRW embeding ?? Otherwise we have a second moment argument}

  % \section{Spine decomposition}

%%%%%%%%%%%%%%%%%%%%%%%%%%%%%%%%%%%%%%%%%%%%%%%%%%%%%%%%%%%%%%%%%%%%%%%%%%%%%%%%%%
%% Acknowledgement %%%%%%%%%%%%%%%%%%%%%
%%%%%%%%%%%%%%%%%%%%%%%%%%%%%%%%%%

\section*{Acknowledgement}

The authors express their gratitude to Prof. Jian Song for his generous hospitality and
support during their research visit at the Research Center for Mathematics and Interdisciplinary Sciences, Shandong
University in the Summer of 2025, during which the main idea of this work was conceived.
X.C. is supported by National Key R\&D Program of
China (No. 2022YFA1006500). H.M. is supported in part from a Lady
Davis Fellowship at the Technion.

%%%%%%%%%%%%%%%%%%%%%%%%%%%%%%%%%%
%% REFERENCE %%%%%%%%%%%%%%%%%%%%%
%%%%%%%%%%%%%%%%%%%%%%%%%%%%%%%%%%
 
 \bibliographystyle{alpha}
 \bibliography{biblio}

 \end{document}
 
%%% Garbage to be deleted

\begin{TBD}
  Notice that the sizes  $ (|\T^{(j)}_{m,n}|: 1 \leq j \leq m)$ has the same distribution for any $j$, 
  by inclusion-exclusion formula, 
  \begin{equation}
    \P \left( \max_{1 \leq j \leq m}| \T_{m,n}^{(j)} | <  \ell  \frac{n}{m}  \right) = \sum_{r=0}^{m/\ell} (-1)^{r} \binom{m}{r}  \P( |\T^{(j)}_{m,n}|  \geq \ell  \frac{n}{m}   , \forall 1 \leq j \leq r   ) . 
  \end{equation}  
  By the stars and bars argument, we obtain that for any positive integers  $k$ and $r$ such that $r(k-1)+ m \leq n$,  
  \begin{align}
   & \P( |\T^{(j)}_{m,n}|  \geq k  , \forall 1 \leq j \leq r   )  \\
    & =  \frac{ \binom{n-r(k-1)-1 }{m-1} }{\binom{n-1}{m-1}}  = \frac{(n-r(k-1)-1)(n-r(k-1)-2) \cdots (n-r(k-1)-m+1) }{(n-1)(n-2)\cdots (n-m+1)} \\
    &=  \prod_{j=1}^{m-1} \frac{n-j-r(k-1)}{n-j} = \prod_{j=1}^{m-1} \left( 1-  \frac{r(k-1)}{n-j} \right) 
  \end{align}
    Applying the inequality $1-x \leq e^{-x}$ yields that  
  \begin{align}
    \P( \T^{(j)}_{m,n}  \geq k  , \forall 1 \leq j \leq r   ) &\leq \exp \left( - (k-1)\sum_{j=1}^{m-1}    \frac{r}{n-j} \right) \\
    &  \leq  \exp \left( - r\frac{(m-1) (k-1)}{n} \right), 
  \end{align}
  where we have used that $ - \sum_{j=1}^{m-1} \frac{1}{n-j}  \le  -\int_{n-m+1}^{n} \frac{dx}{x} =  \ln \frac{n-m+1}{n} \leq - \frac{(m-1)}{n}$. 
  
  The union bound yields that 
  \begin{equation}
    \P \left( \max_{1 \leq j \leq m}| \T_{m,n}^{(j)} | >  \ell  \frac{n}{m}  \right) \leq m e^{-\ell}
  \end{equation}
  \end{TBD}

 \begin{TBD}\textbf{The previous weak bound.}
We claim that for $\ell \geq 1$,  
\begin{equation}\label{eq-sizes-concentration}
  \P \left(     L_{m,n} \leq \frac{m}{4 \ell}   \right)   \leq   m e^{- \ell + \frac{\ell}{m} +1}.
\end{equation} 
We take $1>\lambda> \alpha/e$, $2 m=n^{1-\lambda}$ and $2 \ell = \sqrt{m} $. Then   $\frac{n}{2m}=n^{\lambda} >n^{\alpha}$ and hence by \eqref{eq-ABF-bound},  $\mathsf{F}(n^{\lambda},\alpha \ln n) \leq  c_{1} n^{-c_{2} (\lambda e- \alpha)}$. Thus 
\begin{align}
  \P \left(  H_{n} \leq \alpha \ln n \right) & \leq     F\left(\frac{1}{2} \frac{n}{m} , \alpha \ln n \right)^{ \frac{m}{4\ell} }      + \P \left(     L_{m,n} \leq \frac{m}{4 \ell}   \right)  \\
  & \leq   \left( c_{1} n^{-c_{2} (\lambda e- \alpha)} \right)^{ \ell }  +  m  \exp\left(   - \ell  + o_{n}(1)  \right)  \\
  & \lesssim  m e^{-  \frac{1}{2\sqrt{2}} n^{\frac{1}{2}(1-\lambda)} }
\end{align}
In conclusion we can take $c(\alpha) = \frac{1}{2} (1-\frac{\alpha}{e}) - \epsilon$ for any small $\epsilon>0$.

Now it remains to prove \eqref{eq-sizes-concentration}. 
We  show that for any $m$-tuple $(n_{j})_{j=1}^{m} \in \mathfrak{C}_{n,m}$ such that $\max_{1 \leq j \leq m} n_{j}\leq \ell \frac{n}{m}$, 
 \begin{equation}
   |\{ 1 \leq j \leq m:  n_{j} \geq \frac{1}{2} \frac{n}{m} \}| \geq \frac{m}{4 \ell}.
 \end{equation}  
(The worst case is there are roughly $ m/\ell$ many $n_{j}$ has the value $ \ell \frac{n}{m}-1$ and others $n_{j}$'s are just $1$). This implies that  
 \begin{equation}
  \left\{  |  J_{m,n}| \leq \frac{m}{4 \ell}    \right\}  \subset \left\{\max_{1 \leq j \leq m}| \T_{m,n}^{(j)} | >  \ell  \frac{n}{m} \right\} 
 \end{equation}
 and \eqref{eq-sizes-concentration} follows from \eqref{eq-sizes-bound}.  
 Indeed let $J$ be uniform distributed in $\{1,\dots,m\}$. Then we have $\E[ n_{J}]= \frac{1}{m} \sum_{j=1}^{m} n_{j} = \frac{n}{m}$. We can further upper bound the second moment by $n_{J}$ as follows:
 \begin{equation}
   \E[ n_{J}^2] = \frac{1}{m} \sum_{j=1}^{m} n_{j}^2 \leq (\max_{1 \leq j \leq m} n_{j}) \frac{n}{m} \leq \ell (\frac{n}{m})^2. 
 \end{equation}
 Then applying the second moment method, we obtain 
 \begin{align}
  \frac{  |\{ 1 \leq j \leq m:  n_{j} \geq \frac{1}{2} \frac{n}{m} \}|}{m} =   \P\left( n_{J} > \frac{1}{2}  \E[n_{J}] \right)  \geq \frac{1}{4} \frac{ (\E[n_{J}] )^2 }{\E[n_{J}^2]} \geq \frac{1}{4 \ell}
 \end{align}
 as desired. 
\end{TBD}

 \begin{TBD}

\begin{remark}
  Maybe we should look at a better concentration inequality for the quantity
  \begin{equation}
   |\L_{n,m}|\coloneq  | \{ 1 \leq j \leq m: | \mathcal{T}^{(j)}_{m,n}| \geq \frac{1}{2} \frac{n}{m} \}|.
  \end{equation}
  % or the concentration inequality for the quantity  for some $c<1$ 
  % \begin{equation}
  %   |1 \leq j \leq m: | \mathcal{T}^{(j)}_{m,n}| \geq \left(  \frac{n}{m} \right)^{c} |. 
  % \end{equation}
Since $ \E[ |\L_{n,m}|] = \Theta(m), \mathrm{Var}(|\L_{n,m}|)=\Theta(m) $. We could except that,   by using Azuma-Hoeffding inequality, one can show 
  \begin{equation}
   \P( | |\L_{n,m}| - \E[ |\L_{n,m}| ] | > t ) \leq e^{-   \frac{t^2}{2c m}} 
  \end{equation}
  Then this implies that 
  \begin{equation}
    \P(   |\L_{n,m}| \leq c m  )  \leq e^{- c' m},
  \end{equation}
 Take $m=n^{1-\lambda}$ with $\lambda = \alpha/e + \epsilon $ this gives us 
 \begin{align}
  \P \left(  H_{n} \leq \alpha \ln n \right) & \leq     F\left( n^{\lambda} , \alpha \ln n \right)^{ c m }      + \P \left(     L_{m,n} \leq   c m  \right) \\
  & \leq   e^{-c'' m} + e^{-c'm} \lesssim e^{- n ^{1-\alpha/e - \epsilon }} .
 \end{align}
 This may match the lower bound.
\end{remark}

\end{TBD}